\documentclass[a4paper,fleqn]{cas-sc}

\usepackage[authoryear,longnamesfirst]{natbib}
\usepackage{graphicx}%
\usepackage{multirow}%
\usepackage{amsmath,amssymb,amsfonts}%
\usepackage{amsthm}%
\usepackage{mathrsfs}%
\usepackage[title]{appendix}%
\usepackage{xcolor}%
\usepackage{textcomp}%
\usepackage{manyfoot}%
\usepackage{booktabs}%
\usepackage{algorithm}%
\usepackage{algorithmicx}%
\usepackage{algpseudocode}%
\usepackage{listings}%
\usepackage{lineno}%

\usepackage{subfigure}

\newcommand{\bc}{\textbf{C}}

\newcommand{\p}{\partial}
\newcommand{\bb}{\boldsymbol}
\newcommand{\bu}{\boldsymbol{v}}

\newtheorem{remark}{Remark}[section]
\newtheorem{thm}{Theorem}[section]

\newtheorem{lemma}[thm]{Lemma}
\newtheorem{definition}{Definition}[section]
\newcommand{\jump}[1]{[\![#1]\!]}
\newcommand{\df}[2]{\frac{\partial #1}{\partial #2}} 

\newcommand{\pe}{p_e}
\newcommand{\pii}{p_i}

\newcommand{\con}{\textbf{U}}

\newcommand{\f}{\textbf{f}}
\newcommand{\F}{\boldsymbol{f}}

\newcommand{\ent}{\mathcal{E}}
\newcommand{\entf}{\mathbf{q}}
\newcommand{\evar}{\boldsymbol{\mathcal{V}}}

\newcommand{\iph}{i+\frac{1}{2}}

\newcommand{\jph}{j+\frac{1}{2}}

\newcommand{\es}{\textbf{es}}
\newcommand{\explicit}{\textbf{exp}}

\newcommand{\ote}{\textbf{O2}^\es_\explicit}
\newcommand{\othe}{\textbf{O3}^\es_\explicit}
\newcommand{\ofe}{\textbf{O4}^\es_\explicit}

\usepackage{mdframed}
\usepackage[font=small]{caption}

\def\tsc#1{\csdef{#1}{\textsc{\lowercase{#1}}\xspace}}
\tsc{WGM}
\tsc{QE}
\tsc{EP}
\tsc{PMS}
\tsc{BEC}
\tsc{DE}

\begin{document}
\let\WriteBookmarks\relax
\def\floatpagepagefraction{1}
\def\textpagefraction{.001}
\shorttitle{Entropy stable numerical schemes for OFTT-Euler system}
\shortauthors{Singh et~al.}


\title [mode = title]{Entropy stable finite difference schemes for One-Fluid Two-Temperature Euler Non-equilibrium Hydrodynamics}                      

\author[1]{Chetan Singh}[orcid=0009-0002-6822-8696]
\cormark[1]
\ead{chetansingh9956@gmail.com}

\credit{Formal analysis, Investigation, Methodology, Software, Visualization, Validation, Conceptualization, Writing – original draft}

\affiliation[1]{organization={Department of Mathematics, Indian Institute of Technology Delhi}, 
                country={India}}
                
\author[1,2]{Harish Kumar}[]
\ead{hkumar@iitd.ac.in}

\credit{Conceptualization, Supervision, Writing – original draft, Writing – review \& editing, Funding acquisition }   

\affiliation[2]{organization={IITD-Abu Dhabi, Abu Dhabi}, 
	country={UAE}}

\cortext[cor1]{Corresponding author.}

\begin{abstract}
In this work, we consider the One-Fluid Two-Temperature Euler (OFTT-Euler) equations used for modeling non-equilibrium hydrodynamics. The model comprises a system of nonlinear hyperbolic partial differential equations with non-conservative products. The model decomposed the total pressure into two scalar components: one for electrons and one for ions. Our aim in this work is to design entropy-stable finite difference numerical schemes for the model. This is achieved by reformulating the equations such that the reformulated non-conservative part does not contribute to the entropy. Then, we design higher-order entropy-conservative numerical schemes by using Tadmor's relation for the conservative part and higher-order central differences for the non-conservative parts. Finally, we design the entropy-dissipation terms using the entropy-scaled right eigenvectors of the conservative part, thereby deriving the entropy inequality for the entire system. We present several test cases in one and two dimensions to demonstrate the accuracy and stability of the proposed schemes. 
\end{abstract}

\begin{keywords}
	One-fluid two-temperature Euler equations \sep Non-conservative hyperbolic system \sep Entropy stability \sep Finite-difference entropy stable schemes
\end{keywords}

\maketitle

\section{Introduction}
Radiation hydrodynamics plays an essential role in astrophysics~\cite{castor2007astrophysical,springel2010smoothed,teyssier2015grid,winkler2012astrophysical}, inertial confinement fusion (ICF)~\cite{atzeni2004physics,drake2006radiation,mihalas2013foundations,sanz2009radiation}, and high energy density physics fields~\cite{drake2006introduction,drake2018journey,davidson2004frontiers,matzen2005pulsed}. The phenomenon models radiative, high-energy-density plasmas containing ions and electrons, with each species in its own thermal equilibrium. This results in each species (ions and electrons) having different temperatures, which are typically not equal; hence, they need to be evolved separately.

Assuming quasi-neutrality and ignoring radiative effects yields a fluid description with one density and velocity but two distinct temperatures. The model is known as the One-Fluid Two-Temperature Euler (OFTT-Euler) system. The model was proposed in~\cite{cheng2024high}, and ignores viscosity, thermal relaxation, and thermal conduction. 

The model is a system of hyperbolic partial differential equations with non-conservative products. Hence, developing stable numerical methods is particularly challenging. The key difficulty is choosing a specific path to define weak admissible solutions, which is usually unknown~\cite{dal1995definition}. Also, the numerical solutions are affected by the numerical viscosity~\cite{abgrall2010comment}. Usually,  a linear path is considered to design approximate Riemann solvers~\cite{cheng2024high,cheng2024highweno,cheng2024lagrangian,singh2024eigen,balsara2025physical}.
Numerical methods for the OFTT-Euler models have been developed in several articles~\cite{despres2001lagrangian,breil2011multi,moiseev2015solution,shiroto2018structure,aregba2018modelling,chalons2005riemann,lin2006discontinuous,cockburn1989tvb,cockburn1998runge,sangam2021derivation}. Also, for the related model, in ~\cite{cheng2024lagrangian,cheng2024highweno}, authors have presented numerical schemes for the three-temperature radiation hydrodynamics equations, which govern the evolution of the interaction between radiation and plasmas. In~\cite{cheng2025mathematical}, the authors presented simulations of two-material, two-temperature compressible flows. Additionally, in~\cite{zhao2025fifth}, the authors have developed a fifth-order equilibrium-preserving, path-conservative, characteristic-wise AWENO scheme for the OFTT-Euler model that incorporates electron–ion energy-exchange source terms. For the OFFT-Euler equations,  in~\cite{cheng2024high}, a higher-order discontinuous Galerkin method is proposed. More recently, authors have proposed positivity-preserving higher-order discontinuous Galerkin methods for two-temperature compressible flows in~\cite{cheng2025analysis}.

As the OFTT-Euler system is hyperbolic, entropy stability is one of the few theoretical nonlinear stability estimates. Hence, it is desirable to design numerical methods that are consistent with the entropy condition associated with the system. Several authors have developed entropy-stable finite-difference schemes for hyperbolic systems in conservative form~\cite{bhoriya2023high,kumar_entropy_2012,sen_entropy_2018,chandrashekar2016entropy,chandrashekar2013kinetic,fjordholm2012arbitrarily,tadmor2003entropy,derigs2018ideal}. More recently, these methods have been extended to several non-conservative systems~\cite{yadav2023entropy,singh2024entropy,singh2026entropy,rueda2024entropy}. In this paper, we propose entropy-stable finite difference schemes for the OFTT-Euler system. This is achieved as follows:
\begin{enumerate}
	\item First, we present the entropy stability of the OFTT-Euler system at the continuous level. Then, following~\cite{singh2024entropy,yadav2023entropy}, we propose the reformulation of the OFTT-Euler system such that the new non-conservative terms play no role in the entropy evolution. 
	\item For the new conservative part, we first derive the second-order entropy-conservative numerical flux, which is then used to design higher-order entropy-conservative numerical fluxes. Combining this with a suitable higher-order central difference-based discretization of the derivatives in non-conservative terms, we achieve entropy conservation for the whole system.
	\item To achieve entropy stability, we then design a higher-order entropy diffusion operator using entropy-scaled eigenvectors for the conservative parts. The whole scheme is then shown to be entropy stable.
\end{enumerate}

The rest of the article is organized as follows: In Section~\ref{OFTT_system}, we introduce the OFTT-Euler system and present its eigenvalues and eigenvectors. 
In Section~\ref{Entropy_analysis_OFTT}, we first analyze the entropy framework of the OFTT-Euler system. We then present a novel reformulation of the OFTT-Euler equations, which makes the equations suitable for designing entropy stable schemes. In Section~\ref{sec:semi_discrete}, we present the entropy stable finite-difference scheme for the reformulated OFTT-Euler model. Section~\ref{sec:Fully_discrete} contains the time discretization. In Section~\ref{sec:NR}, we present numerical test cases. Finally, Section~\ref{sec:conc} provides concluding remarks.

\section{OFTT-Euler model for non-equilibirium Hydrodynamics}\label{OFTT_system}
Following~\cite{cheng2024high,cheng2025analysis}, the OFTT-Euler model is given by,
\begin{equation}\label{eq:tt_euler}
	\frac{\p}{\p t}\begin{pmatrix}
		\rho\\
		\rho \bu\\
		\rho e_e\\
		\rho e_i\
	\end{pmatrix}+\nabla\cdot\begin{pmatrix}
		\rho \bu\\
		\rho \bu\otimes\bu+p \textbf{I}\\
		\rho e_e \bu\\
		\rho e_i \bu
	\end{pmatrix}=\begin{pmatrix}
		0\\
		\textbf{0}\\
		\textcolor{blue}{-\pe \nabla\cdot\bu}\\
		\textcolor{blue}{-\pii \nabla\cdot\bu}
	\end{pmatrix}.\quad \begin{aligned}
		&\text{(a)}\\
		&\text{(b)}\\
		&\text{(c)}\\
		&\text{(d)}
	\end{aligned}
\end{equation}
Here, $\rho$ is the density, $\bu=(v_x,v_y)^\top$ is the velocity, $p$ is the total pressure of the fluid. The total fluid pressure $p$ is described as $p= \pe + \pii,$ where $p_e$ and $p_i$ are the pressures for electrons and ions, respectively. Similarly,  $e_e$ and $e_i$ are the specific internal energies for electrons and ions, respectively. Following~\cite {cheng2024high}, we consider the ideal gas behavior for both species and use the following equation of state:
\begin{equation}\label{EOS}
	\rho e_e = C_{ve}\rho T_e = \frac{\pe}{\gamma_e-1}~~\text{and}~~\rho e_i = C_{vi}\rho T_i = \frac{\pii}{\gamma_i-1}.
\end{equation}
Here, $C_{ve}$ and $C_{vi}$ denote the specific heats, and $T_e$ and $T_i$ are the temperatures for electrons and ions, respectively. The parameters $\gamma_e$ and $\gamma_i$ are the specific heat ratios for the species, with the constraints $C_{ve}>0$, $C_{vi}>0$, $\gamma_e >1$, and $\gamma_i> 1$. We also define the total energy,
\begin{equation}\label{Total energy}
	E = \rho e + \frac{\rho|\bu|^2}{2} = \frac{\pe}{\gamma_e-1} + \frac{\pii}{\gamma_i-1} + \frac{\rho|\bu|^2}{2}
\end{equation}
where $e = e_e + e_i$ is the specific total internal energy of the fluid. The equations~(\ref{eq:tt_euler}a) and (\ref{eq:tt_euler}b) are the conservation of mass and momentum, respectively. The other two equations are for the evolution of the internal energies for each species. We have highlighted the non-conservative terms in blue. Combining~\eqref{eq:tt_euler}, \eqref{EOS} and \eqref{Total energy}, we obtain the conservation of total energy,
\begin{equation*}
	\frac{\p E}{\p t} + \nabla\cdot\left(\left(E+p\right)\bu\right)=0.
\end{equation*}
Replacing the electron internal energy equations (\ref{eq:tt_euler}c)  with the total energy equation, the system~\eqref{eq:tt_euler} with conservative variables $\con=(\rho, \rho\bu, E, \pii)^\top$, can be expressed as follows: 
\begin{equation}\label{eq:tt_euler_con}
	\frac{\p}{\p t}\begin{pmatrix}
		\rho\\
		\rho \bu\\
		E\\
		\pii
	\end{pmatrix}+\nabla\cdot\begin{pmatrix}
		\rho \bu\\
		\rho \bu\otimes\bu+p \textbf{I}\\
		\left(\left(E+p\right)\bu\right)\\
		\pii \bu
	\end{pmatrix}=\textcolor{blue}{\begin{pmatrix}
			0\\
			\textbf{0}\\
			0\\
			-(\gamma_i -1)\pii \nabla\cdot\bu
	\end{pmatrix}}\quad \begin{aligned}
		&\text{(a)}\\
		&\text{(b)}\\
		&\text{(c)}\\
		&\text{(d)}
	\end{aligned}
\end{equation}
Again, we have highlighted the non-conservative terms in blue. The set of equations~\eqref{eq:tt_euler} can be written in quasilinear form,
\begin{equation*}
	\frac{\p \textbf{W}}{\p t} + \mathcal{A}_x\frac{\p \textbf{W}}{\p x} + \mathcal{A}_y\frac{\p \textbf{W}}{\p y} = 0,
\end{equation*}
where, $\textbf{W} = \left\{\rho,~\bu,~\pe,~\pii\right\}$ is the vector of primitive variables and the matrices $\mathcal{A}_x$ and $\mathcal{A}_y$ are given by,
\begin{align*}
	\mathcal{A}_x=\begin{pmatrix}
		v_x & \rho & 0 & 0 & 0\\
		0 & v_x & 0 & \frac{1}{\rho} & \frac{1}{\rho}\\
		0 & 0 & v_x & 0 & 0\\
		0 & \gamma_e \pe & 0 & v_x & 0\\
		0 & \gamma_i \pii & 0 & 0 & v_x
	\end{pmatrix},~ \text{ and }
	\mathcal{A}_y=\begin{pmatrix}
		v_y & 0 & \rho & 0 & 0\\
		0 & v_y & 0 & 0 & 0\\
		0 & 0 & v_y & \frac{1}{\rho} & \frac{1}{\rho}\\
		0 & 0 & \gamma_e \pe & v_y & 0\\
		0 & 0 & \gamma_i \pii & 0 & v_y
	\end{pmatrix}.
\end{align*}
For the solutions to be physically admissible, we consider the solution set,
\begin{equation}
	\label{eq:tt_domain}
	\Omega=\{\con\in \mathbb{R}^5 |~\rho>0,~\pe>0,\pii>0\}.
\end{equation}
For the states $\con\in\Omega$, the system~\eqref{eq:tt_euler} is hyperbolic and the eigenvalues of the OFTT-Euler system are,
\begin{equation}
	\bb{\Lambda^d}=(v_d,~ v_d,~ v_d, ~v_d\pm c_f), \quad c_f = \sqrt{\frac{\left(\gamma_e \pe + \gamma_i \pii\right)}{\rho}}, \quad d\in\left\{x,y\right\}.
	\label{eq:eigenvalues_1}
\end{equation}
The set of right eigenvectors of the matrix $\mathcal{A}_x$ corresponding to eigenvalues $v_x,~ v_x,~ v_x,$ and  $v_x\pm c_f$ are
\begin{align*}\mathcal{R}^1_{v_x}=
	\begin{pmatrix}
		1\\
		0\\
		0\\
		0\\
		0 
	\end{pmatrix},~\mathcal{R}^2_{v_x}=\begin{pmatrix}
		0\\
		0\\
		1\\
		0\\
		0 
	\end{pmatrix},~\mathcal{R}^3_{v_x}=\begin{pmatrix}
		0\\
		0\\
		0\\
		-1\\
		1 
	\end{pmatrix},\text{ and }~\mathcal{R}_{v_x\pm c_f}=\begin{pmatrix}
		\rho^2\\
		\pm\sqrt{\left(\gamma_e \pe + \gamma_i \pii\right)\rho}\\
		0\\
		\gamma_e \pe\rho\\
		\gamma_i \pii\rho 
	\end{pmatrix},
\end{align*}
respectively.
Similarly, the right eigenvectors of $\mathcal{A}_y$ corresponding to eigenvalues $v_y,~ v_y,~ v_y, ~v_y\pm c_f$ are,
\begin{align*}\mathcal{R}^1_{v_y}=
	\begin{pmatrix}
		1\\
		0\\
		0\\
		0\\
		0 
	\end{pmatrix},~\mathcal{R}^2_{v_y}=\begin{pmatrix}
		0\\
		1\\
		0\\
		0\\
		0 
	\end{pmatrix},~\mathcal{R}^3_{v_y}=\begin{pmatrix}
		0\\
		0\\
		0\\
		-1\\
		1 
	\end{pmatrix},\text{ and }~\mathcal{R}_{v_y\pm c_f}=\begin{pmatrix}
		\rho^2\\
		0\\
		\pm\sqrt{\left(\gamma_e \pe + \gamma_i \pii\right)\rho}\\
		\gamma_e \pe\rho\\
		\gamma_i \pii\rho 
	\end{pmatrix},
\end{align*}
respectively. Now, we have the following result,
\begin{lemma} For the system \eqref{eq:tt_euler_con},  the characteristic fields corresponding to the eigenvalues $u_d$, $d\in\{x,y\}$ are linearly degenerate, and the characteristic fields corresponding to the eigenvalues $u_d\pm c_f$ are genuinely nonlinear.
\end{lemma}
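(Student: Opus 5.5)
The plan is to check the standard criteria directly in the primitive variables $\textbf{W}=(\rho,v_x,v_y,\pe,\pii)^\top$. For the quasilinear form the characteristic speeds and right eigenvectors are already listed above. Linear degeneracy or genuine nonlinearity of a field is invariant under a smooth change of variables: the eigenvalues are unchanged and the eigenvectors transform by the Jacobian $\partial\con/\partial\textbf{W}$. So we may test $\nabla_{\textbf{W}}\lambda\cdot\mathcal{R}$ in primitive variables rather than in the conservative variables of \eqref{eq:tt_euler_con}. (Here $u_d$ in the statement denotes the velocity component $v_d$.) By the symmetry between the $x$ and $y$ directions, it suffices to treat $d=x$ and then note that the $y$ case is identical with the roles of $v_x$ and $v_y$ exchanged.

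\textbf{Linearly degenerate fields.} For $\lambda=v_x$ we have $\nabla_{\textbf{W}}\lambda=(0,1,0,0,0)$. Each of $\mathcal{R}^1_{v_x},\mathcal{R}^2_{v_x},\mathcal{R}^3_{v_x}$ has zero second component, so $\nabla_{\textbf{W}}\lambda\cdot\mathcal{R}^k_{v_x}=0$ for $k=1,2,3$. Because the eigenvalue is triple, linear degeneracy should be understood for the whole eigenspace: every eigenvector in the span satisfies the identity, and this follows by linearity.

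\textbf{Genuinely nonlinear fields.} For $\lambda=v_x\pm c_f$ with $c_f^2=(\gamma_e\pe+\gamma_i\pii)/\rho$, compute
\[
\nabla_{\textbf{W}}c_f=\frac{1}{2c_f}\Bigl(-\frac{\gamma_e\pe+\gamma_i\pii}{\rho^2},\,0,\,0,\,\frac{\gamma_e}{\rho},\,\frac{\gamma_i}{\rho}\Bigr).
\]
Pairing with $\mathcal{R}_{v_x\pm c_f}$ gives three contributions:
\begin{itemize}
\item the velocity component contributes $\pm\sqrt{(\gamma_e\pe+\gamma_i\pii)\rho}=\pm\rho c_f$;
\item the density component contributes $\mp\frac{1}{2c_f}(\gamma_e\pe+\gamma_i\pii)=\mp\frac{\rho c_f}{2}$;
\item the pressure components contribute $\pm\frac{1}{2c_f}(\gamma_e^2\pe+\gamma_i^2\pii)$.
\end{itemize}
Collecting terms, and writing $\rho c_f^2=\gamma_e\pe+\gamma_i\pii$,
\[
\nabla\lambda\cdot\mathcal{R}=\pm\frac{1}{2c_f}\bigl(\gamma_e(\gamma_e+1)\pe+\gamma_i(\gamma_i+1)\pii\bigr).
\]
This is strictly nonzero on $\Omega$, since $\pe,\pii>0$ and $\gamma_e,\gamma_i>1$.

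\textbf{Expected obstacle.} The main step to get right is the bookkeeping of the density component. The listed eigenvector has first entry $\rho^2$ and velocity entry $\pm\sqrt{(\gamma_e\pe+\gamma_i\pii)\rho}$, so I would first verify that it really satisfies $(\mathcal{A}_x-\lambda I)\mathcal{R}=0$. If it does not, I would replace it by the eigenvector $(\rho,\pm c_f,0,\gamma_e\pe,\gamma_i\pii)^\top$ and redo the computation. In either normalization the sign-definiteness conclusion holds, because the result is a positive multiple of $\gamma_e(\gamma_e+1)\pe+\gamma_i(\gamma_i+1)\pii$.
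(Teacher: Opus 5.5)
Your proposal is correct and is essentially the paper's proof. Both check $\nabla_{\textbf{W}}\lambda\cdot\mathcal{R}$ in primitive variables for $d=x$; you also justify working in primitive variables via invariance under the change of variables, which the paper leaves implicit. Your value $\pm\frac{1}{2c_f}\bigl(\gamma_e(\gamma_e+1)\pe+\gamma_i(\gamma_i+1)\pii\bigr)$ coincides with the paper's, since $\sqrt{(\gamma_e\pe+\gamma_i\pii)\rho}=\rho c_f$. The obstacle you anticipate does not arise: the listed $\mathcal{R}_{v_x\pm c_f}$ is exactly $\rho\,(\rho,\pm c_f,0,\gamma_e\pe,\gamma_i\pii)^\top$, which does satisfy $(\mathcal{A}_x-\lambda I)\mathcal{R}=0$.
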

\begin{proof}
	We will prove the result only in the $x$-direction. Proof for the $y$-direction is similar. For the eigenvalue $v_x$,
	\[\nabla v_x = \{0,1,0,0,0\}.\] Now it's easy to see that,
	\[\nabla v_x\cdot \mathcal{R}^i_{v_x} = 0,~~~i\in\{1,2,3\}.\]
	For the eigenvalue $v_x\pm c_f$,
	\[\nabla \left(v_x \pm c_f\right) = \left\{\mp\frac{\sqrt{\left(\pe\gamma_e+\pii\gamma_i\right)}}{2\rho\sqrt{\rho}},1,0,\pm\frac{\gamma_e}{2\sqrt{\left(\pe\gamma_e+\pii\gamma_i\right)\rho}},\pm\frac{\gamma_i}{2\sqrt{\left(\pe\gamma_e+\pii\gamma_i\right)\rho}}\right\}.\]
	A simple calculation shows that,
	\[\nabla v_x\cdot \mathcal{R}_{v_x\pm c_f} =\pm\frac{\left(\pe\gamma_e(1+\gamma_e)+\pii\gamma_i(1+\gamma_i)\right)\rho}{2\sqrt{\left(\pe\gamma_e+\pii\gamma_i\right)\rho}},\]
	which clearly do not vanish for the admissible solutions.
\end{proof}
\section{Entropy analysis and reformulation}\label{Entropy_analysis_OFTT}
The entropy $\ent$ and the entropy flux $\entf_d$ for the OFTT-Euler system~\eqref{eq:tt_euler_con}, are given by,
\begin{align}\label{entropy-pair}
	\ent= -\rho s, \qquad \entf_d=-\rho v_d s,\qquad~~s=\frac{1}{(\gamma_e-1)}\ln \left( \dfrac{\pe}{\rho^{\gamma_e}} \right) + \frac{1}{(\gamma_i-1)}\ln \left( \dfrac{\pii}{\rho^{\gamma_i}} \right),
\end{align}
where, $d\in\left\{x,y\right\}$.  We now have the following result:

\begin{lemma}
	For a smooth solution of the OFTT-Euler system, we have the following equality,
	\begin{align*}
		\p_t s+ v_x \p_x s=0,
	\end{align*}
	which results in the entropy equality,
	\begin{equation}
		\p_t \ent+ \p_x \entf_x=0. \label{entropy_pair_equality}
	\end{equation}
\end{lemma}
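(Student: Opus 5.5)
The plan is to work in the $x$-direction only, using the primitive-variable form of the equations, since $s$ is a function of $\rho$, $\pe$, $\pii$ alone. The first step is to extract from the quasilinear system (the rows of $\mathcal{A}_x$) the smooth evolution equations for these three variables:
\begin{align*}
\p_t\rho + v_x\p_x\rho + \rho\,\p_x v_x &= 0,\\
\p_t\pe + v_x\p_x\pe + \gamma_e\pe\,\p_x v_x &= 0,\\
\p_t\pii + v_x\p_x\pii + \gamma_i\pii\,\p_x v_x &= 0.
\end{align*}
The pressure equations follow from (\ref{eq:tt_euler}c)--(\ref{eq:tt_euler}d) together with the equation of state \eqref{EOS} and the mass equation. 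These will be taken as given, since they are exactly the rows of $\mathcal{A}_x$ recorded above.

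The second step is to apply the material derivative $D_t := \p_t + v_x\p_x$ to $s$, treating it species by species. Writing
\[
s_e=\frac{1}{\gamma_e-1}\big(\ln\pe-\gamma_e\ln\rho\big),
\]
the chain rule gives
\[
D_t s_e=\frac{1}{\gamma_e-1}\Big(\frac{D_t\pe}{\pe}-\gamma_e\frac{D_t\rho}{\rho}\Big).
\]
Substituting $D_t\pe=-\gamma_e\pe\,\p_xv_x$ and $D_t\rho=-\rho\,\p_xv_x$ makes the bracket equal to $-\gamma_e\p_xv_x+\gamma_e\p_xv_x=0$. The ion part $s_i$ vanishes by the identical computation with $\gamma_i$. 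Adding the two parts yields $\p_t s+v_x\p_x s=0$.

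The third step obtains the entropy equality by multiplying $D_t s=0$ by $-\rho$ and adding $-s$ times the mass equation $\p_t\rho+\p_x(\rho v_x)=0$. This produces
\[
\p_t(-\rho s)+\p_x(-\rho v_x s)=0,
\]
which is \eqref{entropy_pair_equality} with the pair \eqref{entropy-pair}.

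The only step needing care is the first one. There the non-conservative terms $-\pe\nabla\cdot\bu$ and $-\pii\nabla\cdot\bu$ must convert, via \eqref{EOS}, into the factors $\gamma_e\pe$ and $\gamma_i\pii$ in the pressure equations. Concretely, $\frac{1}{\gamma_e-1}\big(\p_t\pe+\p_x(\pe v_x)\big)=-\pe\p_x v_x$ rearranges to $D_t\pe=-\gamma_e\pe\,\p_x v_x$, and the ion equation is handled the same way. Once this is in place, the cancellation in the second step is exact for each species separately. This is precisely why the entropy splits additively into electron and ion parts. The $y$-direction, and hence the full two-dimensional statement, follows verbatim with $v_y$ in place of $v_x$.
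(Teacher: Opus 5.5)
Your proposal is correct and follows essentially the same route as the paper. The paper also writes the smooth evolution equations for $\rho$, $\pe$, $\pii$ obtained from \eqref{eq:tt_euler}, uses the chain rule to show that $\p_t s + v_x \p_x s$ cancels exactly, and then combines this with mass conservation to get \eqref{entropy_pair_equality}. Your species-by-species use of the material derivative is only a tidier bookkeeping of the same cancellation, which the paper carries out for both species at once; as a side benefit, it shows that each partial entropy is advected separately.
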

\begin{proof}
	From system~\eqref{eq:tt_euler}, we have, 
	\begin{align*}
		&\p_t \rho+ v_x \p_x \rho+ \rho \p_x v_x=0, \\
		&\p_t p_e+ v_x \p_x p_e+ p_e \p_x v_x + (\gamma_e-1)p_e\p_x v_x=0, \\
		&\p_t p_i+ v_x \p_x p_i+ p_i \p_x v_x + (\gamma_i-1)p_i\p_x v_x=0.
	\end{align*}
	Using these, we get,
	\begin{align*}
		\p_t s =& -\frac{1}{(\gamma_e-1)\pe}\left\{\textcolor{magenta}{v_x \p_x p_e}+ \textcolor{red}{p_e \p_x v_x + (\gamma_e-1)p_e\p_x v_x}\right\} \\
		&-\frac{1}{(\gamma_i-1)\pii}\left\{\textcolor{JungleGreen}{v_x \p_x p_i}+ \textcolor{red}{p_i \p_x v_x + (\gamma_i-1)p_i\p_x v_x}\right\} \\
		&+ \left(\frac{\gamma_e}{(\gamma_e-1)\rho}+\frac{\gamma_i}{(\gamma_i-1)\rho}\right) \left\{\textcolor{Maroon}{v_x \p_x \rho}+ \textcolor{red}{\rho \p_x v_x}\right\},
	\end{align*}
	and
	
	\begin{align*}
		v_x \p_x s =& \textcolor{magenta}{\frac{v_x \p_x p_e}{(\gamma_e-1)\pe}} + \textcolor{JungleGreen}{\frac{v_x \p_x p_i}{(\gamma_i-1)\pii}}
		- \textcolor{Maroon}{\left(\frac{\gamma_e v_x\p_x \rho}{(\gamma_e-1)\rho}+\frac{\gamma_i v_x\p_x \rho}{(\gamma_i-1)\rho}\right)}.
	\end{align*}  
	Adding them results in,
	\begin{align*}
		\p_t s+ v_x \p_x s=0
	\end{align*}
	Combining this with mass conservation results in \eqref{entropy_pair_equality}.
\end{proof}  

\begin{remark}
	In two dimensions, the above entropy equality~\eqref{entropy_pair_equality} is,
	\begin{equation}
		\label{entropy_pair_equality_2d}
		\p_t \ent + \p_x \entf_x + \p_y \entf_y = 0,
	\end{equation}
	and for non-smooth solutions, we have the entropy inequality,
	\begin{equation}
		\label{entropy_pair_equality_inq}
		\p_t \ent+ \p_x \entf_x+\p_y\entf_y \le0.
	\end{equation}
\end{remark}
\subsection{Reformulation of OFFT-Euler System}\label{sec:reformulation}
Following the ideas in~\cite{yadav2023entropy,singh2024entropy}, we want to reformulate the OFFT-Euler equations in such a way that non-conservative terms do not contribute to the entropy. We propose the following reformulation of the OFTT-Euler system~\eqref{eq:tt_euler_con} as,
\begin{equation}\label{eq:tt_euler_reformulate}
	\frac{\p}{\p t}\begin{pmatrix}
		\rho\\
		\rho \bu\\
		E\\
		\pii
	\end{pmatrix}+\nabla\cdot\begin{pmatrix}
		\rho \bu\\
		\rho \bu\otimes\bu+2\pe \textbf{I}\\
		\left(E+2\pe\right)\bu\\
		\pii \bu
	\end{pmatrix}=\textcolor{blue}{\begin{pmatrix}
			0\\
			\nabla\cdot\left(\left(\pe-\pii\right)\textbf{I}\right)\\
			\nabla\cdot\left(\left(\pe-\pii\right)\bu\right)\\
			-(\gamma_i -1)\pii \nabla\cdot\bu
	\end{pmatrix}}.\quad \begin{aligned}
		&\text{(a)}\\
		&\text{(b)}\\
		&\text{(c)}\\
		&\text{(d)}
	\end{aligned}
\end{equation}
where we will treat blue colored terms as a non-conservative product. The resulting system of equations can be written as,
\begin{align}\label{eq:oftt_ref_noncons}
	\frac{\p \con}{\p t}+\frac{\p \f_{x}}{\p x} + \frac{\p \f_{y}}{\p y} + \bc_{x}(\con)\frac{\p \con}{\p x} + \bc_{y}(\con)\frac{\p \con}{\p y}=0.
\end{align}
where the flux functions $\f_x$ and $\f_y$ are,
\begin{align*}
	\f_x=\begin{pmatrix}
		\rho v_x\\
		\rho v_x^2 + 2\pe\\
		\rho v_x v_y \\
		\left(E+ 2\pe\right)v_x\\
		\pii v_x
	\end{pmatrix},~
	\textbf{f}_y&=\begin{pmatrix}
		\rho v_y\\
		\rho v_x v_y \\
		\rho v_y^2 + 2\pe\\
		\left(E+ 2\pe\right)v_y\\
		\pii v_y
	\end{pmatrix},
\end{align*}
The matrices $\bc_{x}$ and $\bc_y$ are given by,
\begin{align*}
	\bc_{x}(\con)=\begin{pmatrix}
		0 & 0 & 0 & 0 & 0\\
		-\frac{(\gamma_e-1)|\bu|^2}{2} & (\gamma_e-1)v_x & (\gamma_e-1)v_y & -(\gamma_e-1) & \frac{\gamma_e-1}{\gamma_i-1} + 1\\
		0 & 0 & 0 & 0 & 0\\
		-\left(\frac{(\gamma_e-1)|\bu|^2}{2} + \frac{\pii-\pe}{\rho}\right)v_x & (\gamma_e-1) v_x^2 + \frac{\pii-\pe}{\rho} & (\gamma_e-1)v_x v_y & -(\gamma_e-1)v_x & \left(\frac{\gamma_e-1}{\gamma_i-1} + 1\right)v_x\\
		-\frac{(\gamma_i-1)\pii v_x}{\rho} & \frac{(\gamma_i-1)\pii}{\rho} & 0 & 0 & 0
	\end{pmatrix}
\end{align*}
and
\begin{align*}
	\bc_{y}(\con)=\begin{pmatrix}
		0 & 0 & 0 & 0 & 0\\
		0 & 0 & 0 & 0 & 0\\
		-\frac{(\gamma_e-1)|\bu|^2}{2} & (\gamma_e-1)v_x & (\gamma_e-1)v_y & -(\gamma_e-1) & \frac{\gamma_e-1}{\gamma_i-1} + 1\\
		-\left(\frac{(\gamma_e-1)|\bu|^2}{2} + \frac{\pii-\pe}{\rho}\right)v_y & (\gamma_e-1)v_x v_y & (\gamma_e-1) v_y^2 + \frac{\pii-\pe}{\rho}  & -(\gamma_e-1)v_y & \left(\frac{\gamma_e-1}{\gamma_i-1} + 1\right)v_y\\
		-\frac{(\gamma_i-1)\pii v_y}{\rho} & 0 & \frac{(\gamma_i-1)\pii}{\rho} & 0 & 0
	\end{pmatrix},
\end{align*}
respectively. The entropy variable $\evar=\frac{\p \ent}{\p \con}$ for the OFTT-Euler system~\eqref{eq:tt_euler_con}, is given by,
\begin{align}
	\evar&=\Bigg(\frac{\gamma_e}{(\gamma_e-1)}+\frac{\gamma_i}{(\gamma_i-1)}-s-\frac{\beta_e |\bu|^2}{2},~\beta_e\bu,~-\beta_e,~\frac{\beta_e-\beta_i}{(\gamma_i-1)}\Bigg)^\top,
	\label{eq:envar}
\end{align} 
where, $\beta_e=\frac{\rho}{\pe}$ and $\beta_i=\frac{\rho}{\pii}.$ We note that similar to the case in~\cite{yadav2023entropy,singh2024entropy}, the flux functions $\f_d$ with $d\in\{x,y\},$ satisfy the condition,
\begin{equation}\label{eq:ent_def}
	{\entf_d}'(\con) = \evar{\f_d}'(\con),
\end{equation}
and the non-conservative matrices $\bc_{d}(\con), d\in\{x,y\}$ satisfy the condition, 
\begin{equation}\label{eq:ent_noncons}
	\evar^\top \bc_d(\con) = 0.
\end{equation}
The proof is provided in Appendix~\ref{Non_con_entropy_evolution}.

\begin{remark}
	In Appendix~\ref{Non_Symmetrizability}, we discuss the symmetrizability of the OFFT-Euler system. We show that the complete system is not symmetrizable. However, if we only consider the conservative part, the system is indeed symmetrizable due to the presence of an entropy pair.
\end{remark}
\section{Semi-discrete numerical schemes}\label{sec:semi_discrete}
The reformulated OFTT-Euler equation~\eqref{eq:oftt_ref_noncons} are suitable for designing entropy stable finite difference numerical schemes. We start by considering a uniform mesh of size $(\Delta x,\Delta y)$ with cell centers at the grid points, $(x_i,y_j), \;\;0 \leq i \leq n_x$ and $0 \leq j \leq n_y.$  The vertices of the cells are given by $(x_{i+\frac{1}{2}}, y_{j+\frac{1}{2}})$ with $x_{i+\frac{1}{2}} = \frac{x_{i} + x_{i+1}}{2}$ and $y_{j+\frac{1}{2}} = \frac{y_{j} + y_{j+1}}{2}.$

A semi-discrete finite difference numerical scheme for the system~\eqref{eq:oftt_ref_noncons}, can be written as,
\begin{align}\label{eq:semi-discrete_fd}
	\frac{d \con_{i,j}}{d t}= -\frac{\F_{x, i+\frac{1}{2}, j}-\F_{x, i-\frac{1}{2}, j}}{\Delta x}-\frac{\F_{y, i, j+\frac{1}{2}}-\F_{y, i, j-\frac{1}{2}}}{\Delta y} -\bc_x(\con_{i,j})\left(\frac{\p \con}{\p x} \right)_{i,j} -\bc_y(\con_{i,j}) \left(\frac{\p \con}{\p y} \right)_{i,j}.
\end{align}
Here, the numerical fluxes $\F_{x, i+\frac{1}{2}, j}$ and $\F_{ y,i, j+\frac{1}{2}}$ are consistent with the continuous fluxes $\f_{x}$ and $\f_{y}$, respectively. Additionally, $\left(\frac{\p \con}{\p x}  \right)_{i,j}$ and $\left(\frac{\p \con}{\p y} \right)_{i,j}$ are discretized using central difference approximations of appropriate order. We begin by developing higher-order entropy conservative schemes for system~\eqref{eq:oftt_ref_noncons}.
\subsection{Higher order entropy conservative schemes}
\label{subsec:ent_cons}
For a grid function $a_{i,j}$, let us first introduce the following notation for the jumps 
\begin{align*}
	[\![(\cdot)]\!]_{\iph,j} =(\cdot)_{i+1,j}-(\cdot)_{i,j},\qquad 
	[\![(\cdot)]\!]_{i,\jph} =(\cdot)_{i,j+1}-(\cdot)_{i,j},
\end{align*}
and averages
\begin{align*}
	\overline{(\cdot)}_{\iph,j}= \frac{(\cdot)_{i+1,j} + (\cdot)_{i,j}}{2}, \qquad\overline{(\cdot)}_{i,\jph}= \frac{(\cdot)_{i,j+1} + (\cdot)_{i,j}}{2},
\end{align*}
across the cell boundaries. For the conservative part, we recall that symmetric, consistent numerical fluxes $\tilde{\F}_{x,i+\frac{1}{2}, j}$ and $\tilde{\F}_{ y,i, j+\frac{1}{2}}$ satisfying 
\begin{align}
	\quad \jump{\evar}_{\iph,j}  \cdot \tilde{\F}_{x,i+\frac{1}{2}, j}=\jump{\mathcal{F}_x}_{\iph,j},~~~~~~
	\quad\jump{\evar}_{i,\jph}  \cdot \tilde{\F}_{y,i,j+\frac{1}{2}}&=\jump{\mathcal{F}_y}_{i,\jph}.\label{eq:consrvative_flux}
\end{align}
are entropy conservative (see~\cite{tadmor1987numerical,tadmor2003entropy}) and second-order accurate. Here,
\begin{align}
	\mathcal{F}_d&=\evar \cdot \textbf{f}_d -\entf_d=2\rho u_d,\quad d\in\{x,y\}.\label{entropy_potential}
\end{align}
are {\em entropy potentials.} We will now derive the expressions for these numerical fluxes. For the simplicity of the notation, we will drop the grid notation by subscripts $i$ and $j$ and instead use subscripts $l$ and $r$ to represent left and right states, respectively. Then the jumps and averages of these states are denoted by,
$$\jump{a}= a_r-a_l\quad \text{ and }\quad\bar{a} = \frac{a_r+a_l}{2},$$
for a scalar function $a$. 
Let us also denote the logarithmic average 
$$
a^{\ln} = \frac{\jump{a}}{\jump{\ln{a}}}.
$$
We will now describe the derivation of the entropy conservative numerical flux $\tilde{\F}_x$  in $x$-direction, denoted componentwise as,
$$
\tilde{\F}_x(\con_l,\con_r)=\left[ \tilde{f}_{x}^{(1)},\tilde{f}_{x}^{(2)}, \tilde{f}_{x}^{(3)}, \tilde{f}_{x}^{(4)}, \tilde{f}_{x}^{(5)}\right]^{\top}.
$$ 
In~\cite{tadmor1987numerical,tadmor2003entropy,chandrashekar2013kinetic,roe2006affordable,ismail2009affordable}, several authors have proposed different approaches for calculating an entropy-conservative flux. To derive the expression for the flux $\tilde{\F}_{x},$ we follow the procedure described in~\cite{chandrashekar2013kinetic}.

Our aim to construct an entropy conservative flux $\tilde{\F}_x$ that satisfied the identity 
\begin{equation}\label{eq:EC_condition}
	\jump{\evar}\cdot \tilde{\F}_x = \jump{\mathcal{F}_x}.
\end{equation}
We first change the expression for jump in $[\![ \evar]\!],$ to expresses it in jumps $[\![ \rho]\!], [\![ v_{x}]\!], [\![ v_{y}]\!], [\![ \beta_e]\!], [\![ \beta_i]\!]$ as
\[[\![ \evar]\!]=\begin{pmatrix}
	\frac{2}{\rho^{\ln}} [\![ \rho]\!]  + \frac{1}{(\gamma_i-1)}\dfrac{[\![ \beta_{i}]\!]}{{\beta}^{\ln}_{i}} -  \bar{\beta}_e\left(\bar{v}_{x} [\![ v_{x}]\!] +\bar{v}_{y} [\![ v_{y}]\!]  \right)+\left[\frac{1}{(\gamma_e-1){\beta}^{\ln}_e}-\frac{\overline{|v|^{2}}}{2}\right][\![ \beta_e]\!] \\
	\bar{\beta}_e [\![ v_{x}]\!] + \bar{v}_{x} [\![ \beta_e]\!] \\
	\bar{\beta}_e [\![ v_{y}]\!] + \bar{v}_{y} [\![ \beta_e]\!] \\
	- [\![ \beta_e]\!] \\
	\frac{ [\![ \beta_e]\!] - [\![ \beta_i]\!]}{\gamma_i-1}
\end{pmatrix}.\] Then  $\jump{\evar}\cdot \tilde{\F}_x$  can be written as,
\begin{align}
	[\![ \evar]\!] \cdot \tilde{\F}_{x}=& \frac{2\tilde{f}_{x}^{(1)}}{{\rho}^{\ln}} [\![ \rho]\!] +\left(- \bar{v}_{x} \bar{\beta}_e \tilde{f}_{x}^{(1)} + \bar{\beta}_e \tilde{f}_{x}^{(2)}\right) [\![ v_{x}]\!]\nonumber \\
	&+ \left(- \bar{v}_{y} \bar{\beta}_e \tilde{f}_{x}^{(1)} + \bar{\beta}_e \tilde{f}_{x}^{(3)}\right) [\![ v_{y}]\!] +\left(\frac{\tilde{f}_{x}^{(1)}}{(\gamma_i-1){\beta}^{\ln}_i}-\frac{\tilde{f}_{x}^{(5)}}{(\gamma_i-1)}\right)[\![ \beta_i]\!] \nonumber\\
	\label{eq:vdotf}
	&+\Bigg[\left(\frac{1}{(\gamma_e-1){\beta}^{\ln}_e}-\frac{\overline{|v|^{2}}}{2}\right) \tilde{f}_{x}^{(1)} + \bar{v}_{x} \tilde{f}_{x}^{(2)} + \bar{v}_{y} \tilde{f}_{x}^{(3)} - \tilde{f}_{x}^{(4)}+\frac{\tilde{f}_{x}^{(5)}}{(\gamma_i-1)}\Bigg] [\![ \beta_e]\!].
\end{align}
Also, the right hand side of \eqref{eq:EC_condition}, simplifies to,
\begin{align}
	[\![ \mathcal{F}_{x}]\!]=& [\![2\rho v_{x}]\!] = 2\bar{v}_{x} [\![ \rho]\!] + 2\bar{\rho}[\![ v_x]\!]. \label{eq:vdotf_rhs}
\end{align}
By equating to jumps $[\![ \rho]\!], [\![ v_x]\!], [\![ v_y]\!], [\![ \beta_e]\!],$ and $[\![\beta_i]\!]$, in~\eqref{eq:vdotf} and~\eqref{eq:vdotf_rhs}, we get,
$$
\begin{aligned}
	\tilde{f}_{x}^{(1)}=& {\rho}^{\ln} \bar{v}_{x},~~\tilde{f}_{x}^{(2)}= \frac{2\bar{\rho}}{ \bar{\beta}_e}+\bar{v}_{x} \tilde{f}_{x}^{(1)},~~
	\tilde{f}_{x}^{(3)}= \bar{v}_{y} \tilde{f}_{x}^{(1)},~~\tilde{f}_{x}^{(5)}= \frac{\tilde{f}_{x}^{(1)}}{{\beta}^{\ln}_i},\\
	\tilde{f}_{x}^{(4)}=& \left[\frac{1}{ (\gamma_e-1) {\beta}^{\ln}_e} -\frac{\overline{|v|^{2}}}{2}\right] \tilde{f}_{x}^{(1)}+\bar{v}_{x} \tilde{f}_{x}^{(2)}+\bar{v}_{y} \tilde{f}_{x}^{(3)}+\frac{\tilde{f}_{x}^{(5)}}{{\beta}^{\ln}_i}.
\end{aligned}
$$
Similarly, we can derive the expression of entropy conservative numerical flux in $y$-direction $\tilde{\F}_y=\left[ \tilde{f}_{y}^{(1)},\tilde{f}_{y}^{(2)}, \tilde{f}_{y}^{(3)}, \tilde{f}_{y}^{(4)}, \tilde{f}_{y}^{(5)}\right]^{\top}$, which results in,
$$
\begin{aligned}
	\tilde{f}_{y}^{(1)}=& {\rho}^{\ln} \bar{v}_{y},~~
	\tilde{f}_{y}^{(2)}= \bar{v}_{x} \tilde{f}_{y}^{(1)},~~\tilde{f}_{y}^{(3)}= \frac{2\bar{\rho}}{ \bar{\beta}_e}+\bar{v}_{y} \tilde{f}_{y}^{(1)},~~\tilde{f}_{y}^{(5)}= \frac{\tilde{f}_{y}^{(1)}}{{\beta}^{\ln}_i},\\
	\tilde{f}_{y}^{(4)}=& \left[\frac{1}{ (\gamma_e-1) {\beta}^{\ln}_e} -\frac{\overline{|v|^{2}}}{2}\right] \tilde{f}_{y}^{(1)}+\bar{v}_{x} \tilde{f}_{y}^{(2)}+\bar{v}_{y} \tilde{f}_{y}^{(3)}+\frac{\tilde{f}_{y}^{(5)}}{{\beta}^{\ln}_i}.
\end{aligned}
$$
It is easy to check that the above numerical fluxes are consistent with the continuous fluxes,
$\f_x$ and $\f_y.$ 

\begin{remark}
	The scheme~\eqref{eq:semi-discrete_fd} with fluxes $\tilde{\F}_{x,\iph,j} =\tilde{\F}_x(\con_{i,j},\con_{i+1,j})$ and $\tilde{\F}_{y,i,\jph} =\tilde{\F}_y(\con_{i,j},\con_{i,j+1})$ together with the $2^{nd}$-order central difference approximation of $\left(\frac{\p \con}{\p x}  \right)_{i,j}$ and $\left(\frac{\p \con}{\p y} \right)_{i,j}$, is $2^{nd}$-order accurate approximation of~\eqref{eq:oftt_ref_noncons} and entropy conservative i.e. 
	\begin{align}
		\label{eq:ent_eql_2nd_order}
		\frac{d}{d t} \ent\left(\con_{i, j}\right)+\frac{\tilde{\entf}_{x, i+\frac{1}{2}, j}-\tilde{\entf}_{x, i-\frac{1}{2}, j}}{\Delta x}+\frac{\tilde{\entf}_{y, i, j+\frac{1}{2}}-\tilde{\entf}_{y, i, j-\frac{1}{2}}}{\Delta y}=0,
	\end{align}
	holds. Here, the entropy fluxes $\tilde{\entf}_{x}$ and $\tilde{\entf}_{y}$ are defined as
	\begin{align*}
		\tilde{\entf}_{x, i+\frac{1}{2}, j}=\bar{\evar}_{i+\frac{1}{2}, j} \cdot \tilde{\F}_{x,i+\frac{1}{2}, j} - \bar{\mathcal{F}}_{x, i+\frac{1}{2}, j}~~\text{and}~~
		\tilde{\entf}_{y, i, j+\frac{1}{2}}=\bar{\evar}_{i, j+\frac{1}{2}} \cdot \tilde{\F}_{ y,i, j+\frac{1}{2}} - \bar{\mathcal{F}}_{y, i, j+\frac{1}{2}}
	\end{align*}
	which are consistent with~\eqref{entropy-pair}. 
	The proof follows from the entropy conservation property of the numerical fluxes and \eqref{eq:ent_noncons}.
\end{remark}

Following~\cite{leFloch2002}, we can construct $2p^{th}$-order accurate numerical fluxes using second-order fluxes with $p\in\mathbb{Z}^+$. For $p=2$, the $4^{th}$-order entropy-conservative fluxes $\tilde{\F}_{x,i+\frac{1}{2},j}^4$ and $\tilde{\F^4}_{y,i,j+\frac{1}{2}}$ are given as follows:
\begin{align}
	\tilde{\F}_{x,i+\frac{1}{2},j}^4&=\frac{4}{3}\tilde{\F}_{x}(\con_{i,j},\con_{i+1,j})-\frac{1}{6} \bigg( \tilde{\F}_{x} (\con_{i-1,j},\con_{i+1,j})+
	\tilde{\F}_{x}(\con_{i,j},\con_{i+2,j}) \bigg)\label{eq:4thorder_numflux_x}
\end{align}
and
\begin{align}
	\tilde{\F}_{y,i,j+\frac{1}{2}}^4&=\frac{4}{3}\tilde{\F}_{y}(\con_{i,j},\con_{i,j+1})-\frac{1}{6} \bigg( \tilde{\F}_{y} (\con_{i,j-1},\con_{i,j+1})+
	\tilde{\F}_{y}(\con_{i,j},\con_{i,j+2}) \bigg).\label{eq:4thorder_numflux_y}
\end{align}
\begin{remark}
	By replacing the $2^{nd}$-order fluxes with the $4^{th}$-order fluxes and using $4^{th}$-order central difference approximations to approximate  $\left(\frac{\p \con}{\p x}  \right)_{i,j}$ and $\left(\frac{\p \con}{\p y} \right)_{i,j}$ , the scheme ~\eqref{eq:semi-discrete_fd} is $4^{th}$-order entropy-conservative schemes i.e.
	\begin{align}
		\label{eq:ent_eql_4rth_order}
		\frac{d}{d t} \ent\left(\con_{i, j}\right)+\frac{\tilde{\entf}^4_{x, i+\frac{1}{2}, j}-\tilde{\entf}^4_{x, i-\frac{1}{2}, j}}{\Delta x}+\frac{\tilde{\entf}^4_{y, i, j+\frac{1}{2}}-\tilde{\entf}^4_{y, i, j-\frac{1}{2}}}{\Delta y}=0,
	\end{align}
	holds. Here, the numerical entropy fluxes $\tilde{\entf}^4_{x}$ and $\tilde{\entf}^4_{y}$ are which are consistent with entropy fluxes~\eqref{entropy-pair}. 
\end{remark}
\subsection{Higher-order entropy-stable schemes}
\label{subsec:ent_stable}
The schemes defined in Section~\ref{subsec:ent_cons} are designed to conserve entropy. However, in the presence of the discontinuities, it is necessary to have appropriate entropy decay; otherwise, the numerical solution will contain undesirable numerical oscillations. Following~\cite{tadmor2003entropy}, we modify the numerical flux to include dissipation terms as,
\begin{equation}
	\begin{aligned}
		{\hat{\F}}_{x,i+\frac{1}{2},j} =\tilde{\F}_{x,i+\frac{1}{2},j} - \frac{1}{2} \textbf{D}_{x,i+\frac{1}{2},j}[\![ \evar]\!]_{i+\frac{1}{2},j},~
		\quad
		{\hat{\F}}_{y,i,j+\frac{1}{2}} = \tilde{\F}_{y,i,j+\frac{1}{2}} - \frac{1}{2} \textbf{D}_{y,i,j+\frac{1}{2}}[\![ \evar]\!]_{i,j+\frac{1}{2}}.
		\label{es_numflux}
	\end{aligned}
\end{equation}
The diffusion matrices $\textbf{D}_{x,i+\frac{1}{2},j}$ and $\textbf{D}_{y,i,j+\frac{1}{2}}$ are symmetric positive definite and are based on Rusanov’s type diffusion operators, given by,
\begin{equation}  \label{diffusiontype}
	\textbf{D}_{x,i+\frac{1}{2},j} = \tilde{R}_{x,i+\frac{1}{2},j} \Lambda_{x,i+\frac{1}{2},j} \tilde{R}_{x,i+\frac{1}{2},j}^{\top}~\text{and}~\textbf{D}_{y,i,j+\frac{1}{2}} = \tilde{R}_{y,i,j+\frac{1}{2}} \Lambda_{y,i,j+\frac{1}{2}} \tilde{R}_{y,i,j+\frac{1}{2}}^{\top}.
\end{equation}
Here, $\tilde{R}_d,~d\in\left\{x,y\right\}$ are the matrices of entropy-scaled right eigenvectors (see~\cite{barth1999numerical}) presented in Appendix~\ref{Eiegn_vector_for_con}. Also, the matrices are ${\Lambda_d}$ are diagonal matrices, given by,
$${\Lambda_d}=\left( \max_{\lambda\in \tilde{\mathbf{\Lambda}}_d} |\lambda|\right) \mathbf{I}_{5 \times 5},$$
where, $\tilde{\mathbf{\Lambda}}_d$ is the set of eigenvalues of the flux Jacobian $\frac{\p \f_d}{\p \con}$ given in Appendix~\ref{Eiegn_vector_for_con}. 

Thus, the numerical scheme~\eqref{eq:semi-discrete_fd} with the modified flux~\eqref{es_numflux} is entropy stable. However, the first-order jump terms $[\![\evar]\!]_{i+\frac{1}{2},j}$ and $[\![\evar]\!]_{i,j+\frac{1}{2}}$ restricts the overall accuracy to first order. Replacing these jumps with higher-order polynomial reconstructions improves accuracy but does not guarantee entropy stability. To address this issue, we follow the approach proposed in~\cite{fjordholm2012arbitrarily} and describe the process in $x-$direction. Same process is followed for reconstruction in $y-$direction. Let us introduce {\em scaled entropy variables}
$$\mathcal{W}^{\pm}_{x,i,j}= \tilde{R}^{{\top}}_{x,i\pm\frac{1}{2},j}\evar_{i,j}.$$
Using ENO reconstruction procedure, we reconstruct $\mathcal{W}^{\pm}_{x,i,j}$ with the $k^{th}$-degree polynomials $\mathcal{P}^{\pm}_{x,i,j}(x)$ and define, 
\[\hat{\mathcal{W}}_{x,i,j}^{\pm}=\mathcal{P}^{\pm}_{x,i,j}(x_{i\pm\frac{1}{2}}).\]
Using these values, the reconstructed $\jump{\evar}_{\iph,j}$ of $k$-th order is defined as,
$$
\jump{\hat{\evar}}^k_{x,i,j} = \hat{\evar}^-_{x,i+1,j}-\hat{\evar}^+_{x,i,j}.
$$
where
$$
\hat{\evar}^{\pm}_{x,i+\frac{1}{2},j} =  \left\lbrace \tilde{R}^{{\top}}_{x,i\pm\frac{1}{2},j}\right\rbrace ^{(-1)}\hat{\mathcal{W}}_{x,i,j}^{\pm}.
$$
Following~\cite{fjordholm2012arbitrarily}, we define the higher-order entropy stable  numerical flux in the $x$-direction as follows:
\begin{equation}
	{\hat{\F}}^{k}_{x,i+\frac{1}{2},j}\,=\,\tilde{\F}^{2p}_{x,i+\frac{1}{2},j}\,-\,\frac{1}{2}\,\textbf{D}_{x,i+\frac{1}{2},j}[\![ \hat{\evar}]\!]_{x,i+\frac{1}{2},j}^k,
	\label{eq:entropy_stable_flux_x}
\end{equation}
where, $k$ denotes the order of the scheme and $p\in\mathbb{N}$ is given by
$$p = \begin{cases}
	\frac{k}{2}, & \textrm{if } k ~\text{is even,}\\
	\frac{k+1}{2}, & \textrm{if } k ~\text{is odd} .
\end{cases}$$ 
To ensure the entropy stability of the modified flux, the reconstruction procedure for $\mathcal{W}$ must satisfy the sign-preserving property, which is a sufficient condition for ensuring that the numerical flux~\eqref{eq:entropy_stable_flux_x} is entropy-stable. For the $2^{nd}$-order scheme, the MinMod reconstruction is used to ensure this property. For higher-order schemes i.e. $3^{rd}$-order $(k=3)$ and $4^{th}$-order $(k=4)$ schemes, we employ ENO-based reconstruction~\cite{fjordholm2013eno}, which ensures {\em sign-preserving property} of the reconstruction procedure. Following the same procedure in $y$-direction, we can define a higher numerical flux in the $y$-direction as follows:
\begin{equation}
	{\hat{\F}}^{k}_{y,i,j+\frac{1}{2}}\,=\,\tilde{\F}^{2p}_{y,i,j+\frac{1}{2}}\,-\,\frac{1}{2}\,\textbf{D}_{y,i,j+\frac{1}{2}}[\![ \hat{\evar}]\!]_{y,i,j+\frac{1}{2}}^k.
	\label{eq:entropy_stable_flux_y}
\end{equation}
Now we have the following result:
\begin{thm}[]\label{thm:ES_higher_order}
	The semi-discrete scheme \eqref{eq:semi-discrete_fd} with entropy stable fluxes~\eqref{eq:entropy_stable_flux_x},~\eqref{eq:entropy_stable_flux_y} and with $2^{nd}$-order (for $k=2$) and $4^{th}$-order (for $k=3,4$) central difference approximations to approximate $\left(\frac{\p \con}{\p x}\right)_{i,j}$ and $\left(\frac{\p \con}{\p y}\right)_{i,j},$  is $k^{th}$-order accurate and entropy stable, i.e. it satisfies,
	\begin{equation}
		\label{eq:semi-disc_ent_stab}
		\frac{d}{dt}  \ent(\con_{i,j})  +\frac{1}{\Delta x} \left( \hat{\entf}^k_{x,i+\frac{1}{2},j} - \hat{\entf}^k_{x,i-\frac{1}{2},j}\right)+\frac{1}{\Delta y}\left( \hat{\entf}^k_{y,i,j+\frac{1}{2}} - \hat{\entf}^k_{y,i,j-\frac{1}{2}}\right) \le 0,
	\end{equation}
	where $ \hat{\entf}^k_x$ and $ \hat{\entf}^k_y$ are given by,
	\begin{equation*}
		\begin{aligned}
			\hat{\entf}^k_{x,i+\frac{1}{2},j}=  \tilde{\entf}^{2p}_{x,i+\frac{1}{2},j} - \frac{1}{2}\bar{\evar}_{i+\frac{1}{2},j}^{\top}  \textbf{D}_{x,i+\frac{1}{2},j}[\![ \hat{\evar}]\!]^k_{x,i+\frac{1}{2},j}
	\end{aligned} \end{equation*}
	
	and \begin{equation*}
		\begin{aligned}
			\hat{\entf}^k_{y,i,j+\frac{1}{2}}=    \tilde{\entf}^{2p}_{y,i,j+\frac{1}{2}} -  \frac{1}{2}\bar{\evar}_{i,j+\frac{1}{2}}^{\top}  \textbf{D}_{y,i,j+\frac{1}{2}}[\![ \hat{\evar}]\!]^k_{y,i,j+\frac{1}{2}}.\end{aligned} \end{equation*}
\end{thm}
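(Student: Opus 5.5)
The plan is to take the inner product of the semi-discrete scheme \eqref{eq:semi-discrete_fd} with the entropy variable $\evar_{i,j}$, so that $\evar_{i,j}^\top \frac{d\con_{i,j}}{dt} = \frac{d}{dt}\ent(\con_{i,j})$ by the chain rule. The non-conservative contributions then drop out pointwise. This is because the scheme evaluates $\bc_d$ at the same node $\con_{i,j}$ at which $\evar_{i,j}$ is taken, so \eqref{eq:ent_noncons} gives $\evar_{i,j}^\top\bc_x(\con_{i,j})(\p_x\con)_{i,j}=0$ whatever central difference is used for the derivative. The order of the central difference ($2$ or $4$) therefore matters only for accuracy, not for the entropy estimate. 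The problem reduces to the purely conservative part, and I can treat the $x$- and $y$-directions separately and add them.

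For the conservative part, I split each flux as $\hat{\F}^k = \tilde{\F}^{2p} - \tfrac12 \textbf{D}\jump{\hat\evar}^k$. For the entropy-conservative piece I use the remarks above. Since the non-conservative terms vanish against $\evar_{i,j}$, they already give
\[
-\evar_{i,j}^\top\bigl(\tilde{\F}^{2p}_{x,\iph,j}-\tilde{\F}^{2p}_{x,i-\frac12,j}\bigr) = -\bigl(\tilde{\entf}^{2p}_{x,\iph,j}-\tilde{\entf}^{2p}_{x,i-\frac12,j}\bigr).
\]
For $p=1$ this is the Tadmor identity \eqref{eq:consrvative_flux}, written with $\evar_{i,j}=\bar\evar_{\iph,j}-\tfrac12\jump{\evar}_{\iph,j}$. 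For $p=2$ it follows from the LeFloch--Mercier--Rohde linear combination \eqref{eq:4thorder_numflux_x}.

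For the dissipative piece, I write $\evar_{i,j} = \bar\evar_{\iph,j} - \tfrac12\jump{\evar}_{\iph,j} = \bar\evar_{i-\frac12,j}+\tfrac12\jump{\evar}_{i-\frac12,j}$. This turns the difference of $\tfrac12\evar_{i,j}^\top \textbf{D}\jump{\hat\evar}^k$ across the two faces into the difference of the numerical entropy fluxes $-\tfrac12\bar\evar^\top\textbf{D}\jump{\hat\evar}^k$ defined in the theorem. What is left over is the face-wise residual
\[
-\tfrac14\Bigl(\jump{\evar}^\top_{\iph,j}\textbf{D}_{x,\iph,j}\jump{\hat\evar}^k_{x,\iph,j}+\jump{\evar}^\top_{i-\frac12,j}\textbf{D}_{x,i-\frac12,j}\jump{\hat\evar}^k_{x,i-\frac12,j}\Bigr),
\]
and it remains to show each term in the bracket is nonnegative.

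This sign step is the main obstacle. Using $\textbf{D}=\tilde R\Lambda\tilde R^\top$ with $\Lambda=\lambda_{\max}\mathbf I\ge0$, the face term becomes $\lambda_{\max}\,(\tilde R^\top\jump{\evar})\cdot(\tilde R^\top\jump{\hat\evar}^k)$. By the construction of $\hat\evar^\pm$ through the scaled variables $\mathcal W^\pm = \tilde R^\top\evar$, this is $\lambda_{\max}\sum_m \jump{\mathcal W^{(m)}}\,\jump{\hat{\mathcal W}^{(m)}}$. The sign-preserving property of the MinMod reconstruction (for $k=2$) and the ENO reconstruction of \cite{fjordholm2013eno} (for $k=3,4$) states that $\jump{\hat{\mathcal W}^{(m)}}$ has the same sign as $\jump{\mathcal W^{(m)}}$, or vanishes. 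Each summand is therefore nonnegative, which yields \eqref{eq:semi-disc_ent_stab}.

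I need to be careful that the scaling matrix $\tilde R_{x,\iph,j}$ used in the reconstruction is the same one appearing in $\textbf{D}_{x,\iph,j}$ at that face, so that both jumps are expressed in the same scaled basis.

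Accuracy follows from three facts. The flux $\tilde{\F}^{2p}$ is $2p$-th order accurate. The reconstructed jump is $O(\Delta x^k)$, so the dissipation contributes $O(\Delta x^k)$ after differencing. The central differences for the non-conservative terms are of order at least $k$. This gives overall order $k$.
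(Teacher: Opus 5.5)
Your proof is correct and follows the argument the paper relies on. The paper gives no written proof of this theorem, but it points to the same ingredients you use: \eqref{eq:ent_noncons} evaluated at the node $\con_{i,j}$ to remove the non-conservative terms, Tadmor's identity and the LeFloch--Mercier--Rohde combination for the entropy-conservative part, and the sign property of the MinMod/ENO reconstruction of the face-scaled entropy variables (following Fjordholm--Mishra--Tadmor) for the dissipative part. Two small points: your face-wise residual is missing a factor $1/\Delta x$, and your $k$-th order accuracy claim, like the paper's, is only formal because it assumes the reconstructed jumps vary smoothly from face to face.
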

are consistent numerical entropy fluxes with continuous entropy fluxes $\entf_x$ and $\entf_y,$ respectively.
\section{Fully discrete schemes}\label{sec:Fully_discrete}
The semi-discrete scheme~\eqref{eq:semi-discrete_fd} can be expressed as follows:
\begin{equation}
	\frac{d }{dt}\con_{i,j}(t) = \mathcal{L}_{i,j}(\con(t))
	\label{fullydiscrete}
\end{equation}
where,\\
\scalebox{0.9}{%
	\begin{minipage}{1.1\linewidth} 
		\begin{align*}
			\mathcal{L}_{i,j}(\con(t))=-\frac{\F_{x, i+\frac{1}{2}, j}-\F_{x, i-\frac{1}{2}, j}}{\Delta x}-\frac{\F_{y, i, j+\frac{1}{2}}-\F_{y, i, j-\frac{1}{2}}}{\Delta y} -\bc_x(\con_{i,j})\left(\frac{\p \con}{\p x} \right)_{i,j} -\bc_y(\con_{i,j}) \left(\frac{\p \con}{\p y} \right)_{i,j}
		\end{align*}
	\end{minipage}%
}\vspace{0.5cm}\\
Here $\con$ is a grid function. Following \cite{gottlieb2001strong}, we use explicit strong stability-preserving Runge-Kutta (SSP-RK) methods for the time discretizations. Let $\con^n_{i,j}$ be the solution at time level $t^n$, with time step $\Delta t = t^{n+1}-t^n$. Then to update the solutions by one time step, the $2^{nd}$ and $3^{rd}$-order accurate SSP–RK schemes are given as follows:
\begin{enumerate}
	\item Set $\con^{0}_{i,j} \ = \ \con^n_{i,j}$.
	\item Compute
	{\small
		\begin{eqnarray*}
			\con_{i,j}^{(k)} \
			= \
			\sum_{l=0}^{k-1}\mu_{kl}\con_{i,j}^{(l)}
			+
			\nu_{kl}\Delta t \big(\mathcal{L}_{i,j}(\con^{(l)})\big),~~~~~~\text{For}~k\in\{1,\dots,m+1\}
	\end{eqnarray*}}
	where $\mu_{kl}$ and $\nu_{kl}$ are given in Table~\eqref{table:ssp}.
	\item Finally, $\con_{i,j}^{n+1} \ =  \ \con_{i,j}^{(m+1)}$.
\end{enumerate}
\begin{table}[h]
	\centering
	\begin{tabular}{l|ccc|ccc}
		\hline
		Order & \hspace{2.0cm}$\mu_{kl}$ & & & \hspace{2.0cm}$\nu_{kl}$ & & \\
		\hline
		2 & 1 & & & 1 & & \\
		& 1/2 & 1/2 & & 0 & 1/2 & \\
		\hline
		3 & 1 & & & 1 & & \\
		& 3/4 & 1/4 & & 0 & 1/4 & \\
		& 1/3 & 0 & 2/3 & 0 & 0 & 2/3 \\
		\hline
	\end{tabular}
	\caption{Coefficients for explicit SSP-Runge-Kutta schemes}
	\label{table:ssp}
\end{table}
The $4^{th}$-order SSP-RK scheme \cite{gottlieb2001strong} is given by, 
\begin{subequations}
	\begin{align*}
		\con^{(1)} &= \textbf{U}^n + 0.39175222700392 \Delta t \big(\mathcal{L}(\con^n) \big)\\
		\con^{(2)} &= 0.44437049406734 \con^n + 0.55562950593266 \con^{(1)} +0.36841059262959  \Delta t \big( \mathcal{L}(\con^1)\big)\\
		\con^{(3)} &= 0.62010185138540 \con^n + 0.37989814861460 \con^{(2)} +0.25189177424738  \Delta t \big( \mathcal{L}(\con^2)\big)\\
		\con^{(4)} &= 0.17807995410773 \con^n + 0.82192004589227 \con^{(3)} + 0.54497475021237  \Delta t \big( \mathcal{L}(\con^3) \big)\\
		\con^{n+1} &= 0.00683325884039 \con^n + 0.51723167208978 \con^{(2)} + 0.12759831133288 \con^{(3)}\\
		&+ 0.34833675773694 \con^{(4)}+ 0.08460416338212  \Delta t \big( \mathcal{L}(\con^3) \big)+ 0.22600748319395  \Delta t \big( \mathcal{L}(\con^4) \big).
	\end{align*}	
\end{subequations}
Here, we have ignored the subscript for each cell for simplicity. Using the time discretizations, we now denote the compatible discretizations as follows:

\begin{table}[h!]
	\centering\small
	\label{tab:schemes}
	\begin{tabular}{|c|c|c|c|}
		\hline
		\textbf{Scheme} & \textbf{Entropy Stable Scheme} & \textbf{$\left(\frac{\p \con}{\p x}\right)_{i,j}$ and $\left(\frac{\p \con}{\p y}\right)_{i,j}$ Approx.} & \textbf{Time Integration}  \\ \hline
		$\ote$ & $2^{nd}$-order & $2^{nd}$-order central difference& SSP-RK2  \\ \hline
		$\othe$  & $3^{rd}$-order & $4^{th}$-order central difference& SSP-RK3  \\ \hline
		$\ofe$ & $4^{th}$-order & $4^{th}$-order central difference & SSP-RK4  \\ \hline
	\end{tabular}
	\caption{Description of fully discrete numerical schemes}
\end{table}
\section{Numerical Results}
\label{sec:NR}
To calculate the time step, we use
$$
\Delta t = \text{CFL} \frac{1}{\max_{i,j} \left( \frac{\lambda_x(\con_{i,j})}{\Delta x} + \frac{\lambda_y(\con_{i,j})}{\Delta y} \right)},
$$
where,

$${\lambda_d}(\con_{i,j})=\max_{\lambda\in {\mathbf{\Lambda}}_d}, |\lambda(\con_{i,j})|,\qquad d\in\{x,y\} $$
are the absolute values of the maximum eigenvalues in each direction. For one-dimensional test cases, we take $\text{CFL} = 0.475$ for all schemes. For two-dimensional test cases, we take $\text{CFL} = 0.3$ for all schemes.
To measure the total entropy production at every time step, we also compute the total entropy decay at each time step given by,
\begin{align}
	\label{eq:tot_ent_exp}
	\sum_{i,j} \Big[\ent^{n+1}(\con_{i,j}) - \ent^{n}(\con_{i,j})-\frac{\Delta t}{\Delta x} \left( \hat{\entf}^k_{x,i+\frac{1}{2},j} - \hat{\entf}^k_{x,i-\frac{1}{2},j}\right)
	-\frac{\Delta t}{\Delta y}\left( \hat{\entf}^k_{y,i,j+\frac{1}{2}} - \hat{\entf}^k_{y,i,j-\frac{1}{2}}\right)\Big].
\end{align}
This will test the consistency of the numerical results with~\eqref{eq:semi-disc_ent_stab}. We will now present the one and two-dimensional test cases.
\subsection{One-dimensional accuracy test-I}
\label{test:liu_2025}
To verify the accuracy and convergence rate of the proposed numerical schemes, we consider a smooth problem defined in~\cite{liu2025entropy}, which is essentially a scalar problem and has only one non-trivial component, namely $\rho$. The initial conditions are,
\begin{align*}
	\rho(x,0) &= 1+0.2 \sin{(x)},\\
	\left(\bu,\pe,\pii\right)& = \left(1,0,2,2\right),
\end{align*}
with $\gamma_e = \gamma_i = 1.4$. We consider the computational domain $[0,2\pi]$ and use periodic boundary conditions. The exact solution for the problem is $\rho(x,t) = 1+0.2 \sin{(x-t)}$. 
\begin{table}[tbhp]
	\footnotesize
	\caption{\textbf{\nameref{test:liu_2025}}: $L_1$ errors and order of accuracy for $\rho$.}
	\begin{center}
		\begin{tabular}{c|c|c|c|c|c|c|}
			\hline Number of cells  & \multicolumn{2}{|c}{$\ote$} & \multicolumn{2}{|c}{$\othe$} & \multicolumn{2}{|c}{$\ofe$}  \\
			\hline   & $L_1$ error  &  Order &  $L_1$ error      & Order & $L_1$ error      & Order \\
			\hline 20 & 1.57E-02 & -- & 6.01E-03 & -- & 1.49E-03 & -- \\
			40 & 6.78E-03 & 1.209554054 & 7.65E-04 & 2.973502425 & 1.22E-04 & 3.615641272 \\
			80 & 2.02E-03 & 1.751322546 & 9.60E-05 & 2.994715944 & 8.80E-06 & 3.792707546 \\
			160 & 5.59E-04 & 1.849905978 & 1.20E-05 & 2.998908566 & 6.07E-07 & 3.85875165 \\
			320 & 1.53E-04 & 1.87285767 & 1.50E-06 & 2.99964837 & 4.11E-08 & 3.882974064 \\
			640 & 4.08E-05 & 1.902504771 & 1.88E-07 & 2.999930068 & 2.73E-09 & 3.910359146 \\
			\hline
		\end{tabular}
	\end{center}
	\label{table1}
\end{table}

In Table\eqref{table1}, we have presented the $L_1$-errors for density using the $\ote$, $\othe$ and $\ofe$ schemes at the final time of $t=1.3$. We observe that all the schemes have achieved the theoretically predicted order of accuracy. 

\subsection{One-dimensional accuracy test-II}\label{test:AWENO_2025}
In this test case from~\cite{zhao2025fifth}, we consider the computational domain of $[0,2],$ with periodic boundary conditions. The initial conditions are given by,
\begin{align*}
	\left(\rho,\bu,\pe,\pii\right)& = \left(1 + 0.1 \sin{\pi x}, 1, 0, 0.6 + 0.1 \sin{\pi x}, 0.4 - 0.1 \sin{\pi x}\right).
\end{align*}
with $\gamma_e = \gamma_i = \frac{5}{3}$. 
\begin{table}[tbhp]
	\footnotesize
	\caption{\textbf{\nameref{test:AWENO_2025}}: $L_1$-errors and order of accuracy for $\pe$ and $\pii$.}
	\label{tab:2}
	\begin{center}
		\begin{tabular}{c|c|c|c|c|c|c|}
			
			\hline
			
			\multicolumn{7}{c}{\textbf{$L_1$-errors and order of accuracy for $\pe$}} \\
			\hline
			
			Number of cells  & \multicolumn{2}{c|}{$\ote$} & \multicolumn{2}{c|}{$\othe$} & \multicolumn{2}{c|}{$\ofe$}  \\
			\hline
			& $L_1$-error  & Order & $L_1$-error & Order & $L_1$-error & Order \\
			\hline
			
			40 & 5.16E-03 & -- & 1.33E-04 & -- & 2.44E-05 & -- \\
			80 & 1.71E-03 & 1.592660617 & 1.66E-05 & 2.997894069 & 1.86E-06 & 3.711973538\\
			160 & 4.75E-04 & 1.848363303 & 2.07E-06 & 2.999790989 & 1.31E-07 & 3.830772838\\
			320 & 1.29E-04 & 1.884148797 & 2.59E-07 & 3.000067743 & 9.05E-09 & 3.852261967\\
			640 & 3.48E-05 & 1.888114719 & 3.24E-08 & 3.000102719 & 6.22E-10 & 3.863408653\\
			\hline
			\multicolumn{7}{c}{\textbf{$L_1$-errors and order of accuracy for $\pii$}} \\
			\hline
			Number of cells  & \multicolumn{2}{c|}{$\ote$} & \multicolumn{2}{c|}{$\othe$} & \multicolumn{2}{c|}{$\ofe$}  \\
			\hline
			& $L_1$-error  & Order & $L_1$-error & Order & $L_1$-error & Order \\
			\hline
			
			40 & 5.13E-03 & -- & 1.41E-04 & -- & 2.46E-05 & -- \\
			80 & 1.72E-03 & 1.574981354 & 1.78E-05 & 2.98947927 & 1.86E-06 & 3.7199136\\
			160 & 4.77E-04 & 1.851188296 & 2.22E-06 & 2.997912249 & 1.32E-07 & 3.824447721\\
			320 & 1.29E-04 & 1.882149615 & 2.78E-07 & 2.999345868 & 9.10E-09 & 3.853476906\\
			640 & 3.49E-05 & 1.88953533 & 3.48E-08 & 2.999805725 & 6.24E-10 & 3.865837857\\
			\hline
		\end{tabular}
	\end{center}
\end{table}

In Table\eqref{tab:2}, we have presented the $L_1$-errors of electron and ion pressures, $\pe$ and $\pii,$ respectively, at the final simulation time $t=1.0$, using different numerical schemes. Again, we observe that all the schemes have achieved the desired order of accuracy.

\subsection{Double rarefaction Riemann problem}
\label{test:double_rarefaction}
In this test case, we consider first one-dimensional Riemann problem from~\cite{cheng2024high}, where solution is consist of two rarefaction waves. We consider the computational domain of $[-5,5]$ with Neumann boundary conditions. The initial states are separated by a discontinuity at $x=0$ and are given as follows:
\[(\rho, \bu, \pe, \pii) = \begin{cases}
	(2, -1, 0, 0.6, 0.4), & \textrm{if } x\leq 0\\
	(2, 1, 0, 0.4, 0.6), & \textrm{otherwise,}
\end{cases}\]
with $\gamma_e = 1.4$ and $\gamma_i = 1.67$. We compute the solutions using $2000$ cells till the final simulation time of $t=2.0$.
\begin{figure}[!htbp]
	\begin{center}
		\subfigure[Density $\rho$]{\includegraphics[width=0.45\textwidth]{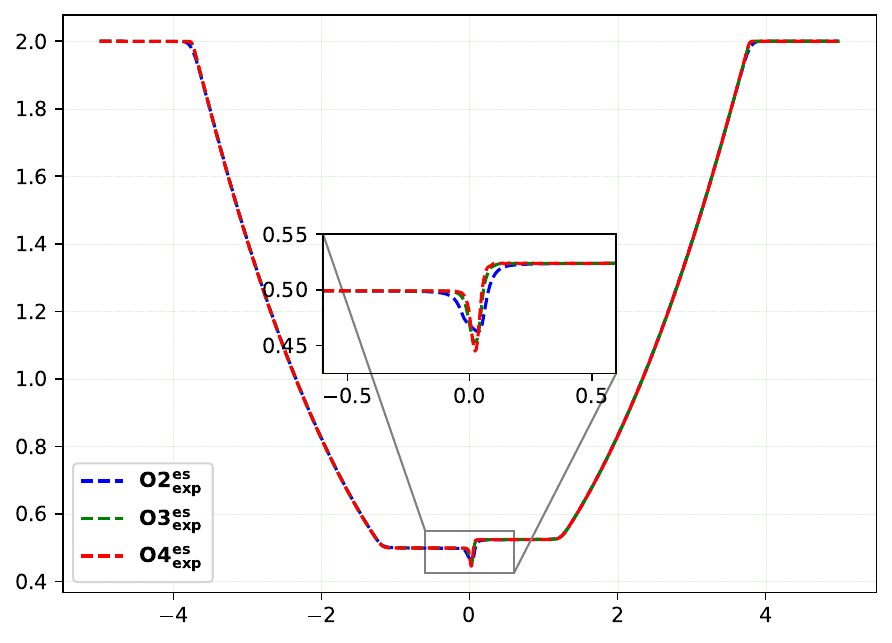} \label{fig:dr_rho}}~
		\subfigure[Electron Pressure $\pe$]{\includegraphics[width=0.45\textwidth]{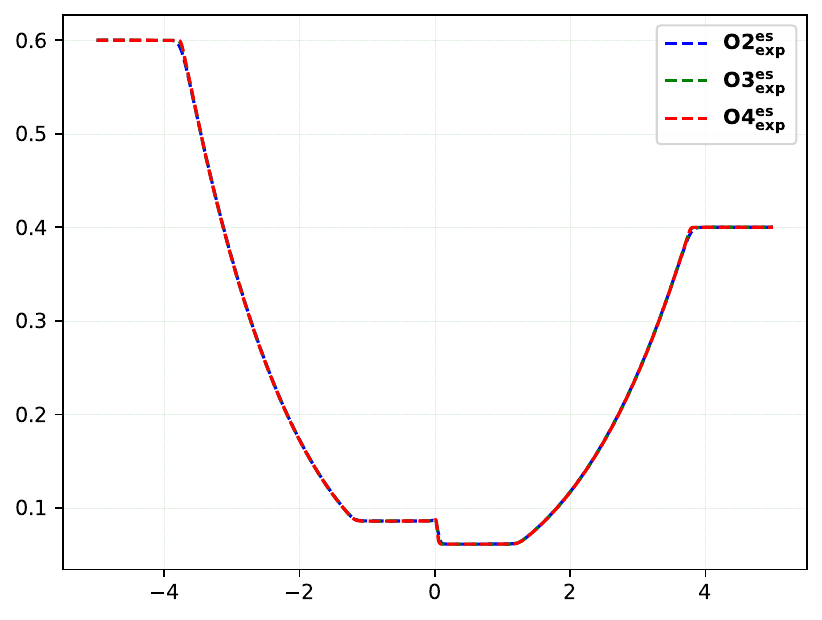}\label{fig:dr_pe}}\\
		\subfigure[Ion Pressure $\pii$]{\includegraphics[width=0.45\textwidth]{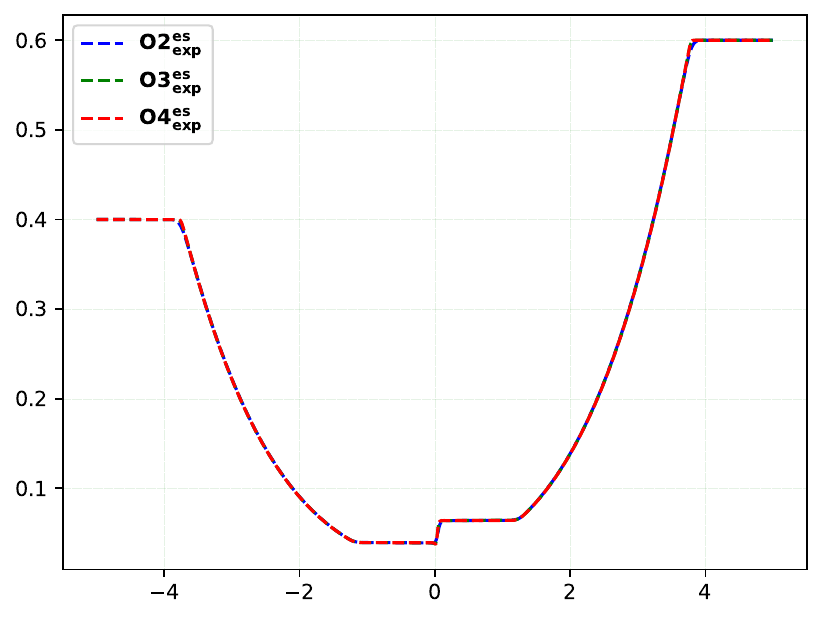}\label{fig:dr_pi}}
		\subfigure[Total entropy change with time]{\includegraphics[width=0.45\textwidth]{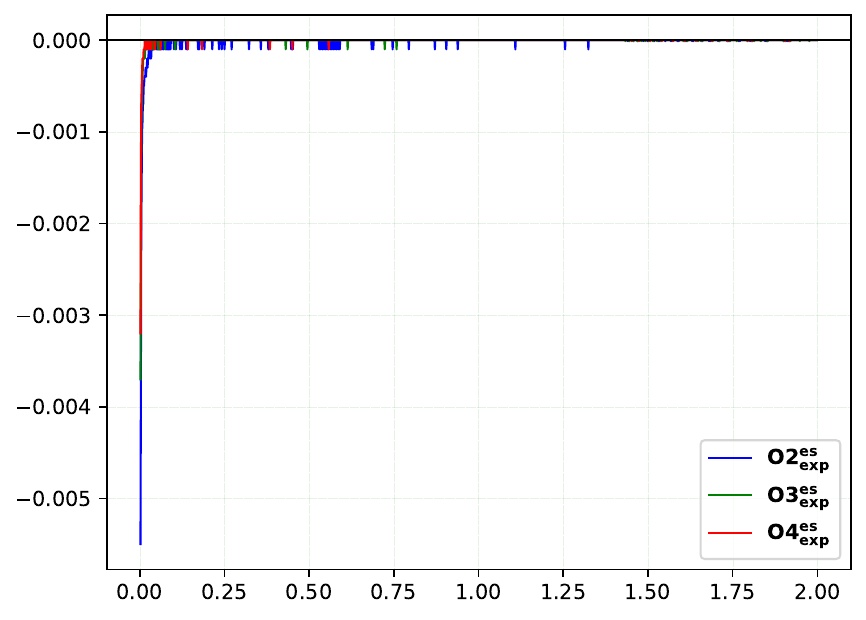}\label{fig:dr_ent}}
		\caption{\textbf{\nameref{test:double_rarefaction}}: Plots of density, electron pressure, and ion pressure at $t = 2.0$ using $2000$ cells. We have also plotted total entropy change with time.}
		\label{fig:dr}
	\end{center}    
\end{figure}

Numerical results for the $\ote$, $\othe$ and $\ofe$ schemes are presented in Figure~\eqref{fig:dr}. We have plotted density, electron, and ion pressures and total entropy decay at each time step. We observe that all the waves are resolved. Also, we note that the second order scheme $\ote$ more diffusive than the third ($\othe$) and fourth ($\ofe$) order schemes. The results are similar to those presented in~\cite{cheng2024high}. In Figure~\eqref{fig:dr_ent}, we have plotted the total entropy change (see ~\eqref{eq:tot_ent_exp}) at every time step. We observe that due the presence of an initial discontinuity, we have large entropy decay initially. However, after that, the entropy decay is almost negligible as no new waves are generated.
\subsection{Sod Riemann problem}\label{test:sod}
In this test from~\cite{cheng2024high}, we again consider a Riemann problem on the computational domain of $[-5,5]$, with Neumann boundary conditions. The initial conditions are given by,
\[(\rho, \bu, \pe, \pii) = \begin{cases}
	(1, 0, 0, 0.4, 0.6), & \textrm{if } x\leq 0\\
	(0.125, 0, 0, 0.06, 0.04), & \textrm{otherwise}
\end{cases}\]
with $\gamma_e = \gamma_i = 1.4$. We use $2000$ cells and compute till the final time $t=2.0.$
\begin{figure}[!htbp]
	\begin{center}
		\subfigure[Density ($\rho$)]{\includegraphics[width=0.45\textwidth]{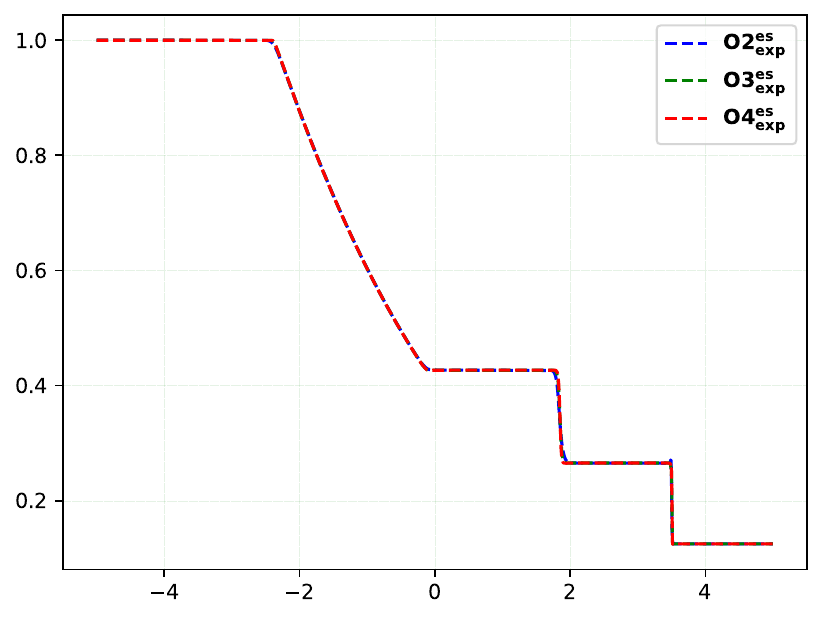} \label{fig:sod_rho}}~
		\subfigure[Electron pressure ($\pe$)]{\includegraphics[width=0.45\textwidth]{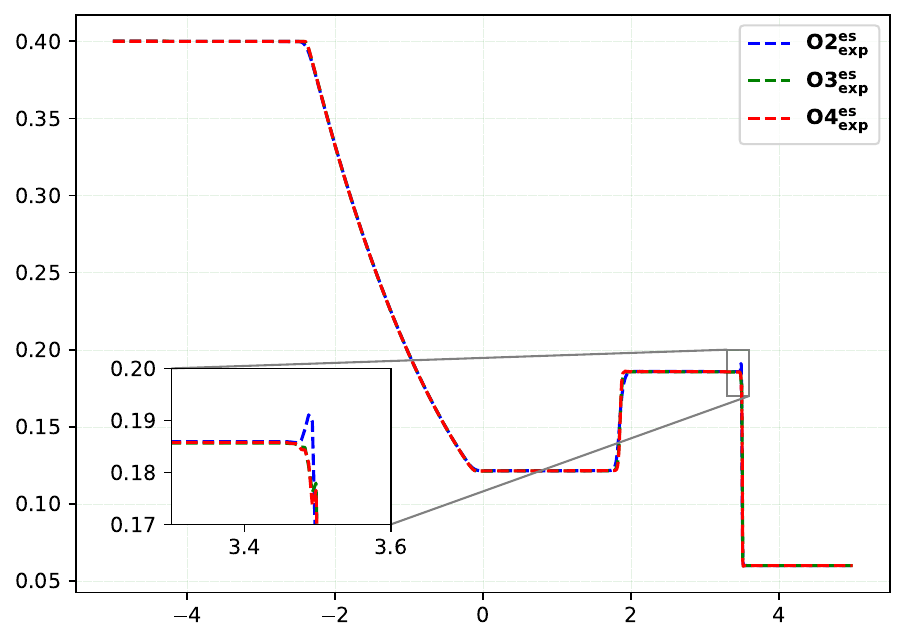}\label{fig:sod_pe}}\\
		\subfigure[Ion pressure ($\pii$)]{\includegraphics[width=0.45\textwidth]{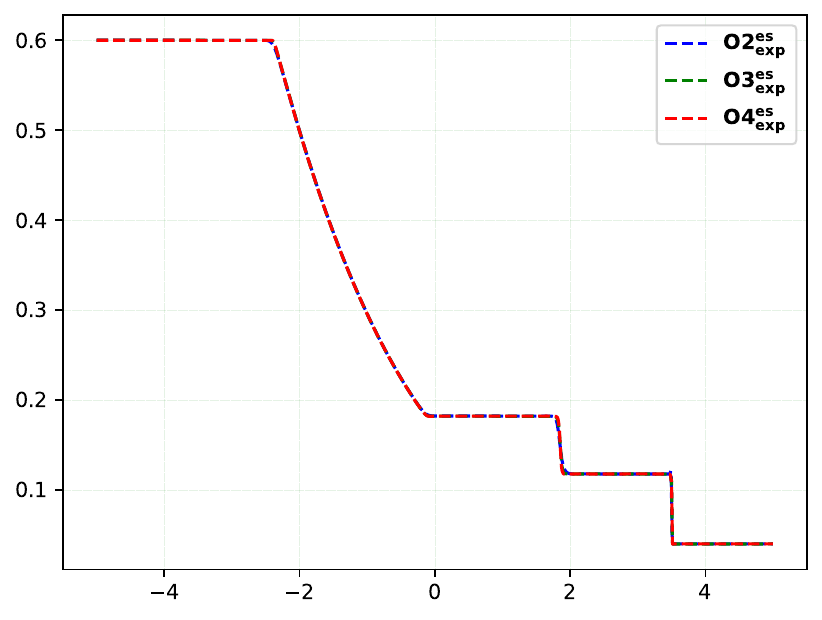}\label{fig:sod_pi}}
		\subfigure[Total entropy change with time]{\includegraphics[width=0.45\textwidth]{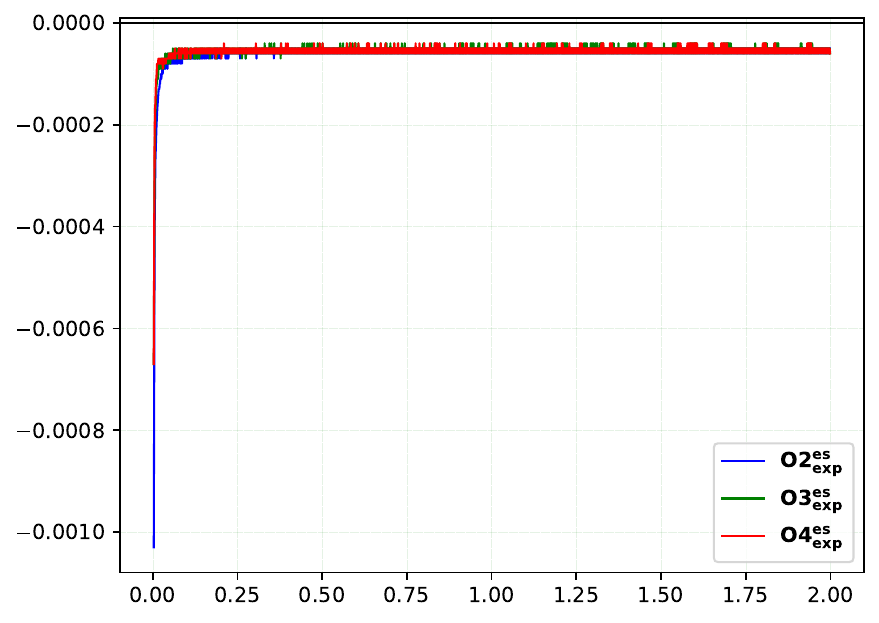}\label{fig:sod_ent}}
		\caption{\textbf{\nameref{test:sod}}: Plots of density, electron pressure, and ion pressure at $t = 2.0$ using $2000$ cells. We have also plotted total entropy change with time.}
		\label{fig:sod}
	\end{center}    
\end{figure}

We have presented numerical results in Fig.\eqref{fig:sod} for the $\ote$, $\othe$ and $\ofe$ schemes at time $t=2.0$ using $2000$ cells. We have plotted density, electron pressure, ion pressure and total entropy change with time. We observe that all the schemes are able to resolve the shocks, contact and rarefaction waves efficiently. We do see small oscillations in the second-order scheme $\ote$ but they are stable in the sense that they do not increase with refinement. Furthermore, higher order schemes $\othe$ and $\ofe$ do not have any oscillations. In addition, from the entropy decay plot we see that all the schemes are entropy stable and continue to have some entropy decay due to the presence of a shock in the solution. We also observe that $\ote$ is slightly more diffusive than the $\othe$ and $\ofe$ schemes.

\subsection{Lax Riemann problem}
\label{test:lax}
In another test case from ~\cite{cheng2024high}, we consider a Riemann problem on the computational domain of $[-5,5]$ with Neumann boundary conditions. The initial conditions are given by,
\[(\rho, \bu, \pe, \pii) = \begin{cases}
	(0.445, 0.689, 0, 1.764, 1.764), & \textrm{if } x\leq 0,\\
	(0.5, 0, 0, 0.2855, 0.2855), & \textrm{otherwise},
\end{cases}\]
with $\gamma_e = 1.4$ and $\gamma_i = 1.67$.  
\begin{figure}[!htbp]
	\begin{center}
		\subfigure[Density ($\rho$)]{\includegraphics[width=0.45\textwidth]{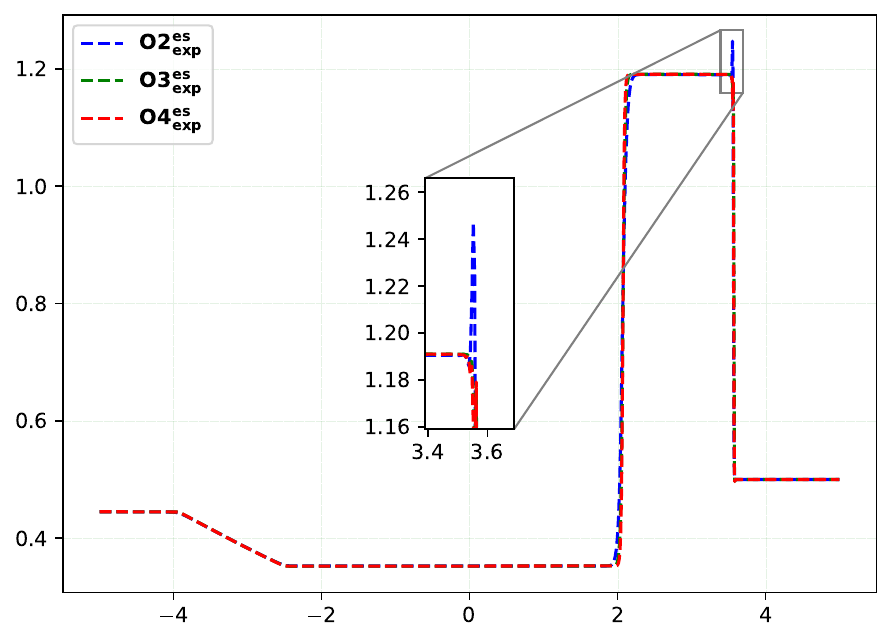} \label{fig:lax_rho}}~
		\subfigure[Electron pressure ($\pe$)]{\includegraphics[width=0.45\textwidth]{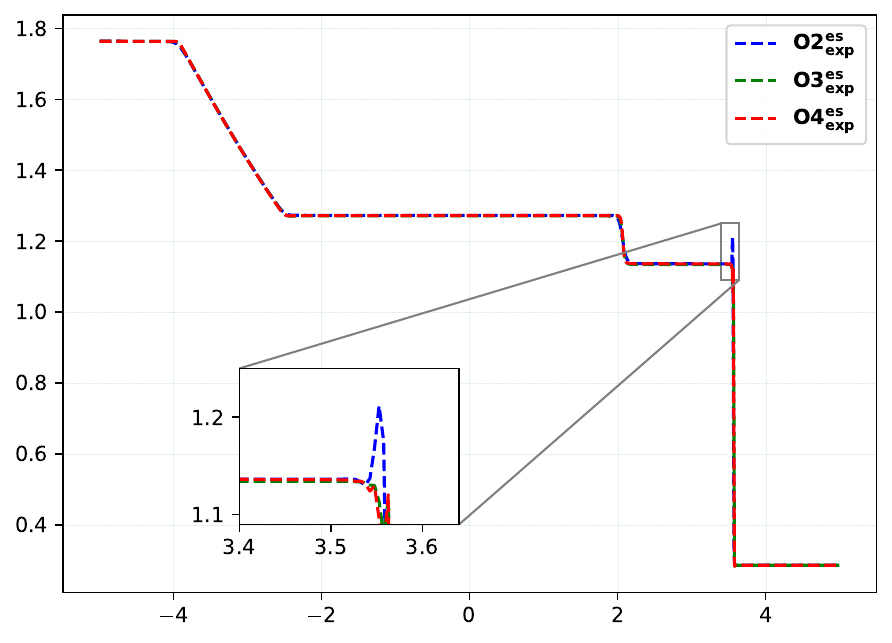}\label{fig:lax_pe}}\\
		\subfigure[Ion pressure ($\pii$)]{\includegraphics[width=0.45\textwidth]{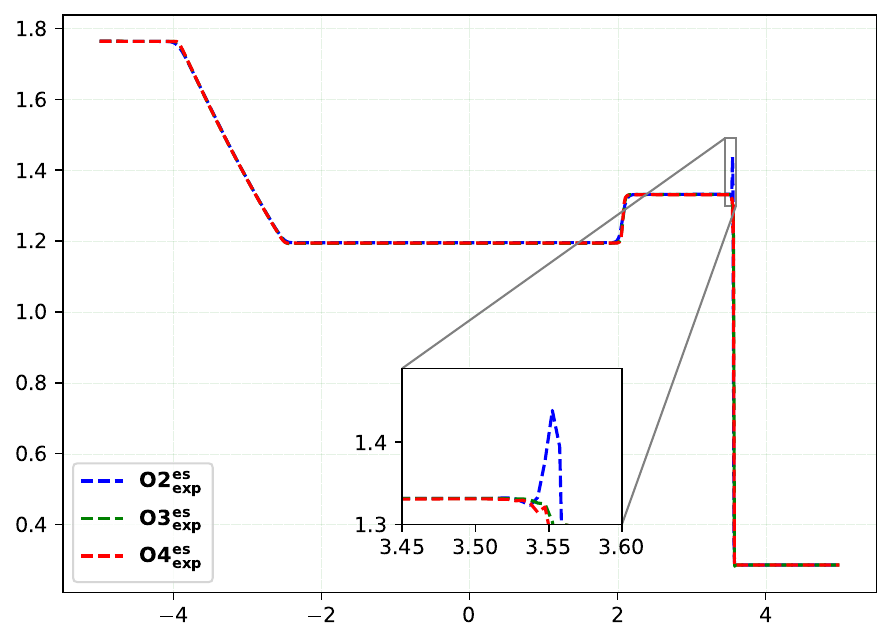}\label{fig:lax_pi}}
		\subfigure[Total entropy change with time]{\includegraphics[width=0.45\textwidth]{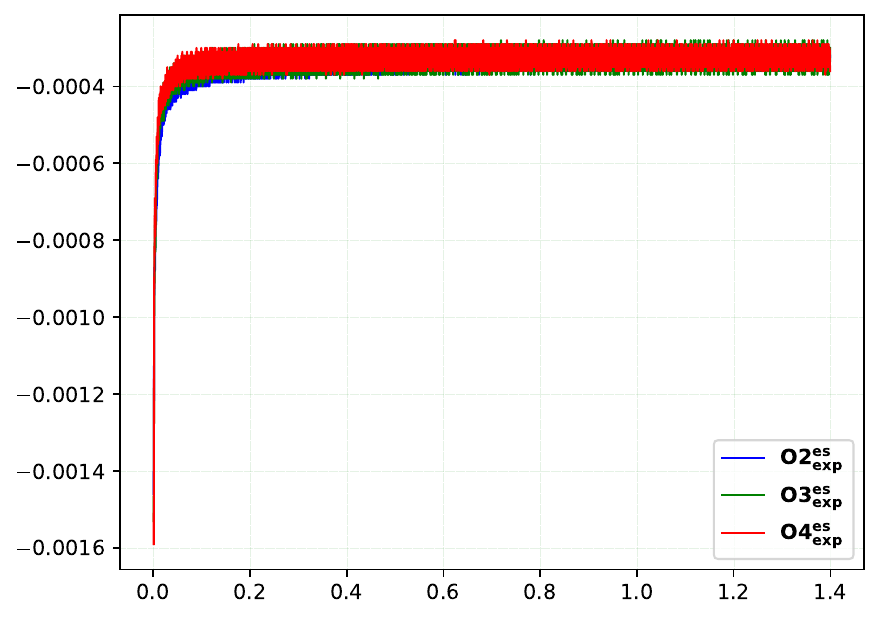}\label{fig:lax_ent}}
		\caption{\textbf{\nameref{test:lax}}: Plots of density, electron pressure, and ion pressure at $t = 2.0$ using $2000$ cells. We have also plotted total entropy change with time.}
		\label{fig:lax}
	\end{center}    
\end{figure}
We have presented numerical results in Fig.\eqref{fig:lax} for the $\ote$, $\othe$, and $\ofe$ schemes at the final time of $t=1.4$ on $2000$ cells. The density, electron pressure, ion pressure, and change in total entropy with time are plotted in Figures~\eqref{fig:lax_rho},~\eqref{fig:lax_pe}, \eqref{fig:lax_pi} and ~\eqref{fig:lax_ent}, respectively. We again see that all the waves are resolved and $\ote$ being most diffusive. We also observe small oscillations with $\ote$ but they are stable with respect to refinement. We do not observe any oscillations in $\othe$ and $\ofe$ schemes. In Figure~\eqref{fig:lax_ent}, we note that total entropy decays at every time step. We also see that total entropy decay is almost constant after initial changes. Furthermore, $\ote$ decays more entropy when compared with $\othe$ and $\ofe$.
\subsection{Two-dimensional accuracy test}\label{test:liu_2025_2D}
To verify the accuracy and convergence rate of our numerical schemes, we consider a smooth problem defined in~\cite{liu2025entropy}, which is essentially a scalar problem and has only one non-trivial component $\rho$. We convert this problem into a two-dimensional test problem. The problem is initialized as follows:
\begin{align*}
	\rho(x,y,0) &= 1+0.2 \sin{(x+y)},\\
	\left(\bu,\pe,\pii\right)& = \left(0.5,0.5,2,2\right).
\end{align*}
with $\gamma_e = \gamma_i = 1.4$. We use periodic boundary conditions on the domain $(x,y)\in[0,2\pi]\times[0,2\pi]$. For this setup, the exact solution is $\rho(x,t) = 1+0.2 \sin{(x+y-t)}$. Table\eqref{tab:3} presents the $L_1$-errors of density for the $\ote$, $\othe$ and $\ofe$ schemes at time $t=1.3$. We observe that each scheme achieves the expected order of accuracy. 

\begin{table}[tbhp]
	\footnotesize
	\caption{\textbf{\nameref{test:liu_2025_2D}}: $L_1$ errors and order of accuracy for $\rho$.}
	\label{tab:3}
	\begin{center}
		\begin{tabular}{c|c|c|c|c|c|c|}
			\hline Number of cells  & \multicolumn{2}{|c}{$\ote$} & \multicolumn{2}{|c}{$\othe$} & \multicolumn{2}{|c}{$\ofe$}  \\
			\hline   & $L_1$ error  &  Order &  $L_1$ error      & Order & $L_1$ error      & Order \\
			\hline 24 $\times$ 24 & 1.26E-02 & -- & 7.00E-04 & -- & 1.60E-04 & -- \\
			48 $\times$ 48 & 5.03E-03 & 1.318732756 & 8.86E-05 & 2.982213119 & 1.22E-05 & 3.714219366 \\
			96 $\times$ 96 & 1.45E-03 & 1.791429385 & 1.11E-05 & 2.996847801 & 8.72E-07 & 3.801274782 \\
			192 $\times$ 192 & 3.97E-04 & 1.873327563 & 1.39E-06 & 2.998557202 & 6.01E-08 & 3.860152037 \\
			384$\times$ 384 & 1.08E-04 & 1.874857877 & 1.74E-07 & 2.999868426 & 4.02E-09 & 3.900473544 \\
			\hline
		\end{tabular}
	\end{center}
\end{table} 
\subsection{Two-dimensional Riemann problem}\label{test:RP_2D_1} In this test case, we consider a two-dimensional Riemann problem from ~\cite{cheng2024high}, which is motivated from the similar problem for the Euler equation of compressible flow in~\cite{lax1998solution,pan2017few}. We consider a computational domain of $[0,1]\times[0,1]$ with Neumann boundary conditions. The initial conditions are given by,
\[\left(\rho, \bu, \pe, \pii\right)  = \begin{cases}
	(1.5, 0, 0, 0.75, 0.75), & \textrm{if } x>0.8,~y>0.8\\
	(0.5323, 1.206, 0, 0.15, 0.15), & \textrm{if } x<0.8,~y>0.8\\
	(0.138, 1.206, 1.206, 0.0145, 0.0145), & \textrm{if } x<0.8,~y<0.8\\
	(0.5323, 0, 1.206, 0.15, 0.15), & \textrm{if } x>0.8,~y<0.8\\
\end{cases}\]
Similar to~\cite{cheng2024high}, we consider two cases, with different values of gas constants. In the first case, we take $\gamma_e = \gamma_i = 1.4$, and in second case, we take $\gamma_e = 1.4,$ $\gamma_i = 1.67$.  Due to the highly nonlinear interaction of different waves, the solutions contain highly complicated multi-scale structures. We compute the results using $400\times400$ cells.

\begin{figure}[!htbp]
	\begin{center}
		\subfigure[Density ($\rho$) for $\ote$ scheme]{\includegraphics[width=0.3\textwidth]{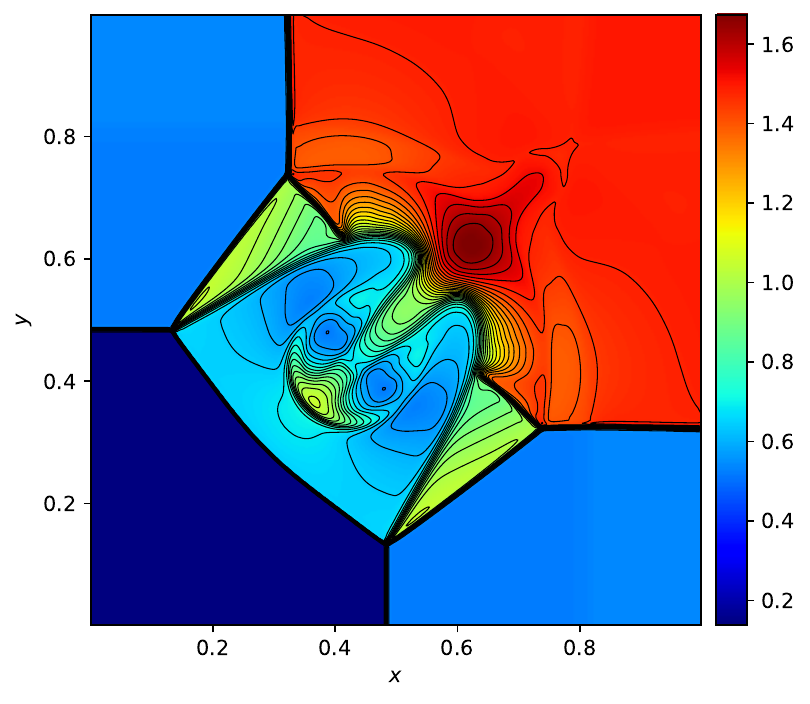}\label{fig:RP_sg_rho_o2}}~
		\subfigure[Density ($\rho$) for $\othe$ scheme]{\includegraphics[width=0.3\textwidth]{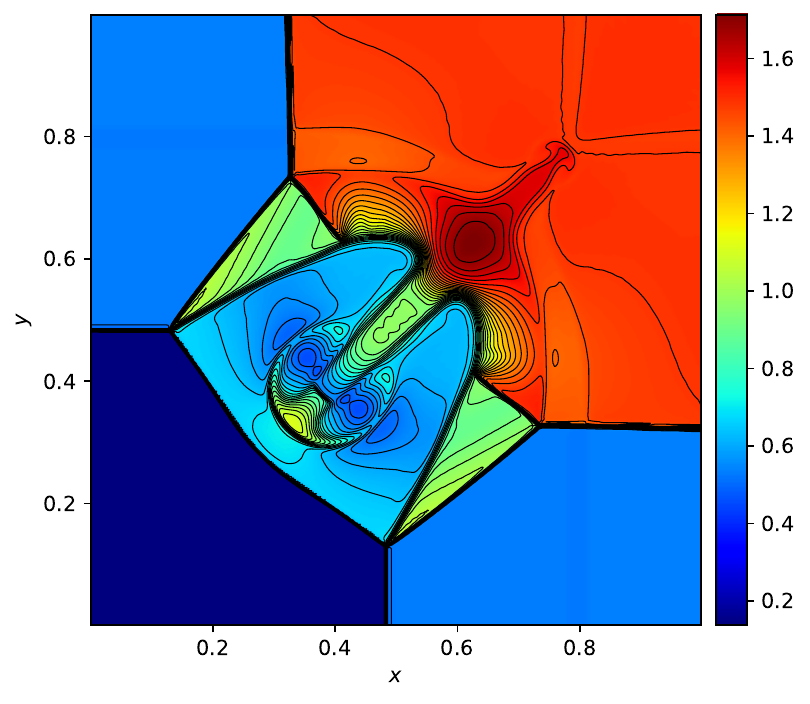}\label{fig:RP_sg_rho_o3}}~
		\subfigure[Density ($\rho$) for $\ofe$ scheme]{\includegraphics[width=0.3\textwidth]{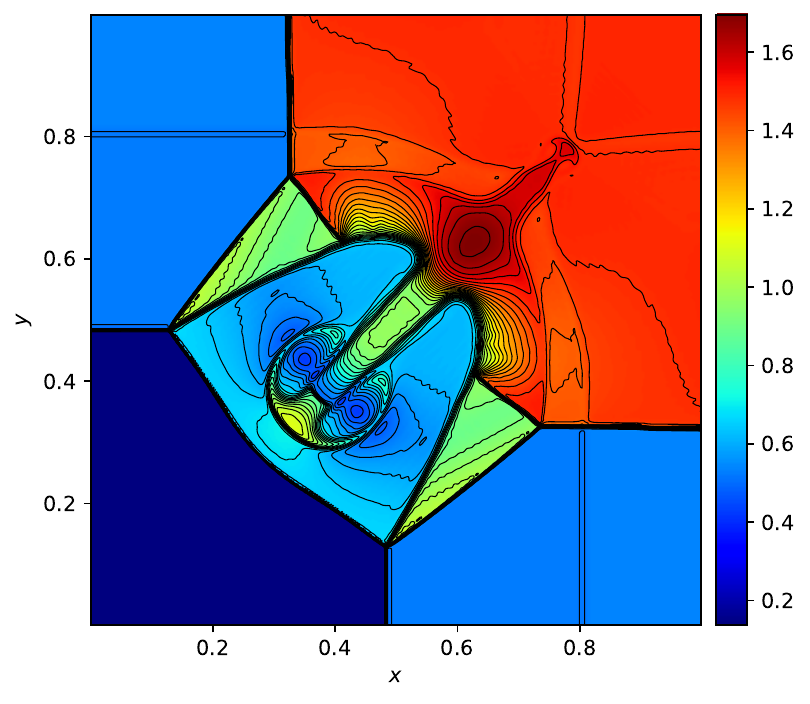}\label{fig:RP_sg_rho_o4}}\\
		\subfigure[Electron pressure ($\pe$) for $\ote$ scheme]{\includegraphics[width=0.3\textwidth]{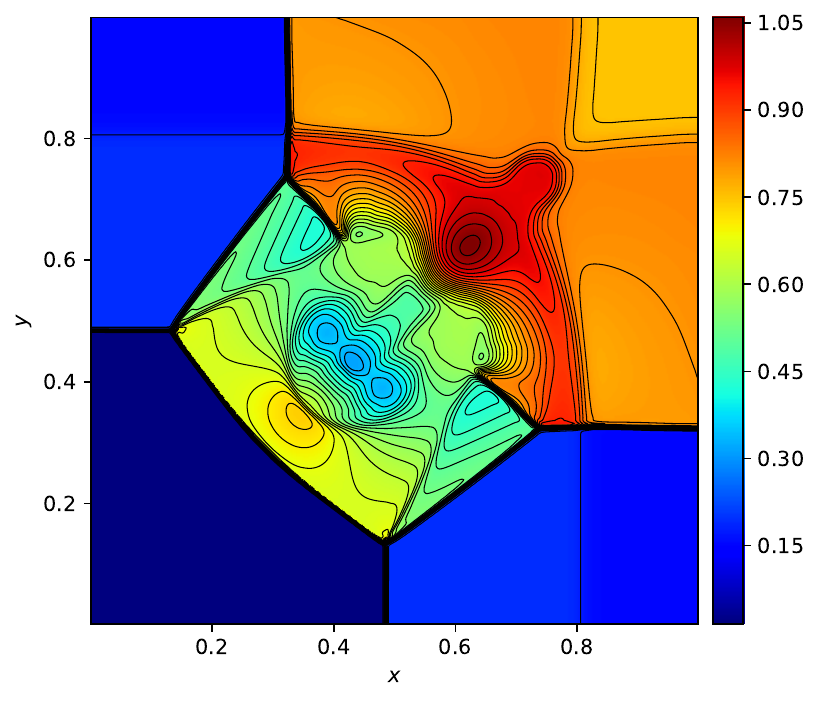}\label{fig:RP_sg_pe_o2}}~
		\subfigure[Electron pressure ($\pe$) for $\othe$ scheme]{\includegraphics[width=0.3\textwidth]{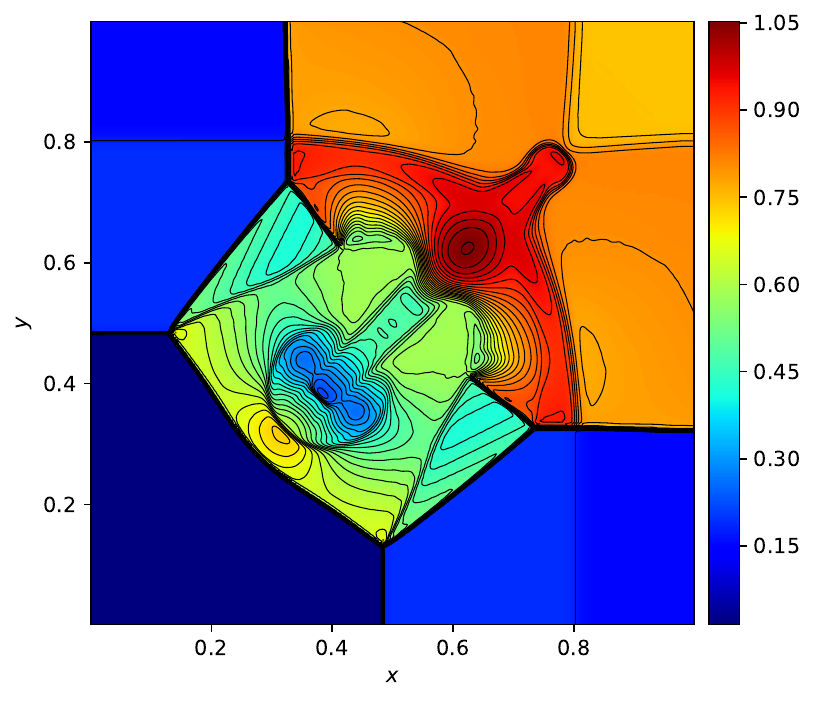}\label{fig:RP_sg_pe_o3}}~
		\subfigure[Electron pressure ($\pe$) for $\ofe$ scheme]{\includegraphics[width=0.3\textwidth]{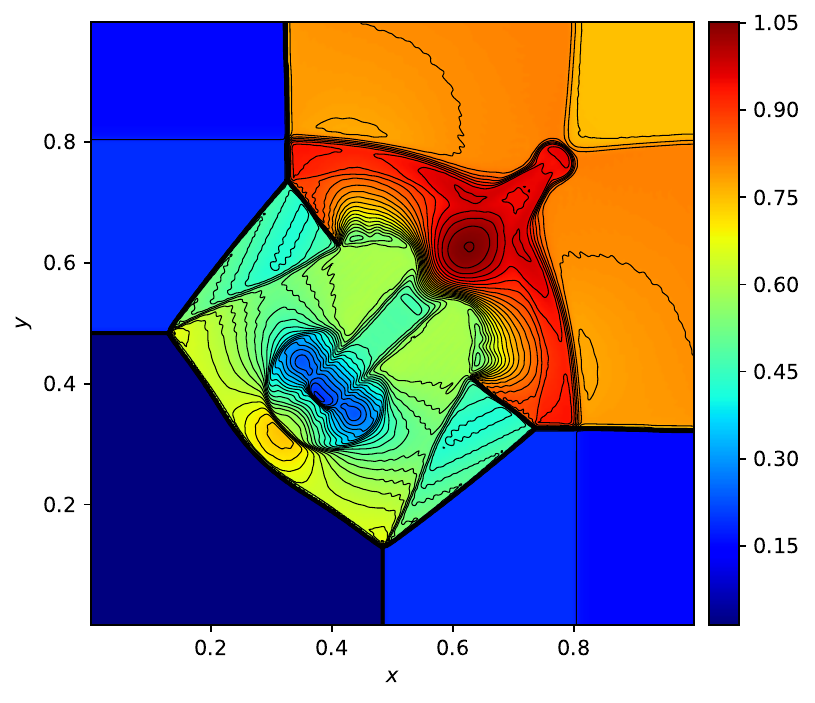}\label{fig:RP_sg_pe_o4}}\\
		\subfigure[Ion pressure ($\pii$) for $\ote$ scheme]{\includegraphics[width=0.3\textwidth]{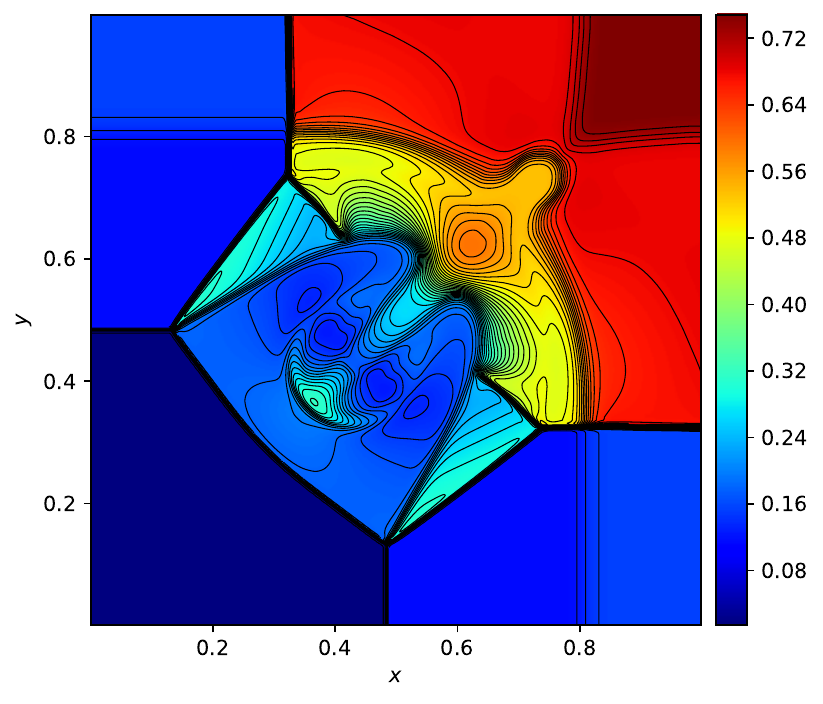}\label{fig:RP_sg_pi_o2}}~
		\subfigure[Ion pressure ($\pii$) for $\othe$ scheme]{\includegraphics[width=0.3\textwidth]{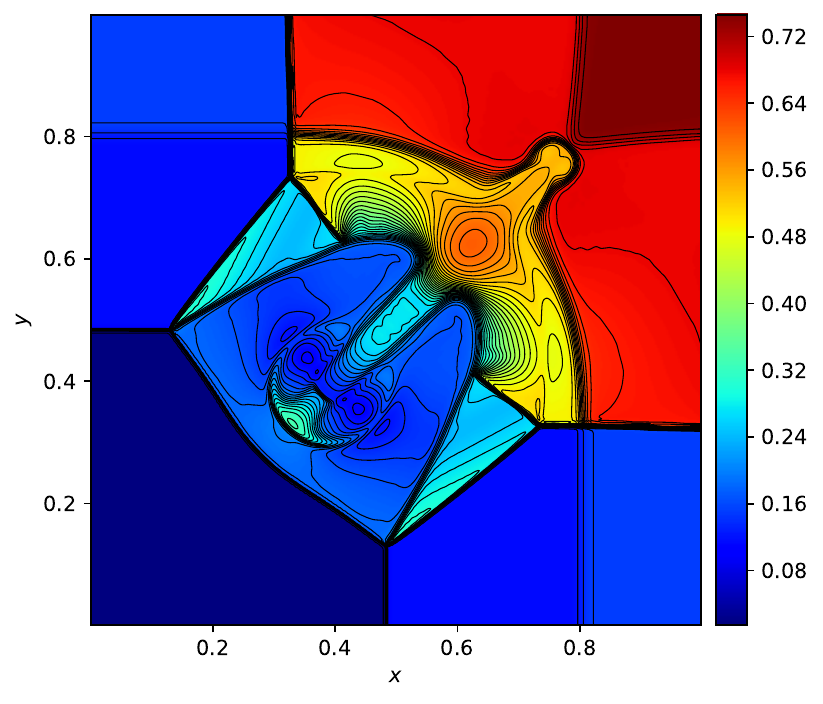}\label{fig:RP_sg_pi_o3}}~
		\subfigure[Ion pressure ($\pii$) for $\ofe$ scheme]{\includegraphics[width=0.3\textwidth]{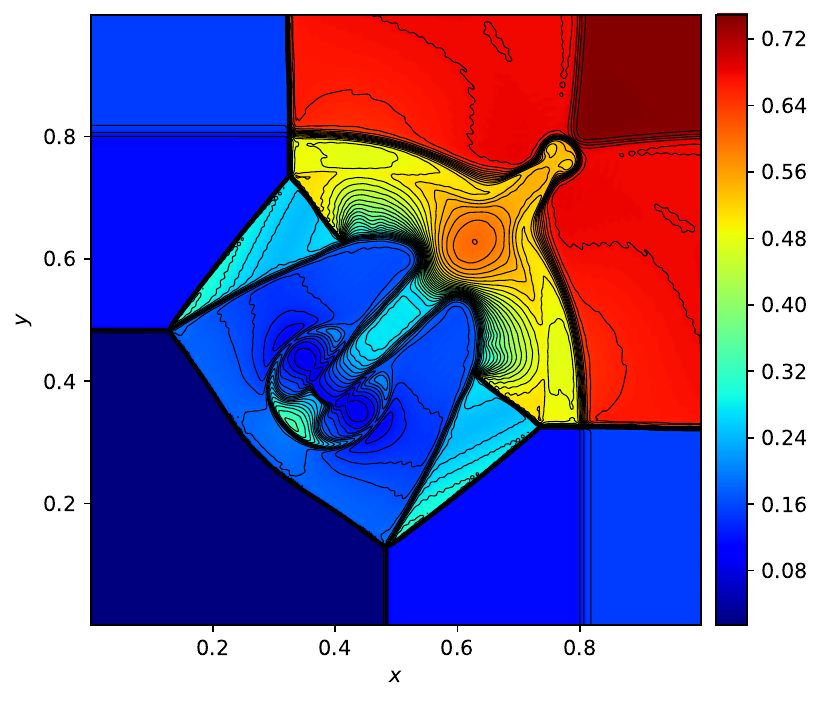}\label{fig:RP_sg_pi_o4}}\\
		\caption{\textbf{\nameref{test:RP_2D_1}}: Plots of density, electron pressure, and ion pressure at time $t = 0.75$ using $400\times400$ cells for $\gamma_e = \gamma_i = 1.4$.}
		\label{fig:RP_sg}
	\end{center}
\end{figure}

\begin{figure}[!htbp]
	\begin{center}
		\subfigure[Density ($\rho$) for $\ote$ scheme]{\includegraphics[width=0.3\textwidth]{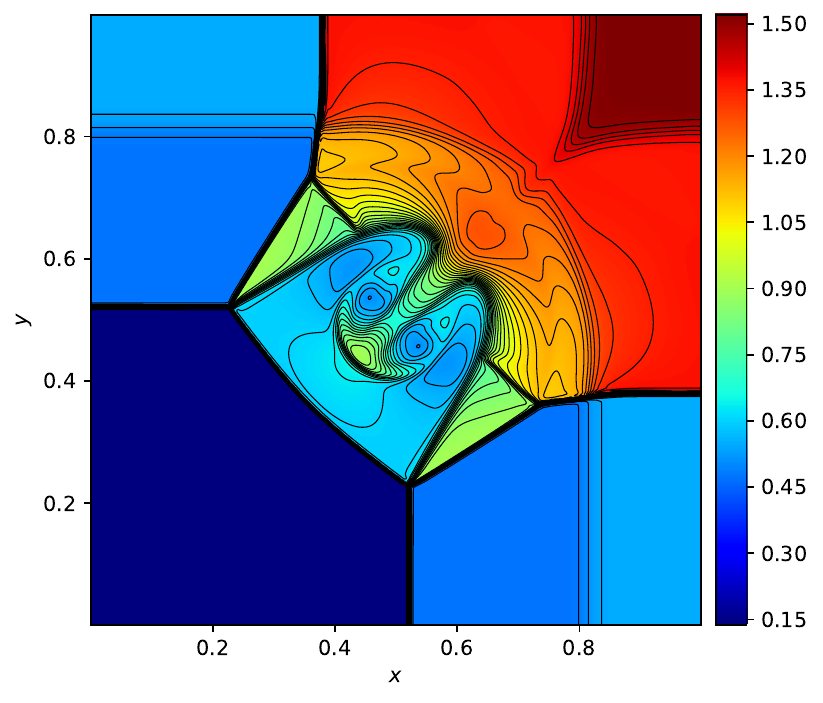}\label{fig:RP_dg_rho_o2}}~
		\subfigure[Density ($\rho$) for $\othe$ scheme]{\includegraphics[width=0.3\textwidth]{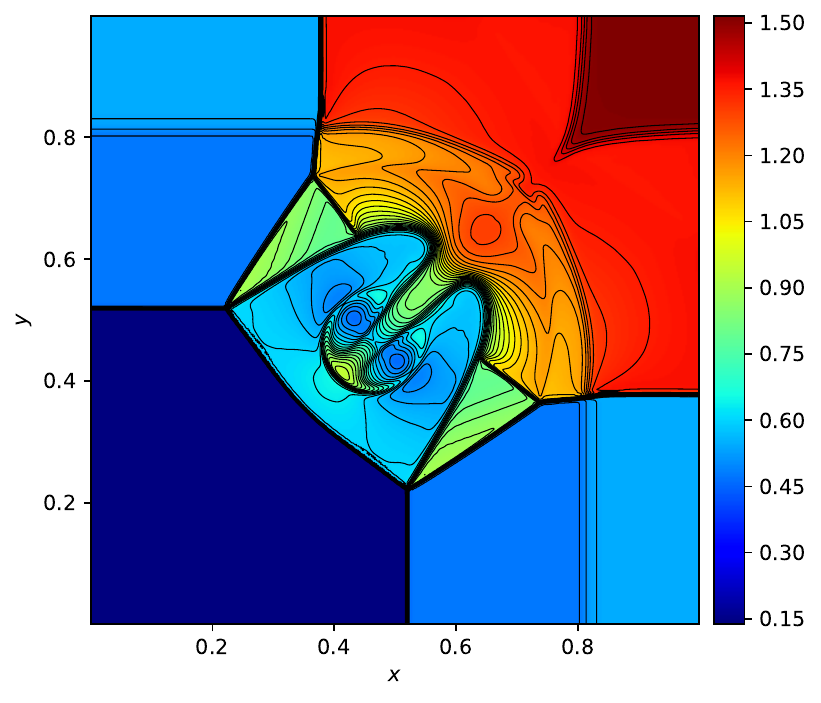}\label{fig:RP_dg_rho_o3}}~
		\subfigure[Density ($\rho$) for $\ofe$ scheme]{\includegraphics[width=0.3\textwidth]{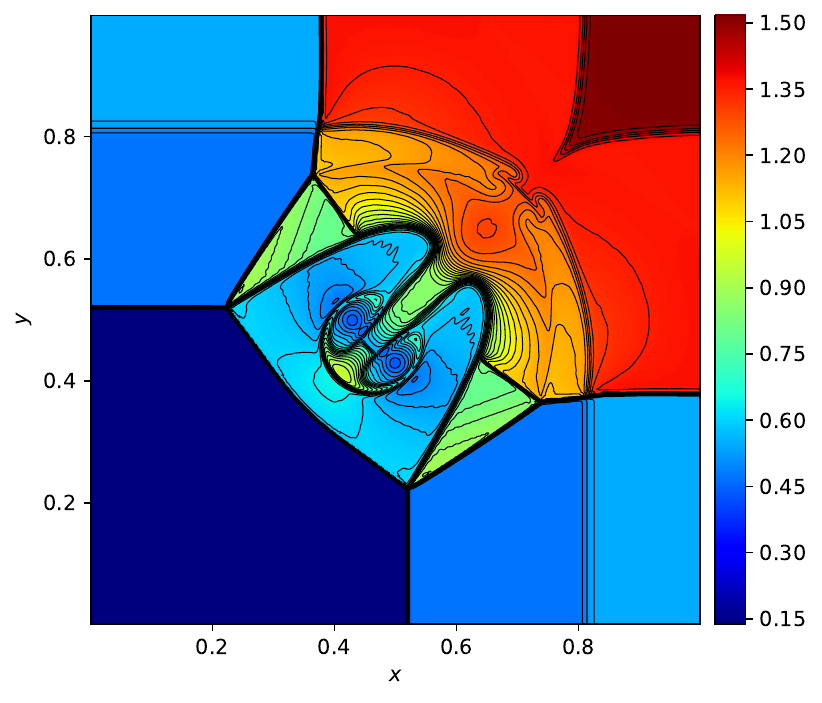}\label{fig:RP_dg_rho_o4}}\\
		\subfigure[Electron pressure ($\pe$) for $\ote$ scheme]{\includegraphics[width=0.3\textwidth]{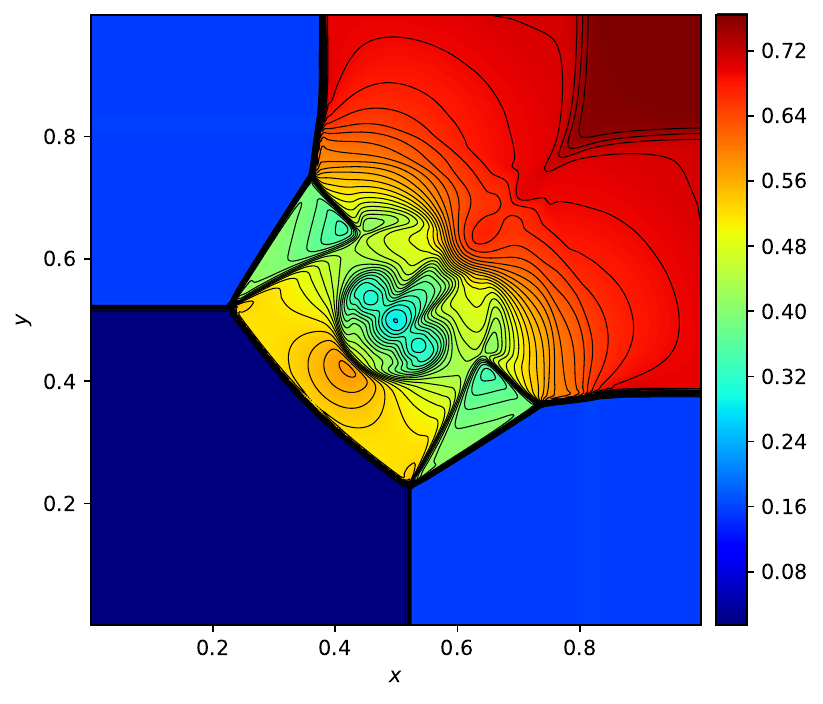}\label{fig:RP_dg_pe_o2}}~
		\subfigure[Electron pressure ($\pe$) for $\othe$ scheme]{\includegraphics[width=0.3\textwidth]{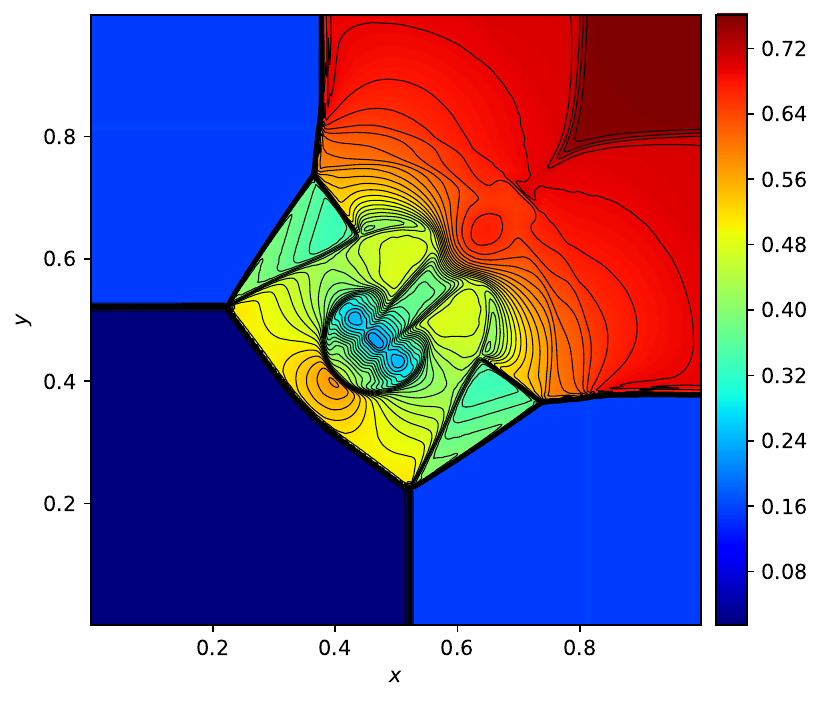}\label{fig:RP_dg_pe_o3}}~
		\subfigure[Electron pressure ($\pe$) for $\ofe$ scheme]{\includegraphics[width=0.3\textwidth]{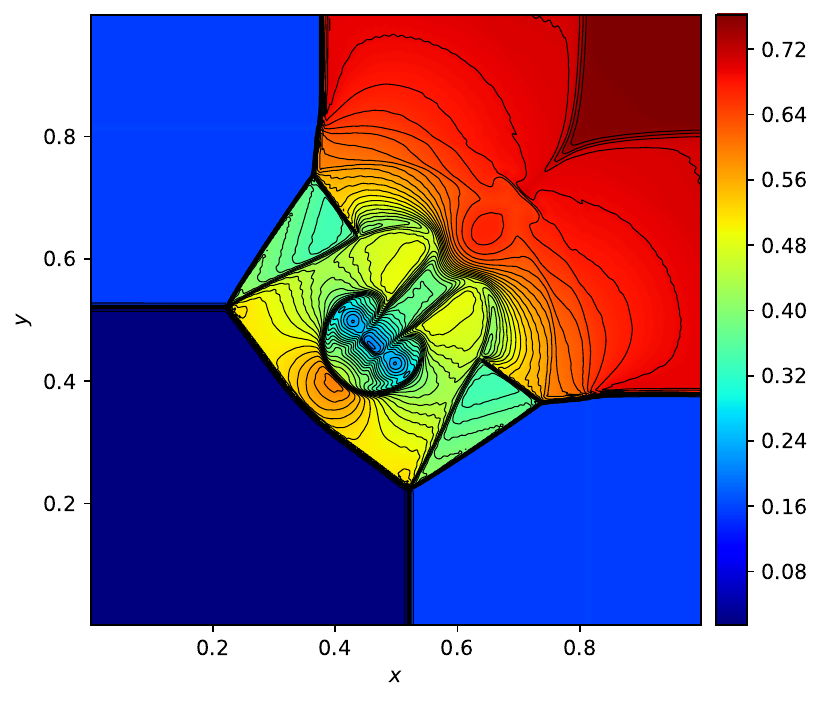}\label{fig:RP_dg_pe_o4}}\\
		\subfigure[Ion pressure ($\pii$) for $\ote$ scheme]{\includegraphics[width=0.3\textwidth]{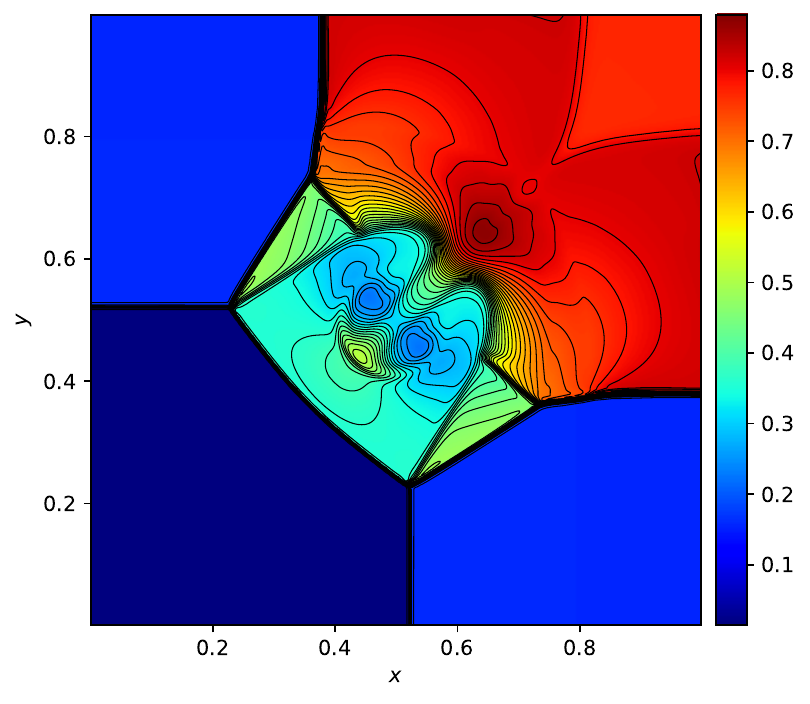}\label{fig:RP_dg_pi_o2}}~
		\subfigure[Ion pressure ($\pii$) for $\othe$ scheme]{\includegraphics[width=0.3\textwidth]{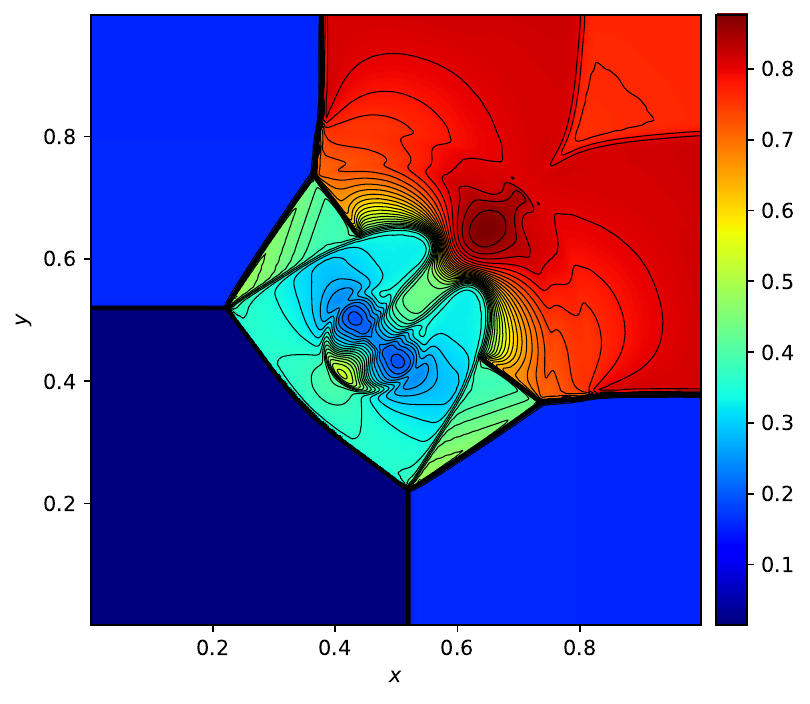}\label{fig:RP_dg_pi_o3}}~
		\subfigure[Ion pressure ($\pii$) for $\ofe$ scheme]{\includegraphics[width=0.3\textwidth]{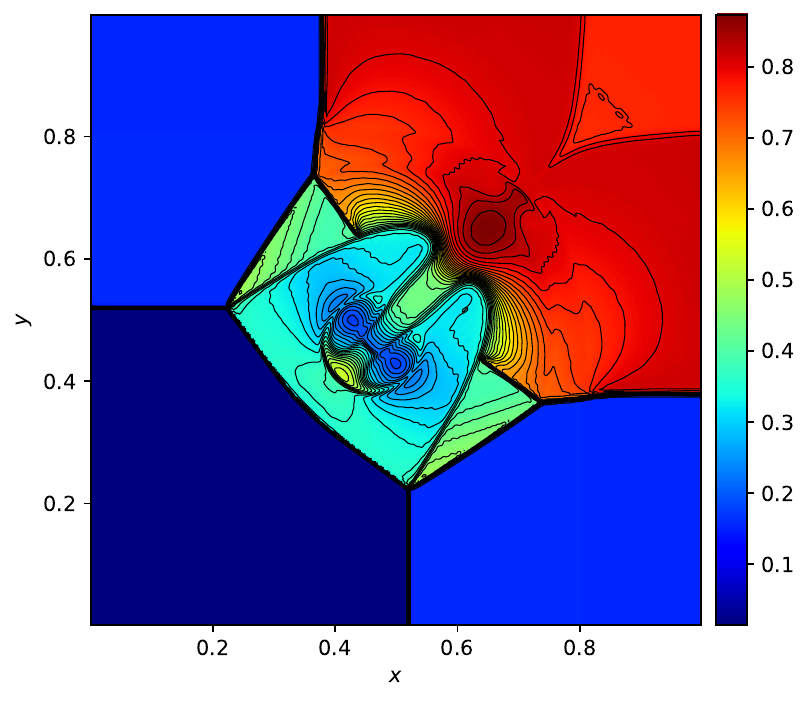}\label{fig:RP_dg_pi_o4}}\\
		\caption{\textbf{\nameref{test:RP_2D_1}}: Plots of density, electron pressure, and ion pressure at time $t = 0.59$ using $400\times400$ cells for $\gamma_e = 1.4,$ $\gamma_i = 1.67$.}
		\label{fig:RP_dg}
	\end{center}
\end{figure}
\begin{figure}[!htbp]
	\begin{center}
		\subfigure[Total entropy change with time for $\gamma_e = 1.4,$ $\gamma_i = 1.4$.]{\includegraphics[width=0.48\textwidth]{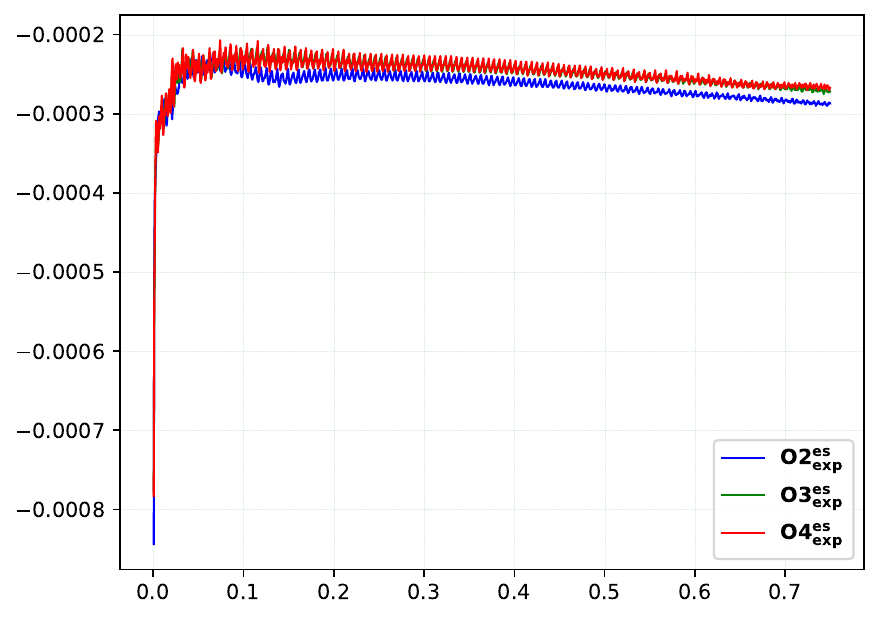} \label{fig:sg_entropy}}~
		\subfigure[Total entropy change with time for $\gamma_e = 1.4,$ $\gamma_i = 1.67$.]{\includegraphics[width=0.48\textwidth]{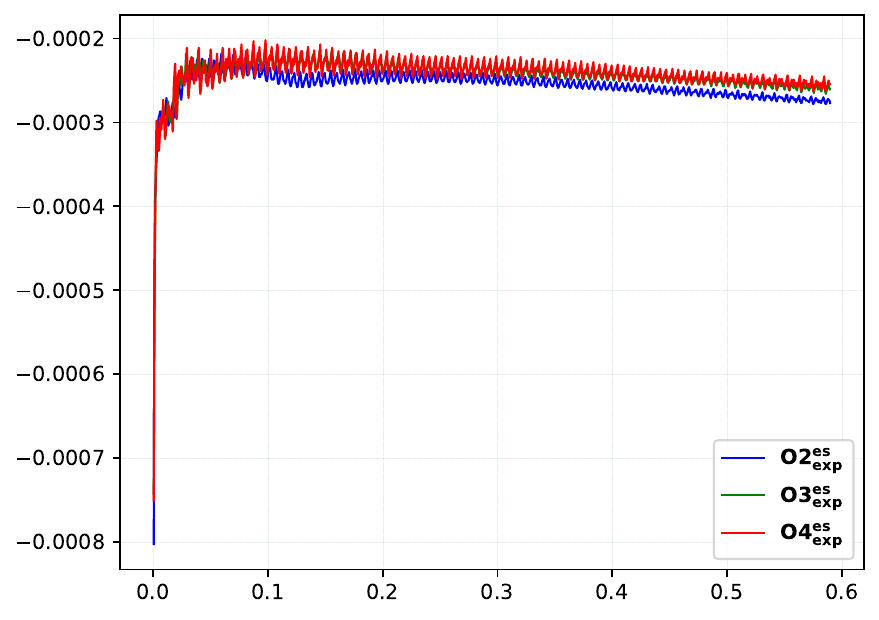} \label{fig:dg_entropy}}
		\caption{\textbf{\nameref{test:RP_2D_1}}: Plots of total entropy change with time using $400\times400$ cells for both the cases using $\ote,$ $\othe$ and $\ofe$ schemes.}
		\label{fig:RP_entropy}
	\end{center}    
\end{figure}

The numerical results for the first case ($\gamma_e = \gamma_i = 1.4$) are presented in Figures~\eqref{fig:RP_sg} at final time $t=0.75$, where we have plots of density, electron pressure, and ion pressure for all three schemes. We observe the $\othe$ and $\ofe$ schemes result in a much more detailed solution than the $\ote$ scheme.

In Figure~\eqref{fig:RP_dg}, we have presented the numerical results for the second case ($\gamma_e = 1.4,$ $\gamma_i = 1.67$) at time $t=0.59$ on $400\times400$ cells. We have plotted the density, electron pressure, and ion pressure for all three schemes. We note that the solution structure has changed significantly. Numerically, we observe that $\ote$ is much more diffusive than the $\othe$ and $\ofe$ schemes.

Total entropy change with time for both cases is plotted in Figure~\eqref{fig:RP_entropy}. We again see that in both cases $\ote$ is more diffusive than $\othe$ and $\ofe$, which are both comparable.

\subsection{Shock-bubble interaction problem}\label{test:S_bubble}
Following~\cite{cheng2024highweno}, we consider the shock bubble interaction problem for this test. We consider a computational domain of $[0,6.5]\times[0,0.89]$. The initial bubble has its centre at $(3.5,0)$ with a radius of $0.5$, and the internal initial state is given by,
\[\left(\rho, \bu, \pe, \pii\right)  = 
(0.1819, 0, 0, 0.220458, 0.220458)
\]
In the rest of the domain, we consider the states,
\[\left(\rho, \bu, \pe, \pii\right)  = \begin{cases}
	(1, 0, 0, 0.3571425, 0.3571425), & \textrm{if } 0\leq x<4.5\\
	(1.3764, -0.3336, 0, 0.560643, 0.560643), & \textrm{if } 4.5\leq x<6.5.\\
\end{cases}\]
The gas constants are $\gamma_e = \gamma_i = 1.4$. We use reflective boundary conditions at the bottom and top boundaries, whereas Dirichlet boundary conditions are used on the left and right boundaries of the domain.
\begin{figure}[!htbp]
	\centering
	\begin{minipage}{0.8\textwidth}
		\centering
		\subfigure[Density ($\rho$) for $\ote$ scheme]{\includegraphics[width=\textwidth]{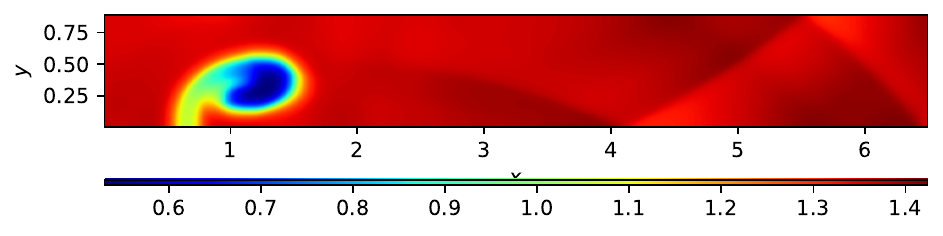}\label{fig:bouble_o2}}\\[1ex]
		\subfigure[Density ($\rho$) for $\othe$ scheme]{\includegraphics[width=\textwidth]{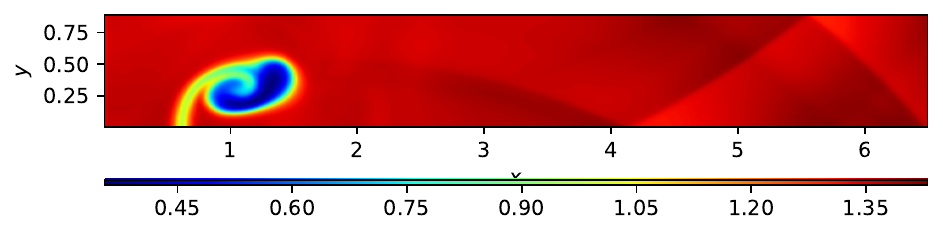}\label{fig:bouble_o3}}\\[1ex]
		\subfigure[Density ($\rho$) for $\ofe$ scheme]{\includegraphics[width=\textwidth]{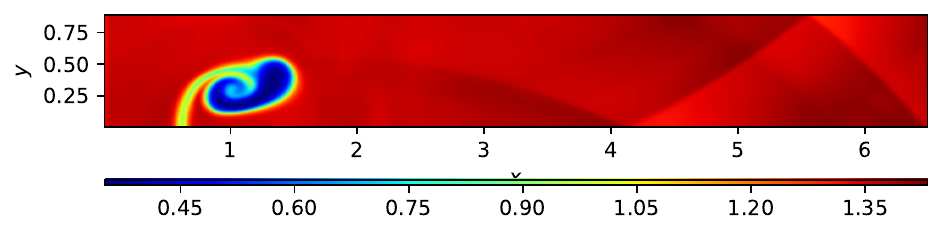}\label{fig:bouble_o4}}\\
	\end{minipage}
	\hfill
	\begin{minipage}{0.45\textwidth}
		\centering
		\subfigure[Total entropy change with time]{\includegraphics[width=\textwidth]{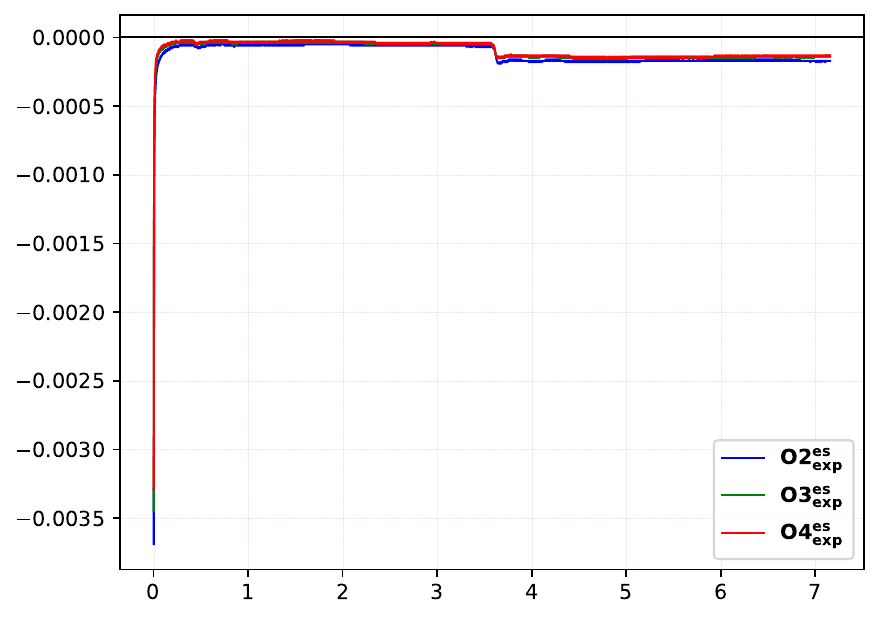}\label{fig:bouble_ent}}
	\end{minipage}
	\caption{\textbf{\nameref{test:S_bubble}}: Plots of density and total entropy evolution at time $t = 7.1571$ using $400\times144$ cells. We have also plotted the total entropy change with time.}
	\label{fig:S_bouble}
\end{figure}
\begin{figure}[!htbp]
	\begin{center}
		\subfigure[Density $\rho$ for $\ofe$ scheme at time $t =0$]{\includegraphics[width=0.5\textwidth]{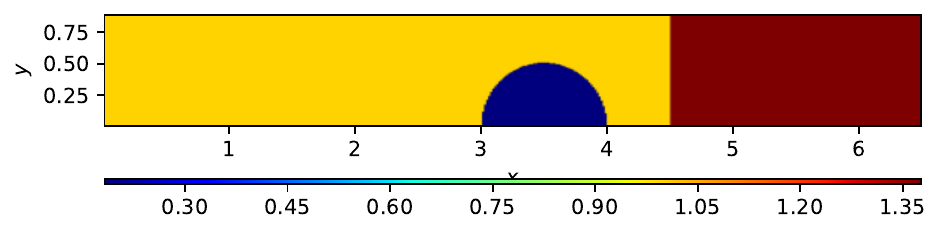}\label{fig:bouble_t0}}~
		\subfigure[Density $\rho$ for $\ofe$ scheme at time $t =0.6294$]{\includegraphics[width=0.5\textwidth]{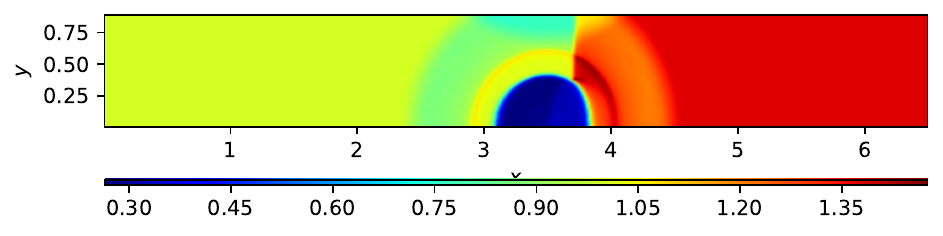}\label{fig:bouble_t06}}\\
		\subfigure[Density $\rho$ for $\ofe$ scheme at time $t =1.1099$]{\includegraphics[width=0.5\textwidth]{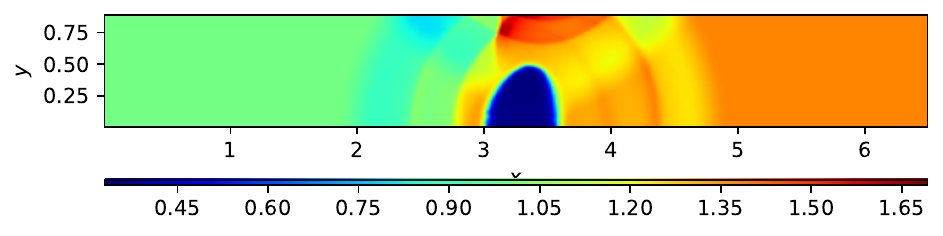}\label{fig:bouble_t1}}~
		\subfigure[Density $\rho$ for $\ofe$ scheme at time $t =3.3408$]{\includegraphics[width=0.5\textwidth]{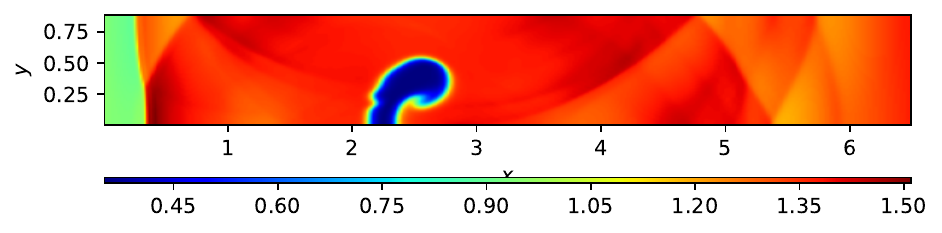}\label{fig:bouble_t3}}\\
		\subfigure[Density $\rho$ for $\ofe$ scheme at time $t =5.0358$]{\includegraphics[width=0.5\textwidth]{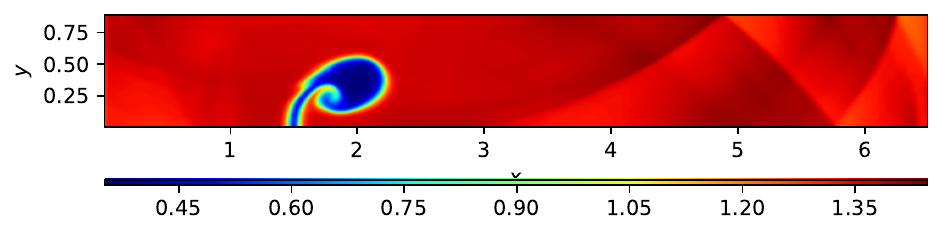}\label{fig:bouble_t5}}~
		\subfigure[Density $\rho$ for $\ofe$ scheme at time $t =7.1571$]{\includegraphics[width=0.5\textwidth]{bouble/bouble_O4.pdf}\label{fig:bouble_t7}}\\
		\caption{\textbf{\nameref{test:S_bubble}}: Plots of density for $\ofe$ scheme using $400\times144$ cells at different times.}
		\label{fig:S_bouble_t}
	\end{center}
\end{figure}

In Figure~\eqref{fig:S_bouble}, we have plotted the density for $\ote$, $\othe$ and $\ofe$ schemes at time $t=7.1571$ using $400\times144$ cells. We observe that the $\ote$ scheme is the most diffusive and thus fails to capture small-scale structures effectively. In contrast, the $\ofe$ scheme is more accurate than the $\othe$ scheme, and both are able to capture small-scale structures effectively. We have also plotted the total entropy change with time for all the schemes. We note that all the schemes are entropy stable. Again, we note the diffusive behavior of $\ote$ when compared with higher-order schemes $\othe$ and $\ofe.$

In Figure~\eqref{fig:S_bouble_t}, we have also plotted the density at different times $t=0.6294, 1.1099,$ $3.3408, 5.0358, 7.1571$, for $\ofe$ scheme. These simulations illustrate the evolution of the bubble after the shock interaction, its deformation, and finally generate a vortex ring. 

\subsection{Richtmyer-Meshkov instability}\label{test:RM_instability}
\begin{figure}[!htbp]
	\begin{center}
		\subfigure[Density ($\rho$) for $\ote$ scheme]{\includegraphics[width=0.3\textwidth]{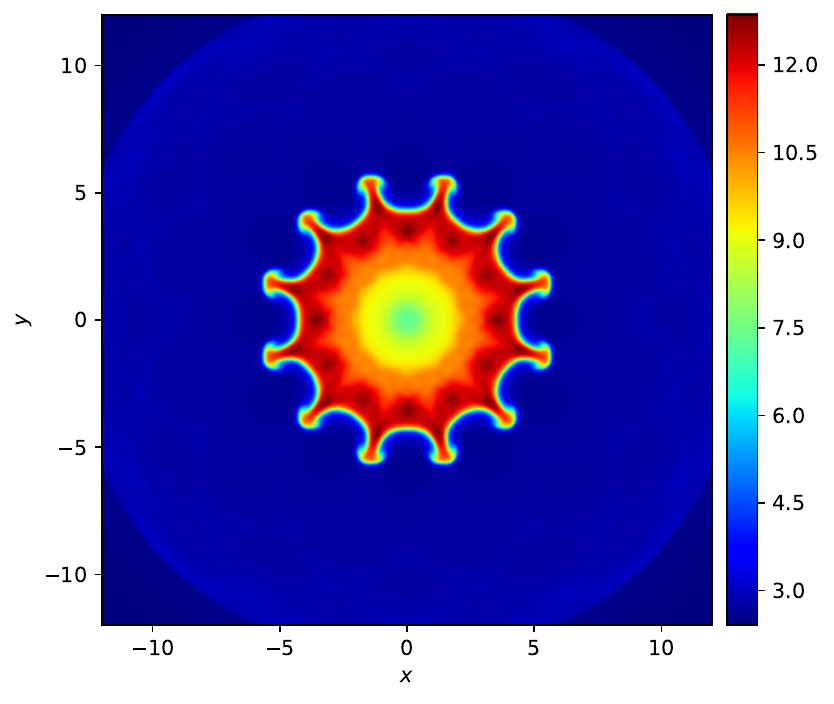}\label{fig:RM_rho_o2}}~
		\subfigure[Density ($\rho$) for $\othe$ scheme]{\includegraphics[width=0.3\textwidth]{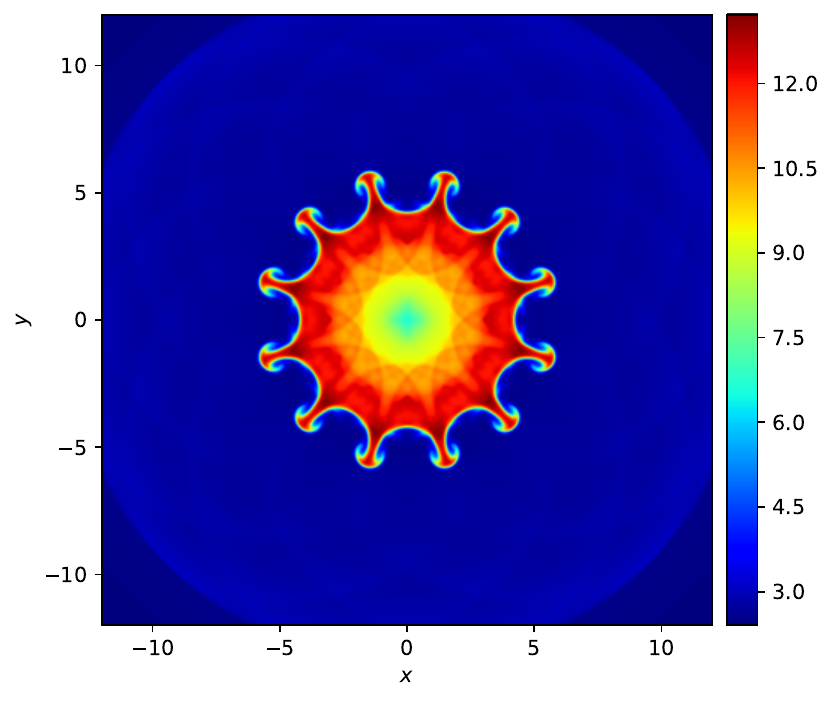}\label{fig:RM_rho_o3}}~
		\subfigure[Density ($\rho$) for $\ofe$ scheme]{\includegraphics[width=0.3\textwidth]{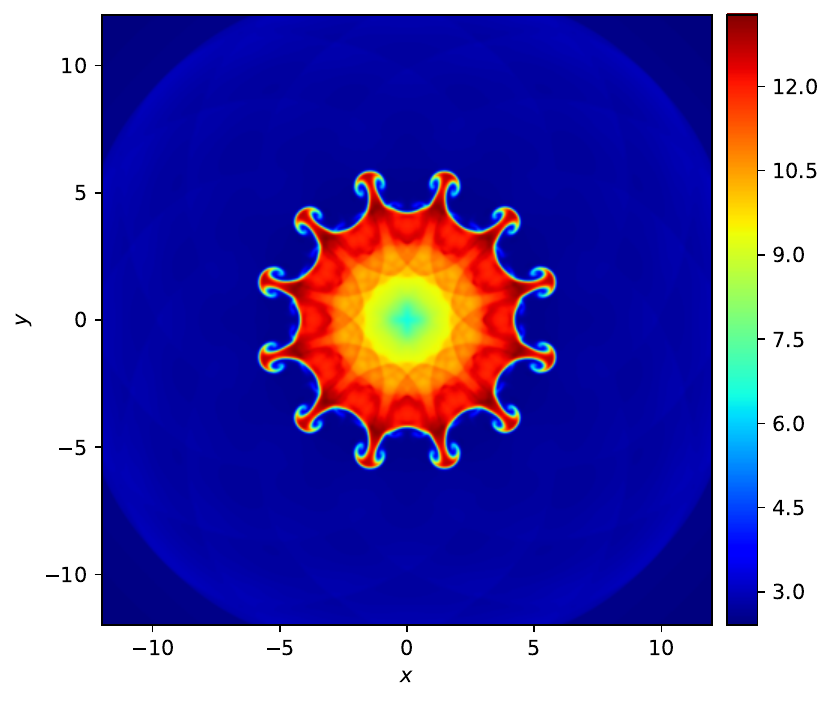}\label{fig:RM_rho_o4}}\\
		\subfigure[Electron pressure ($\pe$) for $\ote$ scheme]{\includegraphics[width=0.3\textwidth]{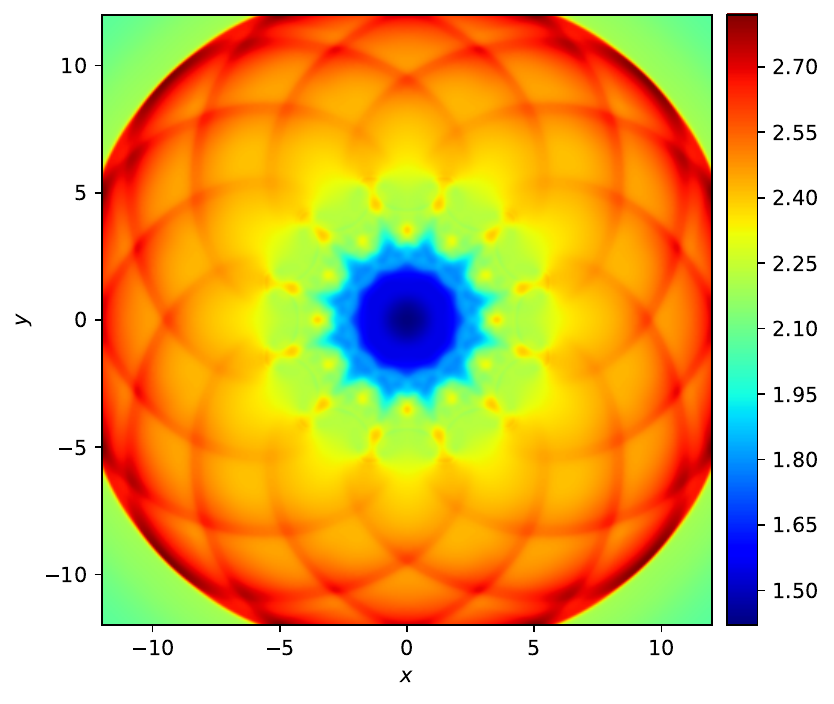}\label{fig:RM_pe_o2}}~
		\subfigure[Electron pressure ($\pe$) for $\othe$ scheme]{\includegraphics[width=0.3\textwidth]{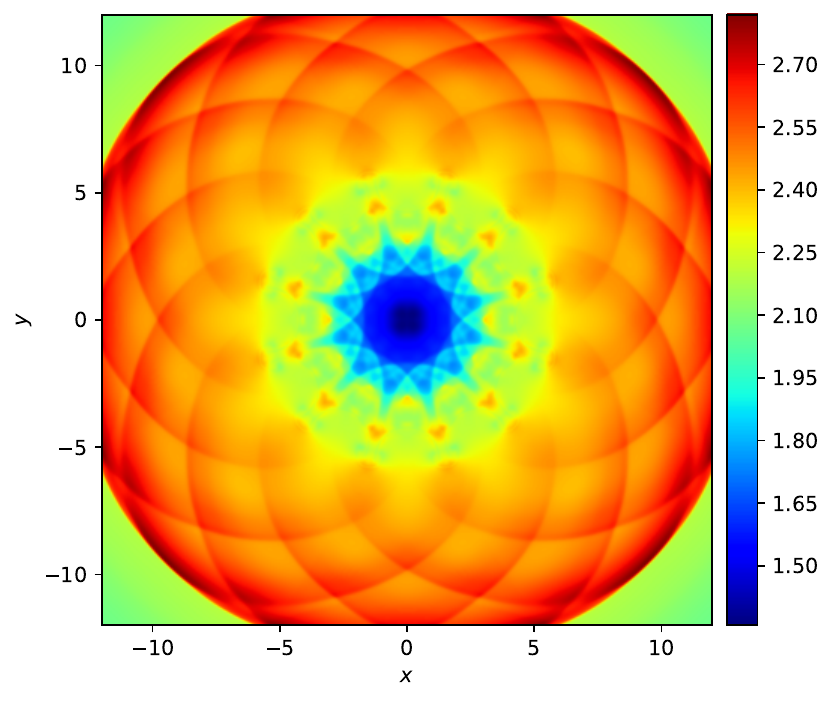}\label{fig:RM_pe_o3}}~
		\subfigure[Electron pressure ($\pe$) for $\ofe$ scheme]{\includegraphics[width=0.3\textwidth]{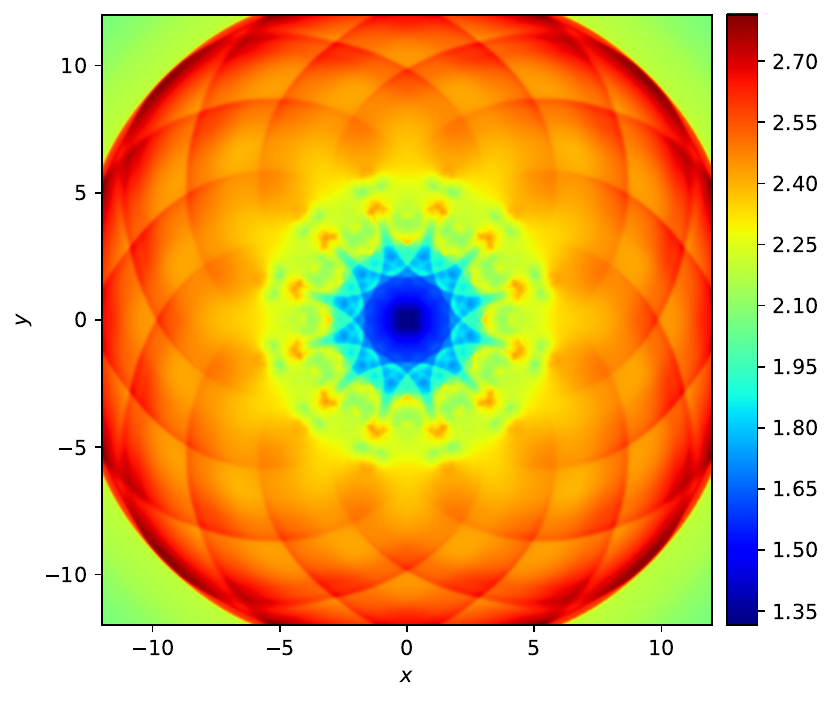}\label{fig:RM_pe_o4}}\\
		\subfigure[Ion pressure ($\pii$) for $\ote$ scheme]{\includegraphics[width=0.3\textwidth]{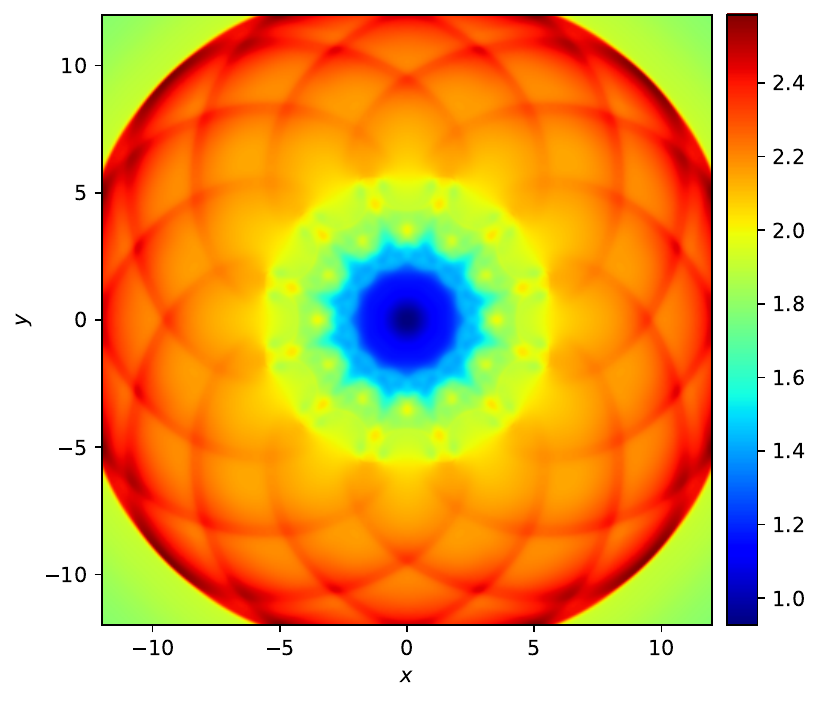}\label{fig:RM_pi_o2}}~
		\subfigure[Ion pressure ($\pii$) for $\othe$ scheme]{\includegraphics[width=0.3\textwidth]{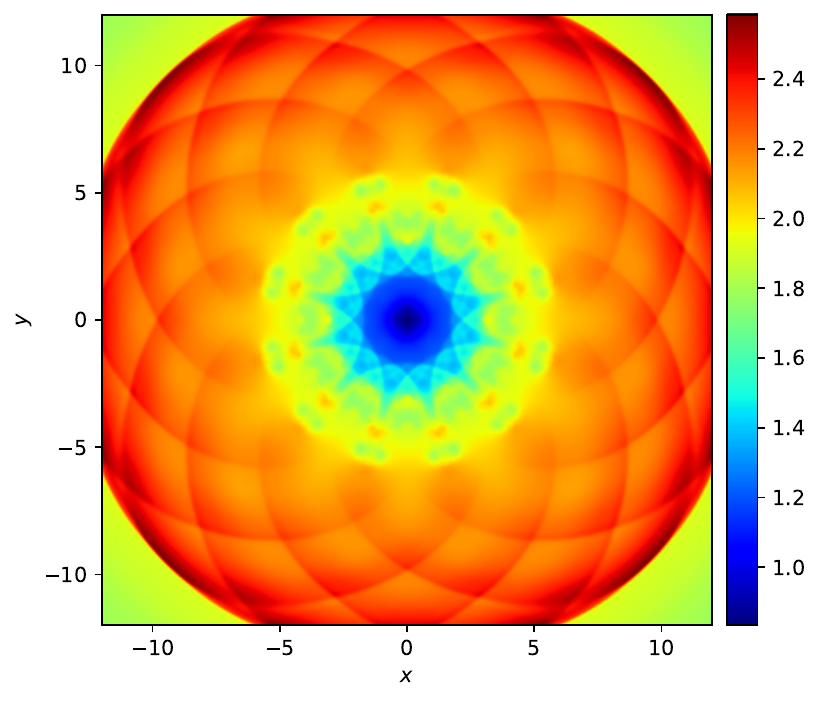}\label{fig:RM_pi_o3}}~
		\subfigure[Ion pressure ($\pii$) for $\ofe$ scheme]{\includegraphics[width=0.3\textwidth]{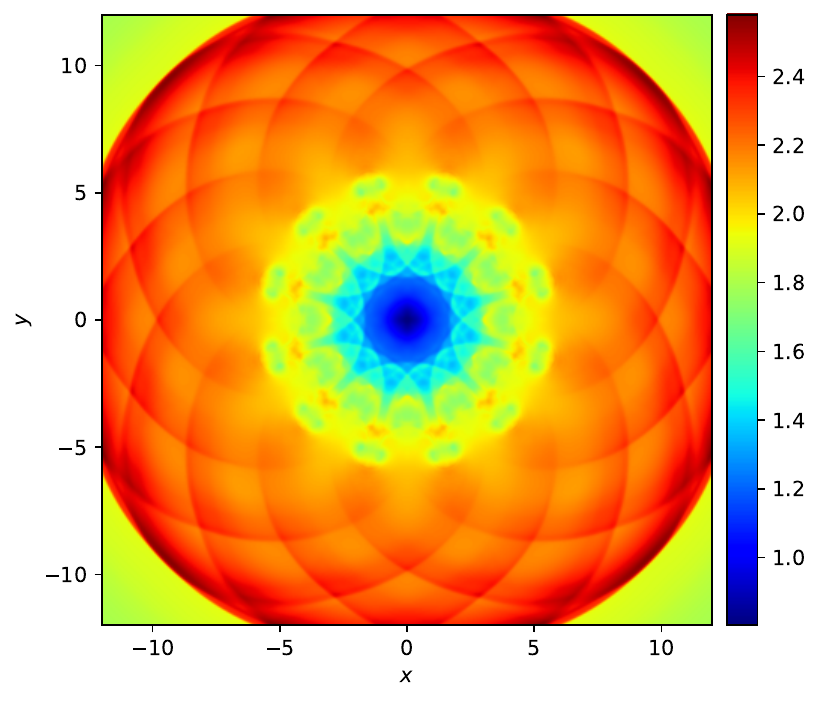}\label{fig:RM_pi_o4}}\\
		\caption{\textbf{\nameref{test:RM_instability}}: Plots of density, electron pressure and ion pressure at time $t = 17.46$ using $800\times800$ cells.}
		\label{fig:RM}
	\end{center}
\end{figure}
\begin{figure}[!htbp]
	\begin{center}
		\subfigure[Density ($\rho$) for $\ote$ scheme]{\includegraphics[width=0.3\textwidth]{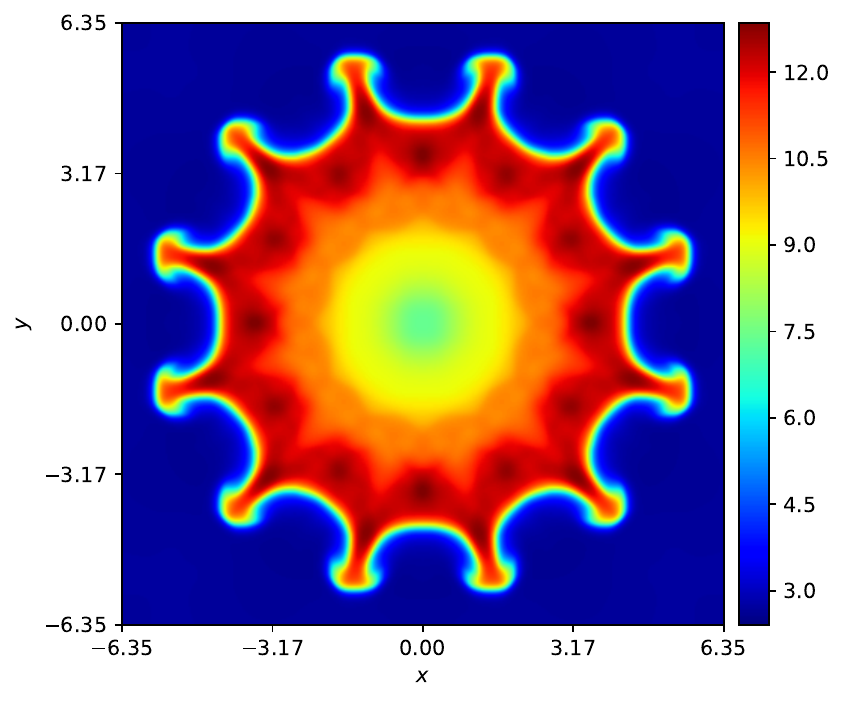}\label{fig:RM_rho_z_o2}}~
		\subfigure[Density ($\rho$) for $\othe$ scheme]{\includegraphics[width=0.3\textwidth]{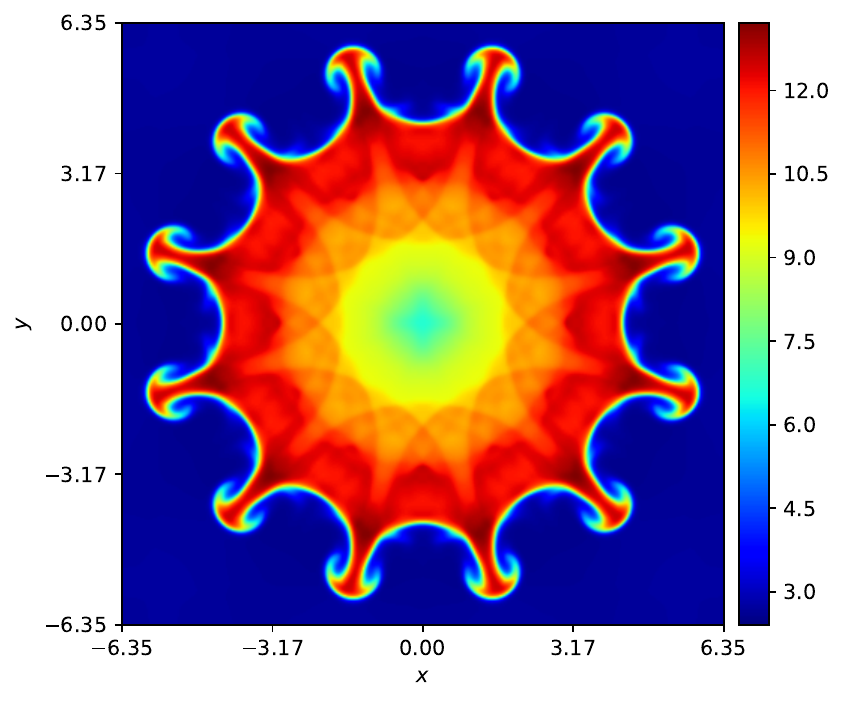}\label{fig:RM_rho_z_o3}}~
		\subfigure[Density ($\rho$) for $\ofe$ scheme]{\includegraphics[width=0.3\textwidth]{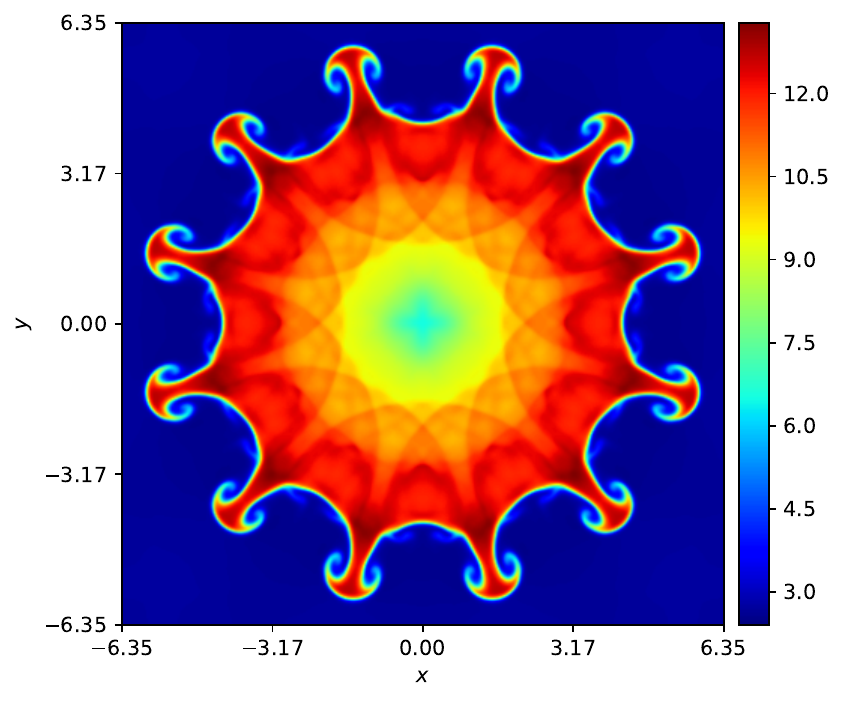}\label{fig:RM_rho_z_o4}}\\
		\subfigure[Electron pressure ($\pe$) for $\ote$ scheme]{\includegraphics[width=0.3\textwidth]{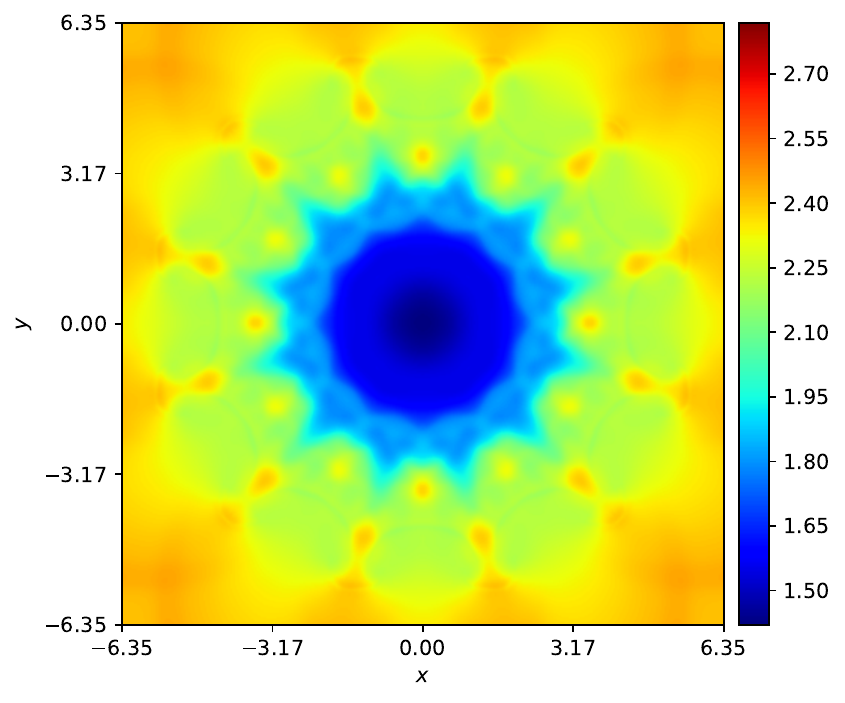}\label{fig:RM_pe_z_o2}}~
		\subfigure[Electron pressure ($\pe$) for $\othe$ scheme]{\includegraphics[width=0.3\textwidth]{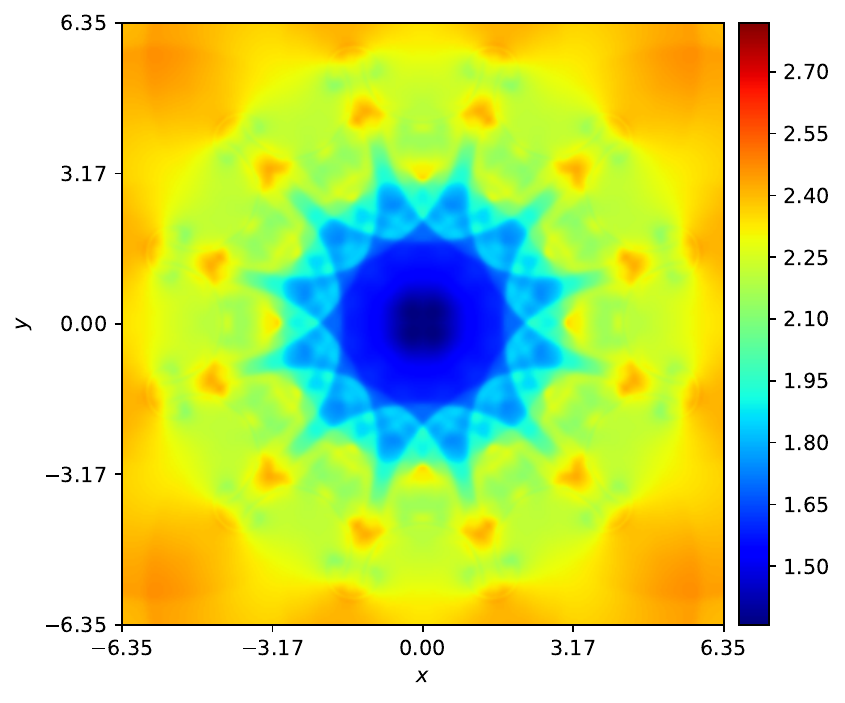}\label{fig:RM_pe_z_o3}}~
		\subfigure[Electron pressure ($\pe$) for $\ofe$ scheme]{\includegraphics[width=0.3\textwidth]{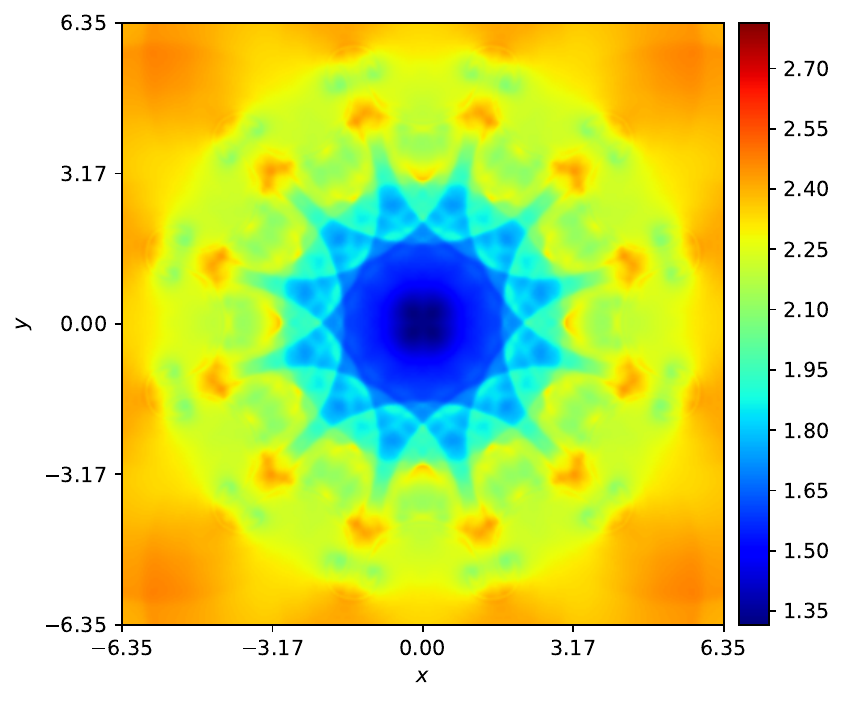}\label{fig:RM_pe_z_o4}}\\
		\subfigure[Ion pressure ($\pii$) for $\ote$ scheme]{\includegraphics[width=0.3\textwidth]{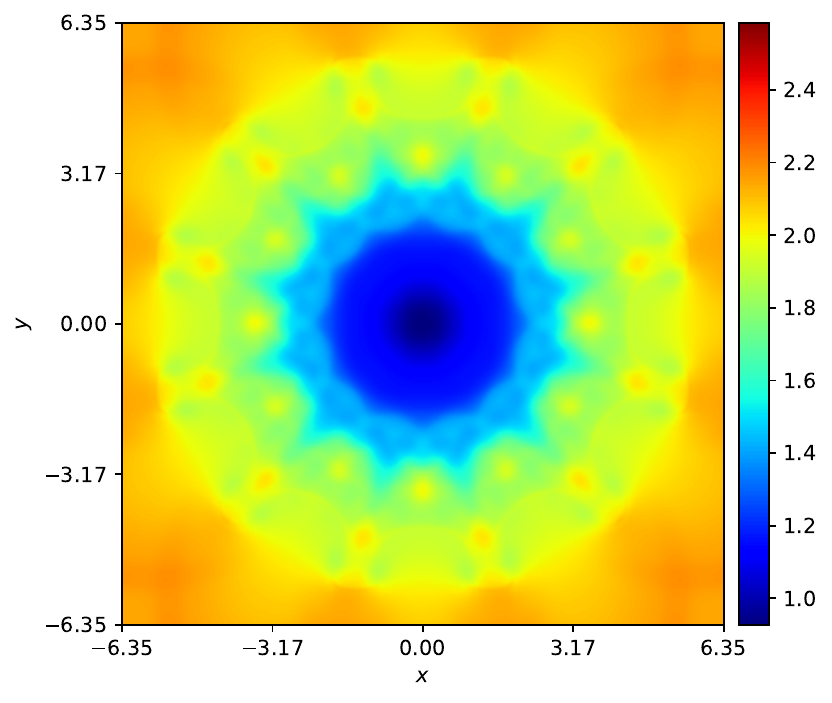}\label{fig:RM_pi_z_o2}}~
		\subfigure[Ion pressure ($\pii$) for $\othe$ scheme]{\includegraphics[width=0.3\textwidth]{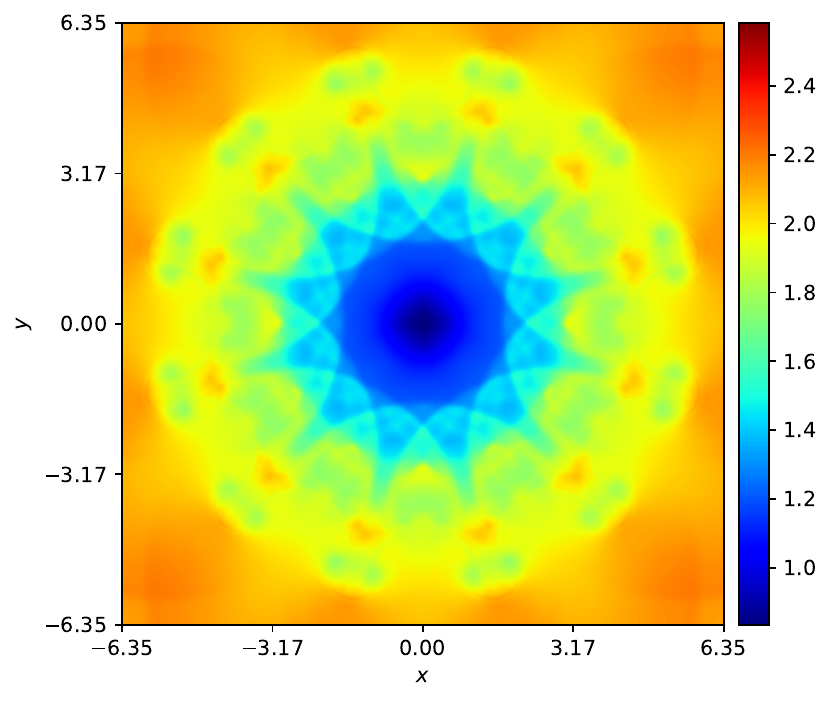}\label{fig:RM_pi_z_o3}}~
		\subfigure[Ion pressure ($\pii$) for $\ofe$ scheme]{\includegraphics[width=0.3\textwidth]{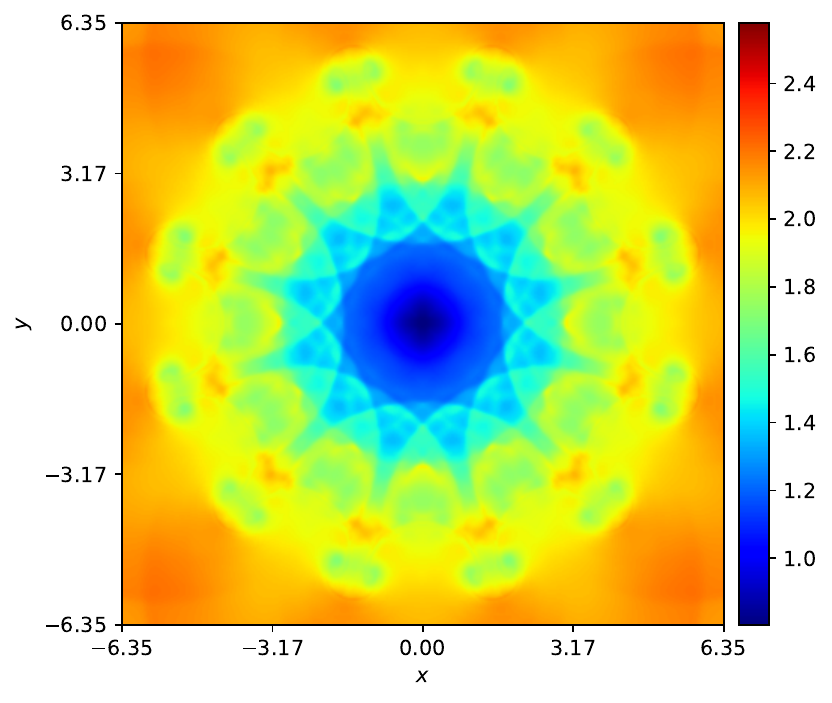}\label{fig:RM_pi_z_o4}}\\
		\caption{\textbf{\nameref{test:RM_instability}}: Zoomed plots of density, electron pressure and ion pressure at time $t = 17.46$ using $800\times800$ cells.}
		\label{fig:RM_zoom}
	\end{center}
\end{figure}
\begin{figure}[!htbp]
	\begin{center}
		\includegraphics[width=0.45\textwidth]{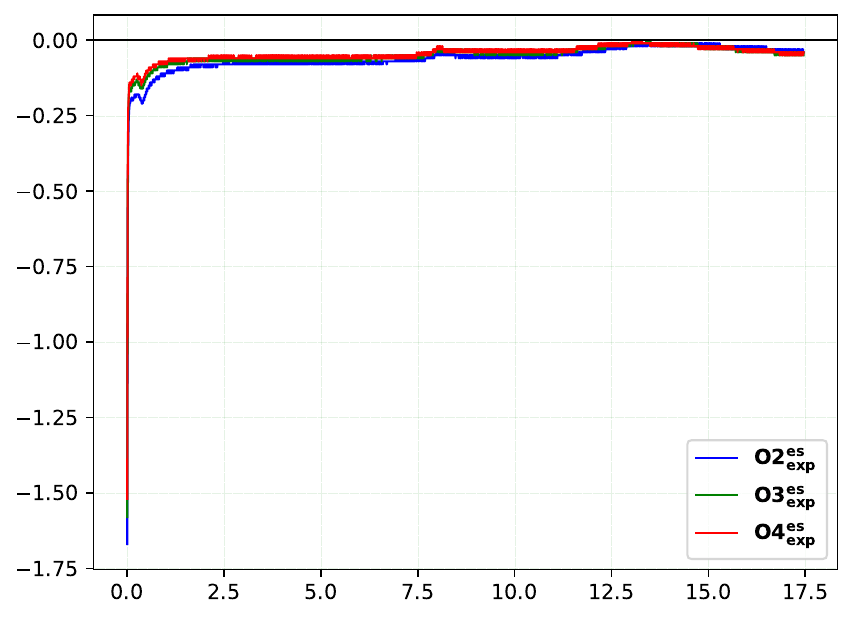}
		\caption{\textbf{\nameref{test:RM_instability}}: Plots of total entropy change with time using $800\times800$ cells for $\ote$, $\othe$ and $\ofe$ schemes.}
		\label{fig:RM_entropy}
	\end{center}    
\end{figure}
In the final test case, we consider the Richtmyer-Meshkov instability problem for the OFTT-Euler system proposed in~\cite{cheng2024high}, which has applications in ICF (see~\cite{tian2006numerical,sauppe2019using}). We consider a computational domain of $[-12,12]\times[-12,12]$.  The incident shock wave is initially located at $r_s=7.5$ and the perturbed contact discontinuity is initially located at $r_c=r_0+A_0 \cos{(\omega\theta)}$, where $r_0 = 7.162$, the perturbation amplitude $A_0 = 0.24$, and the perturbation wave number $\omega = 12$.  The hydrodynamic instability is caused by the incident shock wave hitting the perturbed contact discontinuity. Following~\cite{cheng2024high}, the problem is initialized as follows:
\[\left(\rho, \bu, \pe, \pii\right)  = \begin{cases}
	(5.04, 0, 0, 0.6, 0.4), & \textrm{if } x^2 + y^2<r_c^2,\\
	(1, 0, 0, 0.6, 0.4), & \textrm{if } r_c^2<x^2+y^2<r_s^2,\\
	(1.479, -0.518\cos{(\theta)}, -0.518\sin{(\theta)}, 1.041, 0.788), & \textrm{otherwise,}\\
\end{cases}\]
with $\gamma_e = 1.4$ and $\gamma_i = 1.67$. We use Neumann boundary conditions.

We have presented numerical results in Figure~\eqref{fig:RM} for the $\ote$, $\othe$ and $\ofe$ schemes at time $t=17.46$ using $800\times800$ cells. We have plotted density, electron pressure, and ion pressure. We observe that $\ote$ is the most diffusive and fails to capture small-scale structures effectively. Whereas the higher-order schemes $\othe$ and $\ofe$ are much more accurate and able to capture the small-scale features. To demonstrate this, we have also plotted the zoomed-in plots of the results in Figure~\eqref{fig:RM_zoom}, where we have zoomed at the center of the domain. We clearly see that $\othe$ and $\ofe$ schemes produce lot more details of small-scale structures than the $\ote$ scheme. The results are also similar to those presented in~\cite{cheng2024high}. In Figure~\eqref{fig:RM_entropy}, we have plotted the total entropy change with time. The plot clearly demonstrates the entropy stability of the numerical schemes.

\section{Conclusion}
\label{sec:conc}
In this work, we have designed higher-order entropy-stable finite difference schemes for the One-Fluid Two-Temperature Euler Non-equilibrium Hydrodynamics system, which are a set of hyperbolic PDEs with non-conservative terms. We show that the presence of the non-conservative terms results in non-symmetrizability of the systems. To design entropy-stable numerical schemes, we present a novel reformulation of the equations so that the conservative part is symmetrizable and the non-conservative part does not contribute to the entropy evolution. Finally, an entropy-stable discretization for the new conservative part is proposed, which results in the entropy stability of the complete discretization. We have presented extensive numerical results to demonstrate the accuracy and entropy stability of the proposed numerical schemes.


\section*{Acknowledgements}
Harish Kumar acknowledge support from a Vajra award (VJR/2018/00129).

\printcredits

\section*{Declarations}
\textbf{Conflict of interest} The authors declare that they have no Conflict of interest.

\section*{Data availibility}
Data will be made available on request.

\bibliographystyle{unsrt}

\bibliography{cas-refs}

\appendix
\section{Entropy invariance of non-conservative terms in reformulated OFTT-Euler system}
\label{Non_con_entropy_evolution}
\begin{lemma}
	The non-conservative terms of the reformulated system~\eqref{eq:oftt_ref_noncons} do not contribute to the entropy evolution, i.e., the coefficient matrices $\bc_x(\con)$ and $\bc_y(\con)$ satisfy,
	\begin{equation*}
		\evar^\top \bc_x(\con) = \evar^\top \bc_y(\con) = 0.
	\end{equation*}
\end{lemma}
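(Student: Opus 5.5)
The plan is to avoid multiplying $\evar^\top$ against the explicit $5\times 5$ matrices column by column. Instead I would work with the non-conservative terms in the form in which they were defined in \eqref{eq:tt_euler_reformulate}. By construction, $\bc_x(\con)\,\p_x\con$ is minus the blue right-hand side of \eqref{eq:tt_euler_reformulate} in the $x$-direction:
\[
\bc_x(\con)\frac{\p \con}{\p x} = -\Big(0,\ \p_x(\pe-\pii),\ 0,\ \p_x\big((\pe-\pii)v_x\big),\ -(\gamma_i-1)\pii\,\p_x v_x\Big)^\top .
\]
Every entry is a derivative of a function of $\con$ alone (namely $\pe$, $\pii$, $v_x$, all smooth on $\Omega$). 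By the chain rule, $\p_x q(\con)=q'(\con)\,\p_x\con$, so each entry is a linear form in the vector $\p_x\con$. Hence the identity $\bc_x(\con)\,\p_x\con = -(\dots)$ holds with $\p_x\con$ replaced by an arbitrary vector $\mathbf{w}\in\mathbb{R}^5$. This is exactly how the matrix $\bc_x$ was obtained.

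The key step is then to contract with the entropy variables \eqref{eq:envar}. Only three components matter: $\evar_2=\beta_e v_x$, $\evar_4=-\beta_e$ and $\evar_5=(\beta_e-\beta_i)/(\gamma_i-1)$. I would show that for any admissible smooth state
\[
\beta_e v_x\,\p_x(\pe-\pii) - \beta_e\,\p_x\big((\pe-\pii)v_x\big) - (\beta_e-\beta_i)\pii\,\p_x v_x = 0 .
\]
Expanding the product rule, the terms $\beta_e v_x\p_x(\pe-\pii)$ cancel. What remains is $-\beta_e(\pe-\pii)\p_x v_x-\beta_e\pii\p_x v_x+\beta_i\pii\p_x v_x = (-\beta_e\pe+\beta_i\pii)\,\p_x v_x$. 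Using $\beta_e\pe=\beta_i\pii=\rho$, this equals $(-\rho+\rho)\p_x v_x=0$. By the linearity observation, this yields $\evar^\top\bc_x(\con)\mathbf{w}=0$ for all $\mathbf{w}$, that is, $\evar^\top\bc_x(\con)=0$.

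The $y$-direction is identical. The non-conservative vector for $\bc_y$ has $\p_y(\pe-\pii)$ in the third (momentum-$y$) slot, paired with $\evar_3=\beta_e v_y$, and $v_y$ replaces $v_x$ elsewhere. The same cancellation gives $\evar^\top\bc_y(\con)=0$.

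The main obstacle is justifying that the displayed matrices $\bc_x,\bc_y$ really represent these non-conservative products, since entries such as the $(5,\cdot)$ row must match $(\gamma_i-1)\pii\,\p_x v_x$ expressed through $\p_x\con$. If one insists on the explicit matrices, I would check one column directly as a sanity test. For example, in the last column, $\evar_2\big(\tfrac{\gamma_e-1}{\gamma_i-1}+1\big)+\evar_4\big(\tfrac{\gamma_e-1}{\gamma_i-1}+1\big)v_x=0$ holds trivially. I would then rely on the chain-rule argument above for the full statement, which avoids the column-by-column algebra entirely.
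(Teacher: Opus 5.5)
Your proof is correct, but it is organized differently from the paper's. The paper works directly with the displayed matrix. It computes $\evar^\top C_k$ for each column $C_k$ of $\bc_x$: columns 1 and 2 reduce to $\beta_e\pe=\beta_i\pii=\rho$, and columns 3--5 cancel between the second and fourth entries. It then says the $y$-case is similar. You instead contract $\evar$ with the non-conservative vector before expanding in $\p_x\con$. One product-rule computation then reduces everything to $(\beta_i\pii-\beta_e\pe)\,\p_x v_x=0$. The paper's five checks are just your identity $\evar^\top\bc_x(\con)\mathbf{w}=0$ evaluated at the coordinate vectors $\mathbf{w}=e_1,\dots,e_5$. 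Your route shows why the split into $2\pe$ and $\pe-\pii$ works, and it does not depend on how $\bc_x$ is written out. The paper's route certifies the displayed entries themselves, which is literally what the lemma asserts. The identification you flag as the main obstacle does hold and is quick to write down. Use $\p\pe/\p\con=(\gamma_e-1)\bigl(\tfrac{|\bu|^2}{2},-v_x,-v_y,1,-\tfrac{1}{\gamma_i-1}\bigr)$ and $\p v_x/\p\con=(-v_x/\rho,1/\rho,0,0,0)$. Then row 2 of $\bc_x$ is $-\p(\pe-\pii)/\p\con$, row 4 is $v_x$ times row 2 minus $(\pe-\pii)\,\p v_x/\p\con$, and row 5 is $(\gamma_i-1)\pii\,\p v_x/\p\con$. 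These match the paper entry by entry. The same holds for $\bc_y$, with the pressure row in the third slot and $v_x$ replaced by $v_y$. Including these lines makes your argument a complete proof for the matrices as stated, and the single-column sanity check becomes unnecessary.
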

\begin{proof}
	We compute $\evar^\top\cdot C_i$ for $i=\{1,2,3,4,5\}$, where $C_i$ are the columns of the matrix $\bc_x(\con)$. We get,
	\begin{align*}
		\evar^\top\cdot C_1&= \evar^\top\cdot \left\{0,-\frac{(\gamma_e-1)|\bu|^2}{2}, 0, -\left(\frac{(\gamma_e-1)|\bu|^2}{2} + \frac{\pii-\pe}{\rho}\right)v_x, -\frac{(\gamma_i-1)\pii v_x}{\rho}\right\} \\
		&=-\textcolor{black}{\frac{(\gamma_e-1)|\bu|^2}{2}\left(\beta_e v_x\right)} -\left(\textcolor{black}{\frac{(\gamma_e-1)|\bu|^2}{2}} + \textcolor{black}{\frac{\pii-\pe}{\rho}}\right)v_x\left(-\beta_e\right) - \textcolor{black}{\frac{(\gamma_i-1)\pii v_x}{\rho}\left(\frac{\beta_e-\beta_i}{\gamma_i-1}\right)}\\
		&=0,\\
		\evar^\top\cdot C_2&= \evar^\top\cdot \left\{0, (\gamma_e-1)v_x, 0, (\gamma_e-1) v_x^2 + \frac{\pii-\pe}{\rho}, \frac{(\gamma_i-1)\pii}{\rho}\right\} \\
		&=\textcolor{black}{(\gamma_e-1)v_x\left(\beta_e v_x\right)} +\left(\textcolor{black}{(\gamma_e-1) v_x^2} + \textcolor{black}{\frac{\pii-\pe}{\rho}}\right)\left(-\beta_e\right) + \textcolor{black}{\frac{(\gamma_i-1)\pii}{\rho}\left(\frac{\beta_e-\beta_i}{\gamma_i-1}\right)} \\
		&=0,\\
		&\\
		\evar^\top\cdot C_3&= \evar^\top\cdot \left\{0,(\gamma_e-1)v_y, 0, (\gamma_e-1)v_x v_y, 0\right\} = 0,\\
		\evar^\top\cdot C_4&= \evar^\top\cdot \left\{0,-(\gamma_e-1), 0, -(\gamma_e-1)v_x, 0\right\} = 0,\\
		\text{and}&\\
		\evar^\top\cdot C_5&= \evar^\top\cdot \left\{0,\left(\frac{\gamma_e-1}{\gamma_i-1} + 1\right), 0, \left(\frac{\gamma_e-1}{\gamma_i-1} + 1\right)v_x, 0\right\} = 0.
	\end{align*}
	Hence  $\evar^\top \bc_x(\con) = 0$. Similarly, we can easily prove that $\evar^\top \bc_y(\con) = 0$.
\end{proof}  

\section{Non-symmetrizability of OFTT-Euler system}
\label{Non_Symmetrizability}
Here, we investigate symmetrization of the OFTT-Euler system. Let us recall,
\begin{definition}
	The conservation law 
	\begin{equation}
		\frac{\p \con}{\p t} + \frac{\p \f_x}{\p x}=0,
		\label{eq:1d_con}
	\end{equation}
	is said to be symmetrizable if there exists a change of variable $\con \rightarrow \evar$ which symmetrizes it, i.e., \eqref{eq:1d_con} with a change of variable can be written as,
	$$
	\frac{\p \con}{\p \evar}\frac{\p \evar}{\p {t}}+\frac{\p \textbf{f}_x}{\p \con}\frac{\p \con}{\p \evar}\frac{\p \evar}{\p {x}}=0.
	$$
	where $\frac{\p \con}{\p \evar}$ is a symmetric positive definite matrix.  Furthermore, $\frac{\p \textbf{f}_x}{\p \con}  \frac{\p \con}{\p \evar}$ is symmetric matrices.
\end{definition}
We also have the following results from~\cite{godlewski2013numerical}:
\begin{thm}\label{thm_CS}
	The conservative system have a strictly convex entropy $\ent(\con)$ if and only if the system is symmetrizable. 
\end{thm}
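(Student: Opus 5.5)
We prove both implications through the Legendre duality between the convex entropy $\ent(\con)$ and the entropy variables $\evar = \ent'(\con)$. The same structure was used for the reformulated conservative part in \eqref{eq:ent_def}. Throughout we work on a state domain that is convex (or at least simply connected), so that closed gradient fields are exact.

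\emph{Entropy $\Rightarrow$ symmetrizable.} Assume $\ent$ is strictly convex with entropy flux $\entf_x$ satisfying $\entf_x'(\con)=\evar^\top \f_x'(\con)$, where $\evar=\ent'(\con)$.
\begin{enumerate}
\item Since $\p\evar/\p\con=\ent''(\con)$ is symmetric positive definite, the map $\con\mapsto\evar$ is a local diffeomorphism. Strict convexity makes $\ent'$ injective (strict monotonicity of a gradient map), so it is globally invertible onto its image. The matrix $\p\con/\p\evar=(\ent'')^{-1}$ is then symmetric positive definite.
\item Introduce the potentials $\phi(\evar)=\evar\cdot\con-\ent$ and $\psi_x(\evar)=\evar\cdot\f_x-\entf_x$, in analogy with $\mathcal{F}_d$ in \eqref{entropy_potential}.
\item Differentiate by the chain rule. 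This gives $\nabla_{\evar}\phi=\con$, because $\ent'=\evar^\top$ cancels the extra terms. It also gives $\nabla_{\evar}\psi_x=\f_x$, because $\entf_x'=\evar^\top\f_x'$ cancels the terms $(\p\con/\p\evar)^\top(\f_x'^\top\evar-\entf_x'^\top)$.
\item Consequently $\frac{\p\f_x}{\p\con}\frac{\p\con}{\p\evar}=\frac{\p\f_x}{\p\evar}=\nabla^2_{\evar}\psi_x$ is symmetric. This is exactly the symmetrization required in the definition above.
\end{enumerate}

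\emph{Symmetrizable $\Rightarrow$ entropy.} Suppose some change of variables $\con\mapsto\evar$ makes $A_0=\p\con/\p\evar$ symmetric positive definite and $\f_x'A_0=\p\f_x/\p\evar$ symmetric.
\begin{enumerate}
\item By symmetry of these Jacobians, the fields $\con(\evar)$ and $\f_x(\evar)$ are closed. On a simply connected domain they are therefore gradients: $\con=\nabla\phi$ and $\f_x=\nabla\psi_x$. Here $\phi$ is strictly convex, since $\nabla^2\phi=A_0$.
\item Define $\ent(\con)=\evar\cdot\con-\phi(\evar)$, the Legendre transform, and $\entf_x=\evar\cdot\f_x-\psi_x$.
\item Direct differentiation gives $\ent'(\con)=\evar^\top$, hence $\ent''=A_0^{-1}$ is positive definite and $\ent$ is strictly convex.
\item Similarly, $\frac{d\entf_x}{d\evar}=\evar^\top\frac{\p\f_x}{\p\evar}$, and multiplying by $A_0^{-1}$ yields $\entf_x'(\con)=\evar^\top\f_x'(\con)$. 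Thus $(\ent,\entf_x)$ is an entropy pair.
\end{enumerate}

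The computations are routine chain-rule identities. The only delicate step, which I expect to be the main obstacle, is global: passing from symmetric Jacobians to the existence of the potentials $\phi,\psi_x$, and ensuring global invertibility of $\con\mapsto\evar$. I would handle this by assuming a convex state domain, so that the Poincaré lemma applies on the image in $\evar$-space. I would also use strict monotonicity of gradients of strictly convex functions to obtain injectivity. This is precisely the setting of~\cite{godlewski2013numerical}, from which the result is quoted.
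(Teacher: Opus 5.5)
Your proof is correct. The paper gives no argument of its own and simply quotes the result from Godlewski--Raviart, and your Legendre-duality construction is precisely the classical Godunov--Mock proof given in that reference: the potentials $\phi(\evar)=\evar\cdot\con-\ent$ and $\psi_x(\evar)=\evar\cdot\f_x-\entf_x$ in one direction, and the Legendre transform of the potentials of the closed fields $\con(\evar)$, $\f_x(\evar)$ in the other. The domain hypotheses you make explicit (convexity for injectivity of $\ent'$, simple connectivity of the $\evar$-image for the existence of $\phi,\psi_x$) are exactly what the quoted statement leaves implicit, and they hold here because $\Omega$ in \eqref{eq:tt_domain} is convex in the conservative variables.
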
 
\begin{remark}
	The conservative part of the OFTT-Euler system is symmetrizable because it satisfies,
	\begin{align*}
		\frac{\p \textbf{f}_x}{\p \evar}=\bigg(\frac{\p \textbf{f}_x}{\p \evar}\bigg)^{\top}.
	\end{align*}
\end{remark}
We now consider the complete OFTT-Euler system in one dimension, i.e.
\begin{equation}\label{eq:1d_noncon}
	\frac{\p \con}{\p t}+\frac{\p \f_{x}}{\p x} + \bc_{x}(\con)\frac{\p \con}{\p x} =0.
\end{equation}
where $\con,~\f_{x}$ and $\bc_{x}(\con)$ are defined in \eqref{sec:reformulation}. Following~\cite{yadav2023entropy}, to prove the symmetrizability of the system, we need to prove that the matrix
$$
\mathcal{H}(\con) = \left(\frac{\p \textbf{f}_x}{\p \con} +\bc_{x}(\con) \right)\frac{\p \con}{\p \evar},
$$ 
is symmetric. However, a lengthy calculation shows that 
\begin{align*}
\mathcal{H}(\con) -\mathcal{H}(\con)^\top=	\begin{pmatrix}
		0 & \frac{\pe-\pii}{2} & 0 & \frac{(\pe-\pii)v_x}{2} & 0\\
		\frac{\pii-\pe}{2} & 0 & \frac{(\pii-\pe)v_y}{2} & \Gamma & -\frac{(\pe-\pii)\pii(-1+2\gamma_i)}{2\rho}\\
		0 & \frac{(\pe-\pii)v_y}{2} & 0 & \frac{(\pe-\pii)v_x v_y}{2} & 0\\
		\frac{(\pii-\pe)v_x}{2} & -\Gamma & \frac{(\pii-\pe)v_x v_y}{2} & 0 & -\frac{(\pe-\pii)\pii v_x(-1+2\gamma_i)}{2\rho}\\
		0 & \frac{(\pe-\pii)\pii(-1+2\gamma_i)}{2\rho} & 0 & \frac{(\pe-\pii)\pii v_x(-1+2\gamma_i)}{2\rho} & 0
	\end{pmatrix}.
\end{align*}
with $\Gamma=\frac{(\pe-\pii)\left(-2\pe\left(\gamma_i-1\right) + \left(\gamma_e-1\right) \left(2\pii\left(1-2\gamma_i\right) + \rho\left(v_x^2-v_y^2\right)\left(\gamma_i-1\right) \right) \right)}{4\rho\left(\gamma_e-1\right)\left(\gamma_i-1\right)}$. Which is not zero. Hence, the system in not symmetriable. However, we note that when $\pe=\pii$ the matrix $\mathcal{H}(\con)$ is symmetric, which is expected as in that case, the OFFT-Euler system reduces to compressible Euler equations.

\section{Entropy-scaled right eigenvectors for conservative part of the reformulated OFTT-Euler system}\label{Eiegn_vector_for_con}
We consider the conservative part of the OFTT-Euler system
\begin{equation}
	\df{\con}{t} + \sum_{d\in\left\{x,y\right\}}\frac{\p \f_d}{\p \con}\df{\con}{d} = 0.\label{eq:jacobi}
\end{equation}
Let us consider the primitive variables $\textbf{W} = \{\rho,v_x,v_y,\pe,\pii\}$. For system~\eqref{eq:jacobi}, Section~\eqref{RE} describes the right eigenvectors with respect to the primitive variables, while Section~\eqref{ESRE} shows the expressions for the scaled eigenvectors with respect to the primitive variables.
\subsection{Right eigenvectors}\label{RE}
To compute eigenstructure, we rewrite \eqref{eq:jacobi} in terms of primitive variables $\mathbf{W}$. The eigenvalues $\tilde{\mathbf{\Lambda}}_d$ of the Jacobian $\frac{\p \f_d}{\p \con}$ are then
\begin{equation*}
	\tilde{\mathbf{\Lambda}}_d=(v_d,~ v_d,~ v_d, ~v_d\pm c_f),~~~~~~\text{where}~c_f=\sqrt{\frac{2\pe(2\gamma_e-1)}{\rho}}
	\label{eq:eigenvalues}
\end{equation*}
The corresponding right eigenvectors corresponding to eigenvalues $v_x,~ v_x,~ v_x, ~v_x\pm c_f$ are given below,
\begin{align*}
	\begin{pmatrix}
		1\\
		0\\
		0\\
		0\\
		0 
	\end{pmatrix},~\begin{pmatrix}
		0\\
		0\\
		1\\
		0\\
		0 
	\end{pmatrix},~\begin{pmatrix}
		0\\
		0\\
		0\\
		0\\
		1 
	\end{pmatrix},~\begin{pmatrix}
		\rho^2\\
		\pm\sqrt{2\pe(2\gamma_e-1)\rho}\\
		0\\
		\pe(2\gamma_e-1)\rho\\
		\pii\rho 
	\end{pmatrix}.
\end{align*}
Similarly, the corresponding right eigenvectors corresponding to eigenvalues $v_y,~ v_y,~ v_y, ~v_y\pm c_f$ are given below, 
\begin{align*}
	\begin{pmatrix}
		1\\
		0\\
		0\\
		0\\
		0 
	\end{pmatrix},~\begin{pmatrix}
		0\\
		1\\
		0\\
		0\\
		0 
	\end{pmatrix},~\begin{pmatrix}
		0\\
		0\\
		0\\
		0\\
		1 
	\end{pmatrix},~\begin{pmatrix}
		\rho^2\\
		0\\
		\pm\sqrt{2\pe(2\gamma_e-1)\rho}\\
		\pe(2\gamma_e-1)\rho\\
		\pii\rho 
	\end{pmatrix}.
\end{align*}
Under the assumption $\gamma_e>1$, $\gamma_i>1$ and equation~\eqref{eq:tt_domain}, all eigenvalues are real, and the set of eigenvectors in both directions is linearly independent. 
\subsection{Entropy scaled right eigenvectors via Barth scaling process}\label{Barth}
Here, we will compute the entropy-scaled right eigenvectors in $x$-direction using the Barth scaling procedure~\cite{barth1999numerical}. The right eigenvectors of the Jacobian matrix $\frac{\p \f_x}{\p \con}$ for the system~\eqref{eq:jacobi} are described in~\ref{RE} in terms of primitive variables.  Next, we define
\begin{equation*}
	R^x = \frac{\p \con}{\p \textbf{W}} R^x_{\textbf{W}},
\end{equation*}
where $R^x$ is the right eigenvector matrix for the Jacobian matrix $\frac{\p \f_x}{\p \con}$ in terms of conservative variables and $\frac{\p \con}{\p \textbf{W}}$ is the Jacobian matrix for the change of variables, which is given below
\begin{align*}
	\frac{\p \con}{\p \textbf{W}}=
	\begin{pmatrix}
		1 & 0 & 0 & 0 & 0 \\
		v_x & \rho & 0 & 0 & 0 \\
		v_y & 0 & \rho & 0 & 0 \\
		\frac{|\bu|^2}{2} & \rho v_x & \rho v_y & \frac{1}{\gamma_e-1} & \frac{1}{\gamma_i-1} \\
		0 & 0 & 0 & 0 & 1  
	\end{pmatrix}.
\end{align*}
The matrix $R_{\textbf{W}}^x$ is the right eigenvector matrix for the Jacobian matrix $\frac{\p \f_x}{\p \con}$ in terms of the primitive variable (see~\ref{RE}), which is given below,
\begin{align*}
	\begin{pmatrix}
		\rho^2 & 1 & 0 & 0 & \rho^2\\
		-\sqrt{2\pe(2\gamma_e-1)\rho} & 0 & 0 & 0 & \sqrt{2\pe(2\gamma_e-1)\rho}\\
		0 & 0 & 1 & 0 & 0\\
		\pe(2\gamma_e-1)\rho & 0 & 0 & 0 & \pe(2\gamma_e-1)\rho\\
		\pii\rho & 0 & 0 & 1 & \pii\rho
	\end{pmatrix}.
\end{align*}
We want to determine the scaling matrix $T^x$ that will allow the scaled right-eigenvector matrix $\tilde{R}^x=R^x T^x$ to satisfy
\begin{equation}
	\frac{\p \con}{\p \evar} = \tilde{R^x} \tilde{R^x}^{\top},
\end{equation}
where the entropy variable $\evar$ is defined in~\eqref{eq:envar}. Now we follow~\cite{barth1999numerical} and define the matrix 
\begin{equation*}
	\mathcal{Y}^x = (R^x_{\textbf{W}})^{-1} \frac{\p \textbf{W}}{\p \evar}\left(\frac{\p \con}{\p \textbf{W}}\right)^{-\top}(R^x_{\textbf{W}})^{-\top}.
\end{equation*}
This results in,
\begin{align*}
	\mathcal{Y}^x = \begin{pmatrix}
		\frac{1}{4\rho^3\left(2\gamma_e-1\right)} & 0 & 0 & 0 & 0 \\
		0 & \frac{\rho\left(\gamma_e-1\right)}{\left(2\gamma_e-1\right)} & 0 & \frac{\pii\left(\gamma_e-1\right)}{\left(2\gamma_e-1\right)} & 0 \\
		0 & 0 & \frac{\pe}{\rho^2} & 0 & 0 \\
		0 & \frac{\pii\left(\gamma_e-1\right)}{\left(2\gamma_e-1\right)} & 0 & -\frac{\pii^2\left(\gamma_e+\gamma_i-2\gamma_e\gamma_i\right)}{\rho\left(2\gamma_e-1\right)} & 0 \\
		0 & 0 & 0 & 0 & \frac{1}{4\rho^3\left(2\gamma_e-1\right)} 
	\end{pmatrix}.
\end{align*}
Then the scaling matrix $T^x$ is the square root of $\mathcal{Y}^x,$ which is given by,
\begin{align*}
	\begin{pmatrix}
		\frac{1}{\sqrt{4\rho^3\left(2\gamma_e-1\right)}} & 0 & 0 & 0 & 0 \\
		0 & \frac{\pii\Theta_1+\rho\left(\gamma_e-1\right)}{\left(2\gamma_e-1\right)\Theta} & 0 & \frac{\pii\left(\gamma_e-1\right)}{\left(2\gamma_e-1\right)\Theta} & 0 \\
		0 & 0 & \frac{\sqrt{\pe}}{\rho} & 0 & 0 \\
		0 & \frac{\pii\left(\gamma_e-1\right)}{\left(2\gamma_e-1\right)\Theta} & 0 & \frac{\pii^2\left(-\gamma_i+\gamma_e\left(2\gamma_i-1\right)\right) + \pii\rho\Theta_1}{\rho\left(2\gamma_e-1\right)\Theta} & 0 \\
		0 & 0 & 0 & 0 & \frac{1}{\sqrt{4\rho^3\left(2\gamma_e-1\right)}} 
	\end{pmatrix},
\end{align*}
where, $\Theta_1=\sqrt{\left(\gamma_e-1\right)\left(\gamma_i-1\right)\left(2\gamma_e-1\right)}$ and $\Theta = \sqrt{\frac{2\pii\rho\Theta_1 - \pii^2\left(\gamma_e+\gamma_i-2\gamma_e\gamma_i\right) + \rho^2\left(\gamma_e-1\right)}{\rho\left(2\gamma_e-1\right)}}$. 

Similarly,  we can obtain the scaling matrix $T^y$, which turns out to be same as the matrix $T^x$.
\subsection{Entropy scaled right eigenvectors in $x$ and $y$ direction}\label{ESRE}
Finally, we provide the complete expressions for the entropy-scaled eigenvectors. In the $x$-direction, the entropy-scaled right eigenvectors matrix in terms of primitive variables is 
\begin{align*}
	\begin{pmatrix}
		\sqrt{\frac{\rho}{4\left(2\gamma_e-1\right)}} & \frac{\pii\Theta_1+\rho\left(\gamma_e-1\right)}{\left(2\gamma_e-1\right)\Theta} & 0 & \frac{\pii\left(\gamma_e-1\right)}{\left(2\gamma_e-1\right)\Theta} & \sqrt{\frac{\rho}{4\left(2\gamma_e-1\right)}} \\
		-\frac{1}{\rho}\sqrt{\frac{\pe}{2}} & 0 & 0 & 0 & \frac{1}{\rho}\sqrt{\frac{\pe}{2}} \\
		0 & 0 & \frac{\sqrt{\pe}}{\rho} & 0 & 0 \\
		\frac{\pe}{2}\sqrt{\frac{\left(2\gamma_e-1\right)}{\rho}} & 0 & 0 & 0 & \frac{\pe}{2}\sqrt{\frac{\left(2\gamma_e-1\right)}{\rho}} \\
		\frac{\pii}{2}\sqrt{\frac{1}{\left(2\gamma_e-1\right)\rho}} & \frac{\pii\left(\gamma_e-1\right)}{\left(2\gamma_e-1\right)\Theta} & 0 & \frac{\pii^2\left(-\gamma_i+\gamma_e\left(2\gamma_i-1\right)\right) + \pii\rho\Theta_1}{\rho\left(2\gamma_e-1\right)\Theta} & \frac{\pii}{2}\sqrt{\frac{1}{\left(2\gamma_e-1\right)\rho}} 
	\end{pmatrix}.
\end{align*}
Similarly, in the $y$-direction, the entropy-scaled right eigenvectors matrix in terms of primitive variables is 
\begin{align*}
	\begin{pmatrix}
		\sqrt{\frac{\rho}{4\left(2\gamma_e-1\right)}} & \frac{\pii\Theta_1+\rho\left(\gamma_e-1\right)}{\left(2\gamma_e-1\right)\Theta} & 0 & \frac{\pii\left(\gamma_e-1\right)}{\left(2\gamma_e-1\right)\Theta} & \sqrt{\frac{\rho}{4\left(2\gamma_e-1\right)}} \\
		0 & 0 & \frac{\sqrt{\pe}}{\rho} & 0 & 0 \\
		-\frac{1}{\rho}\sqrt{\frac{\pe}{2}} & 0 & 0 & 0 & \frac{1}{\rho}\sqrt{\frac{\pe}{2}} \\
		\frac{\pe}{2}\sqrt{\frac{\left(2\gamma_e-1\right)}{\rho}} & 0 & 0 & 0 & \frac{\pe}{2}\sqrt{\frac{\left(2\gamma_e-1\right)}{\rho}} \\
		\frac{\pii}{2}\sqrt{\frac{1}{\left(2\gamma_e-1\right)\rho}} & \frac{\pii\left(\gamma_e-1\right)}{\left(2\gamma_e-1\right)\Theta} & 0 & \frac{\pii^2\left(-\gamma_i+\gamma_e\left(2\gamma_i-1\right)\right) + \pii\rho\Theta_1}{\rho\left(2\gamma_e-1\right)\Theta} & \frac{\pii}{2}\sqrt{\frac{1}{\left(2\gamma_e-1\right)\rho}} 
	\end{pmatrix}.
\end{align*}
Where, $\Theta_1=\sqrt{\left(\gamma_e-1\right)\left(\gamma_i-1\right)\left(2\gamma_e-1\right)}$ and $\Theta = \sqrt{\frac{2\pii\rho\Theta_1 - \pii^2\left(\gamma_e+\gamma_i-2\gamma_e\gamma_i\right) + \rho^2\left(\gamma_e-1\right)}{\rho\left(2\gamma_e-1\right)}}$.

\end{document}